\DeclareMathAlphabet{\mathcalligra}{T1}{calligra}{m}{n}
\def\csname ver@etex.sty\endcsname{3000/12/31}  
\newcommand{\Rmnum}[1]{\expandafter\@slowromancap\romannumeral #1@}
\newcommand{\To}[1]{\rule{0pt}{#1ex}}           
\newcommand{\Bo}[1]{\rule[-#1ex]{0pt}{0pt}}     
\newtheorem{theorem}{Theorem}[section]
\newtheorem{proposition}[theorem]{Proposition}
\newtheorem{corollary}[theorem]{Corollary}
\newtheorem{remark}[theorem]{Remark}
\newtheorem{definition}[theorem]{Definition}
\def\@tocline#1#2#3#4#5#6#7{%
  \ifnum #1>\c@tocdepth
  \else
    \par\addpenalty\@secpenalty\addvspace{#2}%
    \begingroup
      \hyphenpenalty\@M
      \@ifempty{#4}{%
        \@tempdima\csname r@tocindent\number#1\endcsname\relax
      }{%
        \@tempdima#4\relax
      }%
      \parindent\z@%
      \leftskip#3\relax
      \advance\leftskip\@tempdima\relax
      \rightskip\@pnumwidth plus4em
      \parfillskip -\@pnumwidth
      #5\leavevmode\hskip -\@tempdima
      \ifcase #1\or\or \hskip 1em \or \hskip 2em \else \hskip 3em \fi%
      #6\nobreak\relax
      \dotfill\hbox to\@pnumwidth{\@tocpagenum{#7}}\par
      \nobreak
    \endgroup
  \fi
}
\title{Defining equations of $7$-dimensional model CR hypersurfaces}
\author{Jan Gregorovi\v{c}}
\address{Jan Gregorovi\v{c},
	Department of Mathematics and Statistics 
	Masaryk University,
	Kotl\'{a}\v{r}-sk\'{a} 2,
	611 37 Brno,
	Czech Republic, Department of Mathematics, Faculty of Science, University of Ostrava, 701 03 Ostrava, Czech Republic, and Institute of Discrete Mathematics and Geometry, TU Vienna, Wiedner Hauptstrasse 8-10/104, 1040 Vienna, Austria. ORCID ID: 0000-0002-0715-7911}\email{ jan.gregorovic@seznam.cz}
 \author{David Sykes}
\address{David Sykes,
	Department of Mathematics and Statistics, 
	Masaryk University,
	Kotl\'{a}\v{r}sk\'{a} 2,
	611 37 Brno,
	Czech Republic. ORCID ID: 0000-0002-6928-3753}\email{ sykes@math.muni.cz}
\subjclass[2020]{32V05, 32V40, 53C30}
\keywords{$2$-nondegenerate CR hypersurfaces, homogeneous manifolds, defining equations, complete normal forms}
\thanks{The authors were supported by the GACR grant GA21-09220S. J.G. was also supported by Austrian Science Fund (FWF): P34369}
\begin{document}
\begin{abstract}
We study CR hypersurfaces in $\mathbb{C}^4$ that are Levi degenerate with constant rank Levi form, and moreover finitely nondegenerate. Each of these can be described as a deformation of a model CR hypersurface by adding terms of higher natural weighted order to the model's defining equation. We obtain a complete normal form for models of real analytic uniformly $2$-nondegenerate CR hypersurfaces in $\mathbb{C}^4$, and present a detailed study of their local invariants. The normal form illustrates that $2$-nondegenerate models in $\mathbb{C}^4$ comprise a moduli space parameterized by two univariate holomorphic functions, which is in sharp contrast to the well known Levi-nondegenerate setting and the more recently discovered behavior of $2$-nondegenerate structures in $\mathbb{C}^3$. In further contrast to these previously studied settings, we demonstrate that not all $2$-nondegenerate structures in $\mathbb{C}^4$ arise as perturbations of homogeneous models. We derive defining equations for the homogeneous $2$-nondegenerate models, a set of $9$ structures, and find explicit formulas for their infinitesimal symmetries.
\end{abstract}
\maketitle
\tableofcontents
\section{Introduction}

At each point of a real analytic hypersurface-type CR manifold $M$ with Levi form of constant rank, there is a canonically associated CR hypersurface germ distinguished by admitting a natural weighted homogeneous defining equation described in \cite[Theorem 1.1]{GKS2024}. Our aim is to find a complete normal form for these associated CR hypersurfaces in $\mathbb{C}^4$, solving their local equivalence problem. We achieve this classification for the structures associated with CR hypersurfaces on which all integer-valued invariants of \cite[Section 11.1]{baouendi1999real} are constant. This a generic property that is equivalent to \emph{uniform $k$-nondegeneracy} of \cite[Definition 2.1]{GKS2024} for finitely nondegenerate structures, while refining the property of being \emph{everywhere holomorphically degenerate} for holomorphically degenerate structures.

In the Levi nondegenerate case, the associated CR hypersurface germs are locally equivalent to the usual quadric models 
$$
\Re(w)=\vert z_1\vert^2+\vert z_2\vert^2\pm\vert z_3\vert^2
$$
at $0$ in coordinates $(w,z_1,z_2,z_3)\in\mathbb{C}^4$.

The uniform $k$-nondegenerate hypersurfaces in $\mathbb{C}^4$ with Levi form of constant rank $1$ are necessarily $3$-nondegenerate and the associated CR hypersurface germs are locally equivalent to
\begin{equation}\label{eqn: lightconemodel}
   \Re(w)=\frac{|z_1|^2+\Re(\zeta\overline{z_1}^2)}{1-|\zeta|^2} 
\end{equation}
at $0$ in coordinates $(w,z_1,z_2,\zeta)\in\mathbb{C}^4$, which coincides with the (locally) unique $2$-nondegenerate model in $\{z_2=0\}\cong\mathbb{C}^3.$

For the uniformly holomorphically degenerate hypersurfaces in $\mathbb{C}^4$, the associated CR hypersurface germs are locally equivalent to one of
$$
\Re(w)=|z_1|^2\pm|z_2|^2,\quad \Re(w)=\frac{|z_1|^2+\Re(\zeta\overline{z_1}^2)}{1-|\zeta|^2},\quad 
\Re(w)=|z_1|^2,
\quad\mbox{ and }\quad
\Re(w) = 0
$$
at $0$ in coordinates $(w,z_1,z_2,\zeta)\in\mathbb{C}^4$.

 While the above observations, covering all cases with Levi form rank different from $2$, are straightforward consequences of  \cite[Theorem 1.1]{GKS2024}, the remaining class of rank $2$ Levi form structures is much more intricate. Uniform $k$-nondegenerate hypersurfaces in $\mathbb{C}^4$ with Levi form of constant rank $2$ are necessarily $2$-nondegenerate and we use the terminology \emph{$2$-nondegenerate models} for their associated CR hypersurface germs. The classification of $2$-nondegenerate models is a nontrivial task, because we know from \cite[Theorem 1.2]{GKS2024} that the moduli space of the $2$-nondegenerate models has functional freedom. To achieve the complete normal form of $2$-nondegenerate models in $\mathbb{C}^4$, one needs to study their local invariants in considerable detail. In Proposition \ref{degree 0 symmetry bound}, we classify the values of their \emph{modified symbol} invariants, which are important invariants as they completely determine the $2$-nondegenerate model in the cases where the model is homogeneous \cite[Lemma 3.3]{GKS2024}. This also allows us to produce the defining equations for all homogeneous $2$-nondegenerate models in $\mathbb{C}^4$ (Table \ref{table of defining equations}).

Starting with a real analytic hypersurface-type CR manifold $M$ in $\mathbb{C}^4$ with Levi form of constant rank $2$  that is $2$-nondegenerate at $x\in M$, we show in \cite[Section 6]{GKS2024} that there are holomorphic coordinates $(w,z,\zeta)$ centered at $x$ such that $M$ is given by a defining equation of the form
\begin{align}\label{general expansion}
    \Re(w)&=z^T\mathbf{H}\overline{z}+\Re\left(\overline{z}^T\mathbf{H}^T\mathbf{S}(\zeta)\mathbf{H}\overline{z}\right)+O(|\zeta|^2)+O(3),
\end{align}
where $z=(z_1,z_2)^T\in \mathbb{C}^2$, $\zeta\in\mathbb{C}$, $\mathbf{H}$ is a $2\times 2$ Hermitian matrix, $\mathbf{S}(\zeta)$ is a symmetric matrix of holomorphic functions in $\zeta$ (defined in a neighborhood of $0$), $O(|\zeta|^2)$ are terms divisible by $|\zeta|^2$, and $O(3)$ are terms of weighted degree greater that $2$ for the weights 
\begin{align}\label{grading convention}
  \mathrm{wt}(w)=2, \quad \mathrm{wt}(z)=1,\quad \mathrm{wt}(\zeta)=0 .
\end{align}
 The matrix $\mathbf{H}$ represents the nondegenerate part of the Levi form at the origin, $\mathbf{S}_\zeta(0):=\frac{\partial}{\partial \zeta} \mathbf{S}(0)\neq 0$ is the $2$-nondegeneracy condition, and the $2$-nondegenerate model is completely determined by the pair ($\mathbf{H}$,$\mathbf{S}(\zeta)$). Moreover, it is shown in \cite[Proposition 6.2]{GKS2024} that the $2$-nondegenerate model is the weighted homogeneous part of \eqref{general expansion} and takes the form
\begin{equation}\label{gen def fun}
\Re(w)=(z_1,z_2)H(\zeta,\overline{\zeta})(\overline{z_1},\overline{z_2})^T+\Re\left((\overline{z_1},\overline{z_2})S(\zeta,\overline{\zeta})(\overline{z_1},\overline{z_2})^T\right),
\end{equation}
where
\begin{align}\label{H from S2 formula}
H(\zeta,\overline{\zeta})=\frac12(\mathbf{H}(\mathrm{Id}-\overline{\mathbf{S}(\zeta)}\mathbf{H}^T\mathbf{S}(\zeta)\mathbf{H})^{-1}+(\mathrm{Id}-\mathbf{H}\overline{\mathbf{S}(\zeta)}\mathbf{H}^T\mathbf{S}(\zeta))^{-1}\mathbf{H})
\end{align}
and 
\begin{align}\label{S from S2 formula}
S(\zeta,\overline{\zeta})=\mathbf{H}^T(\mathrm{Id}-\mathbf{S}(\zeta)\mathbf{H}\overline{\mathbf{S}(\zeta)}\mathbf{H}^T)^{-1}\mathbf{S}(\zeta)\mathbf{H}.
\end{align}

Since each point $x$ of the CR hypersurface $M$ is associated with the germ of a $2$-nondegenerate model of the general form \eqref{gen def fun}, we want to classify the $2$-nondegenerate models in such coordinates up to local equivalence at the origin. Achieving this classification, we find a unique pair ($\mathbf{H}$,$\mathbf{S}(\zeta)$) for each local equivalence class, described across several propositions, which we summarize in the following theorem.

\begin{theorem}\label{main theorem}
    For every germ of a $2$-nondegenerate model hypersurface in $\mathbb{C}^4$, there are local holomorphic coordinates centered at $0$ such that it is defined by \eqref{gen def fun}, \eqref{H from S2 formula}, and \eqref{S from S2 formula} with parameters $\mathbf{H}$ and $\mathbf{S}(\zeta)$ given by one and only one of the formulas presented in Propositions \ref{table4prop3}, \ref{table4prop2}, \ref{table4prop1}, and \ref{table4prop}.
\end{theorem}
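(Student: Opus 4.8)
The statement is a summary of the entire normal-form construction, so proving it amounts to organizing the reduction of the parameter pair $(\mathbf{H},\mathbf{S}(\zeta))$ under the pseudogroup of weight-preserving local biholomorphisms and then verifying that Propositions \ref{table4prop3}, \ref{table4prop2}, \ref{table4prop1}, and \ref{table4prop} cover every equivalence class exactly once. The plan is to first identify this pseudogroup explicitly. Since the weights are $\mathrm{wt}(w)=2$, $\mathrm{wt}(z)=1$, $\mathrm{wt}(\zeta)=0$, the biholomorphisms preserving the homogeneous form \eqref{gen def fun} are generated by a real scaling $w\mapsto r\,w$ (together with the weight-$2$ freedom available in the $w$-coordinate), a $\zeta$-dependent linear substitution $z\mapsto A(\zeta)z$ with $A(\zeta)$ holomorphic and invertible, and a holomorphic reparametrization $\zeta\mapsto\phi(\zeta)$ fixing $0$. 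The first concrete task is to compute how $(\mathbf{H},\mathbf{S}(\zeta))$, equivalently the pair $(H,S)$ of \eqref{H from S2 formula}--\eqref{S from S2 formula}, transforms under each generator, obtaining a congruence-type action on $\mathbf{H}$ coupled to a nontrivial action on the holomorphic symmetric matrix $\mathbf{S}(\zeta)$.

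Second, I would normalize $\mathbf{H}$. Because $\mathbf{H}$ is Hermitian and nondegenerate of rank $2$, the congruence action of the constant part of $A(\zeta)$ together with the $w$-scaling reduces it to a canonical representative distinguished by its signature, giving the two basic branches $\mathbf{H}$ definite and $\mathbf{H}$ indefinite. I would then pass to the stabilizer of the normalized $\mathbf{H}$ inside the pseudogroup and use it, together with the reparametrization freedom $\phi$, to reduce the holomorphic matrix $\mathbf{S}(\zeta)$. The $2$-nondegeneracy condition $\mathbf{S}_\zeta(0)\neq 0$ lets me normalize the leading $\zeta$-jet of $\mathbf{S}$, and $\phi$ lets me fix the parametrization of $\zeta$. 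The case split according to the rank and eigenstructure of this normalized leading term, cross-referenced with the signature of $\mathbf{H}$, is what produces the four propositions, and the classification of modified-symbol values in Proposition \ref{degree 0 symmetry bound} supplies the discrete skeleton pinning down which branches are nonempty.

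The main obstacle is the reduction of $\mathbf{S}(\zeta)$ as a matrix of holomorphic \emph{functions} rather than constants: after the finite-dimensional normalizations are exhausted, the remaining gauge freedom is itself functional (the holomorphic reparametrizations and $\zeta$-dependent frame rotations), and I must show that it suffices to bring $\mathbf{S}(\zeta)$ to a canonical form in which only two univariate holomorphic functions survive as genuine moduli, matching the functional freedom predicted by \cite[Theorem 1.2]{GKS2024}. This requires tracking the full orbit of the infinite-dimensional action and establishing a convergent normalization for the holomorphic entries, which is exactly where the detailed invariant computations carried out in the individual propositions are needed.

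Finally, for the ``one and only one'' assertion I would prove uniqueness by exhibiting invariants that separate the normal forms. The signature of $\mathbf{H}$ and the modified symbol distinguish the four propositions from one another, and within each proposition the two functional moduli, normalized so as to be invariant under the residual stabilizer, distinguish the classes. The delicate endgame is verifying that these residual stabilizers are trivial, so that no two distinct normal-form pairs lie in a common orbit; I expect this to be the step requiring the most care, and it is handled case by case across the four propositions.
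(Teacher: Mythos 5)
Your outline has the right overall shape---normalize the symbol data first, then exhaust the residual freedom on the power series of $\mathbf{S}(\zeta)$, and separate classes by invariants---but it leaves open precisely the step that the paper's argument hinges on: how the infinite-dimensional gauge freedom (the reparametrizations $\zeta\mapsto\phi(\zeta)$ and the $\zeta$-dependent substitutions $z\mapsto A(\zeta)z$) gets killed. You acknowledge this as ``the main obstacle'' and propose to ``track the full orbit of the infinite-dimensional action and establish a convergent normalization,'' deferring the work to the individual propositions; but the propositions never carry out such an infinite-dimensional normalization. Instead, the proof starts from the partial normal form of \cite[Theorem 6.5]{GKS2024}: after the symbol data $\mathbf{H}$, $\mathbf{S}_\zeta(0)$, $\mathbf{S}_{\zeta\zeta}(0)$ are fixed, the \emph{entire} first nonzero entry of $\mathbf{S}(\zeta)$ is normalized to the function $\zeta$ (not just its leading jet), and this single normalization consumes the reparametrization freedom completely---for each residual $(u,U)\in\mathrm{CU}(\mathbf{H})\cap G_{0,0}$ there is a \emph{unique} compensating biholomorphism $g_{(u,U)}$ of $\zeta$ preserving that condition, cf.\ \eqref{symgrouptransform}. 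From that point on the problem is purely finite-dimensional (at most $3$-dimensional plus discrete components), and Subsections \ref{row cases subsection a}--\ref{row cases subsection e} exhaust it term by term in the expansion of $\mathbf{S}(\zeta)$. Without this reduction your plan has no mechanism to rule out that two normal forms differing in their ``two functional moduli'' are still related by some residual holomorphic reparametrization, and a convergence argument for an infinite normalization procedure is exactly the kind of step that can fail.

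Two smaller inaccuracies. First, $\mathbf{H}$ is not normalized by signature alone and then set aside: the representatives of $\mathbf{H}$ (diagonal versus antidiagonal, both occurring in split signature) are chosen jointly with $\mathbf{S}_\zeta(0)$ following the classification of bigraded symbols in Table \ref{normalization conditions Table}, so the case split is by the pair $(\mathbf{H},\mathbf{S}_\zeta(0))$ and then by $\Omega$ via Proposition \ref{degree 0 symmetry bound}, not by the signature of $\mathbf{H}$ followed by the eigenstructure of the leading term. Second, the residual stabilizers are not, and need not be, trivial: the endgame is to show that whatever subgroup of $\mathrm{CU}(\mathbf{H})\cap G_{0,0}$ survives the normalization acts by \emph{symmetries} of the normalized model (these subgroups have dimension $0$, $1$, $2$, or $3$ and are recorded explicitly in Propositions \ref{table4prop3}--\ref{table4prop}), and the normal form is moreover pinned down only up to explicit finite group actions as described in Remark \ref{discrete normalization remark}, not absolutely.
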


We organize the proof into several cases presented in the subsections of Section \ref{A complete normal form}. The proof starts with the partial normal form for $2$-nondegenerate models of \cite[Theorem 6.5]{GKS2024}. We sort the partial normal forms into cases based on a classification of certain geometric CR invariants associated with the $2$-nondegenerate CR hypersurfaces, and then the proof branches according to the values of these invariants. The CR invariants in question are the \emph{bigraded CR symbol} introduced in \cite{porter2021absolute}, the \emph{modified CR symbol} introduced in \cite{sykes2023geometry} and generalized in \cite{GKS2024} and the \emph{obstructions to the first order constancy} (of the bigraded symbol) introduced in \cite{GKS2024}.

In Section \ref{sec bigraded symbol}, we review the definitions and how to compute these CR invariants for the $2$-nondegenerate models in $\mathbb{C}^4$ following our constructions in \cite[Section 6]{GKS2024}, in detail. This in particular, allows us to completely classify the normal forms of these CR invariants that are realized by $2$-nondegenerate CR hypersurfaces in $\mathbb{C}^4$, Proposition \ref{degree 0 symmetry bound}. This classification fixes $\mathbf{H}$, $\mathbf{S}_\zeta(0)$ and $\mathbf{S}_{\zeta\zeta}(0)$ of the normal form and leaves (according to \cite[Theorem 6.5]{GKS2024}) only freedom in a finite dimensional Lie subgroup of $\mathrm{CU}(\mathbf{H})\cap G_{0,0}$ (depending on $\mathbf{H}$, $\mathbf{S}_\zeta(0)$ and $\mathbf{S}_{\zeta\zeta}(0)$, see \eqref{CUdef} and \eqref{G00def}).

Regarding Section \ref{A complete normal form} in more detail, we resolve, through case by case analysis, the action of $\mathrm{CU}(\mathbf{H})\cap G_{0,0}$ on the remainder of the expansion of  $\mathbf{S}(\zeta)$ to obtain the normal forms. This allows us to classify the subgroup of CR symmetries of $2$-nondegenerate models belonging to $\mathrm{CU}(\mathbf{H})\cap G_{0,0}$. In particular, there are distinguished models realizing the modified CR symbols for which this subgroup is maximal  among all other $2$-nondegenerate CR hypersurfaces with this modified symbol (at a point), Corollary \ref{C4 degree zero bound}. We investigate these distinguished realizations of modified symbols in Section \ref{7-dimensional realizations} in detail, from which we obtain the defining equations for all homogeneous $2$-nondegenerate models, Proposition \ref{all hom models prop}. 

In table \ref{table of defining equations}, we summarize the new defining equations we obtain along with the previously know defining equations for the homogeneous $2$-nondegenerate CR hypersurfaces in $\mathbb{C}^4$.  For $7$ of them, equivalent defining equations to those provided in this article have been found previously, while $2$ are completely new.

\begin{table}[tbp]
\scalebox{.9}{
\begin{threeparttable}
\begin{tabular}{|l|c|c|c|}
\hline
\multirow{2}*{Label}& Known Defining Equation & \multirow{2}*{ISAD} & \multirow{2}*{References/Comments}\\
\cdashline{2-2}
& Defining Equation's Derived in \S\ref{Approaches to Hypersurface Realization}& & \\
\hline
\hline
\multirow{2}*{\To{6}($2$-nond.) Type \Rmnum{7}} & \Bo{2}$x_0=x_1x_2+x_1^2x_3$ 
&
\multirow{2}*{\To{5}16}
&
\To{3.4}\Bo{3.4}\multirow{2}*{ \parbox{4.2cm}{\flushleft The first equation appears in \cite[Theorem 2]{mozey2000} and the second is in \cite[Section 5, Example 1 with $n=3$]{labovskii1997dimensions} and \cite{beloshapka2022modification,porter2021absolute}.}}\\\cdashline{2-2}
\To{4.5}\Bo{4.5}& $\Re(w)=z_1\overline{z_2}+\overline{z_1}z_2+\Re(\zeta\overline{z_2}^2)$&&
\\\hline
\multirow{2}*{\To{5}($2$-nond.) Type \Rmnum{6}} & \To{4.5}\Bo{3}$\displaystyle \Re(w)=\frac{\Im\left(4z_1\overline{z_2}+\overline{z_3}(z_1^2+z_2^2)\right)}{1+\vert z_3\vert^2}$ 
&
\multirow{2}*{\To{5}15}
&
\multirow{2}*{\parbox{4.2cm}{\flushleft Studied in \cite{porter2021absolute,santi2020}, and this formula is derived in \cite[Section 5.4]{gregorovic2021equivalence}.}}\\\cdashline{2-2}
&$\displaystyle\Re(w)=\frac{|z_1|^2-|z_2|^2-2\Re(\zeta\overline{z_1}\overline{z_2})}{1+|\zeta|^2}$\To{4.6}\Bo{3.6}&&
\\\hline
\To{3}\Bo{3}\multirow{2}*{\To{6}($2$-nond.) Type \Rmnum{5}.A} & \To{4}\Bo{2.6}$\displaystyle x_0=\frac{x_1^2+x_2^2}{1+x_3},\,\, x_3>-1$ 
&
\multirow{2}*{\To{6}15}
&
\multirow{4}*{\To{11}\parbox{4.2cm}{\flushleft These are $7$-dimensional tubes over null cones of symmetric forms in $\mathbb{R}^4$, studied in \cite{porter2021absolute,santi2020}, and these formulas are derived in \cite[Section 5.3]{gregorovic2021equivalence}.}
}\\\cdashline{2-2}
&\To{4.2}\Bo{3}$\displaystyle\Re(w)=\frac{|z_1|^2+|z_2|^2+2\Re(\zeta\overline{z_1}\overline{z_2})}{1-|\zeta|^2}$&&
\\\cline{1-3}
\multirow{2}*{\To{5}($2$-nond.) Type \Rmnum{5}.B} & \To{4}\Bo{2.6}$\displaystyle x_0=\frac{x_1^2-x_2^2}{1+x_3},\,\, x_3>-1$ 

&
\multirow{2}*{\To{5}15}
&\\\cdashline{2-2}
&\To{4.2}\Bo{3}$\displaystyle\Re(w)=\frac{2\Re(z_1\overline{z_2})+2\Re(\zeta\overline{z_1}\overline{z_2})}{1-|\zeta|^2}$&&
\\\hline
\multirow{2}*{\To{6.5}($2$-nond.) Type \Rmnum{4}.A} &\To{4}\Bo{3.6}$\displaystyle x_0=\frac{x_1^2}{1+x_3}+x_2^2,\,\, x_3>-1$ 
&
\multirow{2}*{\To{6.5}10}
&
\multirow{4}*{\To{14.4} \parbox{4.2cm}{\flushleft These are studied in \cite{porter2021absolute,santi2020}. In \cite{porter2021absolute} they appear among the structures labeled as \emph{weakly non-nilpotent flat}.
}
}\\\cdashline{2-2}
&\To{4.2}\Bo{3}$\displaystyle\Re(w)=\frac{|z_1|^2+\Re(\zeta\overline{z_1}^2)}{1-|\zeta|^2}+|z_2|^2$&&
\\\cline{1-3}
\multirow{2}*{\To{6.5}($2$-nond.) Type \Rmnum{4}.B} & \To{4}\Bo{3.6}$\displaystyle x_0=\frac{x_1^2}{1+x_3}-x_2^2,\,\, x_3>-1$ 
&
\multirow{2}*{\To{6.5}10}
&\\\cdashline{2-2}
&\To{4.2}\Bo{3}$\displaystyle\Re(w)=\frac{|z_1|^2+\Re(\zeta\overline{z_1}^2)}{1-|\zeta|^2}-|z_2|^2$&&
\\\hline
($2$-nond.) Type \Rmnum{3} &
given by \eqref{def eqn type 3}
&
9
&
\\\hline
\To{4}\Bo{4.4}\multirow{2}*{\To{7}($2$-nond.) Type \Rmnum{2}} &$\displaystyle x_0=\frac{x_1x_2}{1+x_3}+\frac{x_2^2x_3}{(1+x_3)^2},\,\, x_3>-1$ 
&
\multirow{2}*{\To{7}8}
&
\multirow{2}*{\parbox{4.2cm}{This is equivalent to a tube over the hypersurface in $\mathbb{R}^4$ given by formula (1) in \cite[Theorem 2]{mozey2000}.}}\\\cdashline{2-2}
&\To{3.8}\Bo{3.2} given by \eqref{def eqn type 2 b}&&
\\\hline
($2$-nond.) Type \Rmnum{1} &
given by  \eqref{def eqn type 1 b}
&
8
&
\\\hline
\end{tabular}
\caption{Defining equations for homogeneous $2$-nondegenerate models in $\mathbb{C}^4$, given with $x_j$ denoting the real part of a complex variable. The labels \emph{Type \Rmnum{1}-\Rmnum{7}} refer to the labeling of the $2$-nondegenerate structures assigned in the classification of \cite{SykesHomogeneous} (compare with \cite[Table 1]{SykesHomogeneous}). The column labeled \emph{ISAD} displays the structures' infinitesimal symmetry algebra dimensions.} \label{table of defining equations}
\end{threeparttable}
}
\end{table}

 We record the defining equations for the two new models here. The first of the new models is given by 
\begin{align}\label{def eqn type 3}
\Re(w)&=2\frac{\Re\left(2|z_1|^2+\zeta\overline{z_1}^2\right)}{1-|\zeta|^2}\\
&\quad + \frac{\Re\left(-9|z_1|^2-3|z_2|^2-6\overline{z_1}^2\zeta+4\sqrt{3}z_1\overline{z_2}\zeta+2\sqrt{3}\overline{z_1z_2}\zeta^2\right)}{\left( 1-|\zeta|^2\right)^2}+\\
&\quad + 2\frac{\Re\left(3|z_1|^2+|z_2|^2+3\overline{z_1}^{2}\zeta-2\sqrt{3}z_1\overline{z_2}\zeta+\overline{z_2}^2\zeta^3-2\sqrt{3}\overline{z_1z_2}\zeta^2\right)}{\left(1-|\zeta|^2\right)^3},
\end{align}
defining a CR hypersurface with the intriguing property that the symmetry algebra of the dynamical Legendrian contact structure naturally induced on its (local) Levi leaf space in \cite{SykesHomogeneous} is the split real form of the complex Lie algebra $\mathrm{Lie}(G_2)$.

The other new model is given by
\begin{align}
   \Re(w)=&z_1\overline{z_2}+\overline{z_1}z_2+\frac{\Re\left(-(1+i)(\overline{\zeta}+\overline{\zeta}^2)z_1^2 -2\sqrt{2}iz_1z_2\overline{\zeta}^2)+(1-i)(\overline{\zeta}+\overline{\zeta}^2)z_2^2\right)}{(2\zeta+1)(2\overline{\zeta}+1)}\\
   &+\frac{2|\zeta|^2}{(2\zeta+1)(2\overline{\zeta}+1)}\bigg(-2\Re\left(z_1\overline{z_2}((1+i)|\zeta|^2+2i\Re(\zeta)+i) \right)\label{def eqn type 1 b}\\
&\hspace{3.7cm}+\sqrt{2}|z_1|^2(|\zeta|^2+\Re((1-i)\zeta))+\sqrt{2}|z_2|^2(|\zeta|^2+\Re((1+i)\zeta))\\
&\hspace{3.7cm}+\Re\left(z_1^2\left((i-1)|\zeta|^2-2\overline{\zeta}-1-i\right) \right)-2\sqrt{2}\Re\left(iz_1z_2(|\zeta|^2+\overline{\zeta})\right)\\
&\hspace{3.7cm}+\Re\left(z_2^2((1+i)|\zeta|^2+2\overline{\zeta}+1-i)\right)\bigg).
\end{align} 
We also record here the new defining equation given by 
\begin{align}\label{def eqn type 2 b}
   \Re(w)&=\frac{z_1\overline{z_2}+\overline{z_1}z_2+2\Re\left(z_1z_2\overline{\zeta}\right)+\Re\left(z_2^2(\overline{\zeta}^2+\overline{\zeta})\right)}{1-|\zeta|^2}+\frac{|\zeta|^2}{(1-|\zeta|^2)^2}(2|z_2|^2\Re\left(1+\overline{\zeta}\right)\\
    &\quad\quad+\Re\left(z_2^2(|\zeta|^2+\overline{\zeta}^2+2\overline{\zeta})\right)).
\end{align}
for the type \Rmnum{2} structure in Table \ref{table of defining equations} because it does not fit well within table.

It is notable that for $3$-nondegenerate structures in $\mathbb{C}^4$ there is a unique bigraded symbol, and moreover Kruglikov--Santi proved in \cite{kruglikov2023} that there is a locally unique $3$-nondegenerate homogeneous hypersurface in $\mathbb{C}^4$. Expressed in the partial normal form of \cite[Theorem 1.1]{GKS2024}, its defining equation has a naturally distinguished weighted homogeneous part given by \eqref{eqn: lightconemodel}, and its $3$-nondegenerate character can only be detected from the higher order terms.

Lastly, Section \ref{Infinitesimal symmetry algebras of homogeneous models} presents the full descriptions of the entire symmetry algebras for the type \Rmnum{1}, \Rmnum{2}, and \Rmnum{3} structures of Table \ref{table of defining equations}, focusing on just these three because the symmetry algebra descriptions for the other models in Table \ref{table of defining equations} can be found from the references cited within the table.

\section{Invariants of CR hypersurfaces: bigraded symbols and modified symbols}\label{Symbols of CR hypersurfaces}\label{sec bigraded symbol}

Let us now consider a $2$-nondegenerate model $M_0$ given by \eqref{gen def fun}, \eqref{H from S2 formula}, and \eqref{S from S2 formula}. We will determine the basic invariants of $M_0$. We choose the following frame $\phi=(g,f_1,f_2,e,\overline{f_1},\overline{f_2},\overline{e})$ of $\mathbb{C}TM_0:$
\begin{align}
g:=&\frac{\partial}{\partial \Im(w)},\\
f_a:=&\frac{\partial}{\partial z_a}-i(H_a(\zeta,\overline{\zeta})\overline{z}+S_a(\zeta,\overline{\zeta})z)\frac{\partial}{\partial \Im(w)},\\
e:=&\frac{\partial}{\partial \zeta}-i\left(z^TH_{\zeta}(\zeta,\overline{\zeta})\overline{z}+\Re(z^TS_{\zeta}(\zeta,\overline{\zeta})z)\right)\frac{\partial}{\partial \Im(w)}\\
&-\sum_{\overline{b},c}\left(z^TH_{\zeta,\overline{b}}(\zeta,\overline{\zeta})+\overline{z}^T\overline{S_{\overline{\zeta},\overline{b}}(\zeta,\overline{\zeta})}\right)(H(\zeta,\overline{\zeta})^{-1})_{c,\overline{b}}f_c,
\end{align}
where the notation $H_a(\zeta,\overline{\zeta})$ and $S_a(\zeta,\overline{\zeta})$ denotes the index $a$ row of $H(\zeta,\overline{\zeta})$ and $S(\zeta,\overline{\zeta})$, respectively, $(H(\zeta,\overline{\zeta})^{-1})_{c,\overline{b}}$ denotes the $(c,b)$ entry of $H(\zeta,\overline{\zeta})^{-1}$, and $H_{\zeta,\overline{b}}(\zeta,\overline{\zeta})$ (resp. $S_{\zeta,\overline{b}}(\zeta,\overline{\zeta})$) denotes the index $b$ column of $H_{\zeta}(\zeta,\overline{\zeta})$ (resp. $S_{\zeta}(\zeta,\overline{\zeta})$).

The vector fields $f_1$, $f_2$, and $e$ span a CR structure  $\mathcal{H}\subset \mathbb{C}TM_0$ on $M_0$, that is, $\mathcal{H}$ is a $3$-dimensional complex, integrable (i.e., $[\mathcal{H},\mathcal{H}]\subset \mathcal{H}$) subbundle of $\mathbb{C}TM_0$ with   $\mathcal{H}\cap \overline{\mathcal{H}}=0$. 

Levi form of $M_0$ is defined by
\[
\mathcal{L}^{1}\left(X_p,Y_p\right):=\frac{1}{2i}\left[X,\overline{Y}\right]_p\pmod{\mathcal{H}\oplus\overline{\mathcal{H}}}
\,\,\forall\,p\in M_0,
\]
where $X,Y$ are sections of $\mathcal{H}$ extending $X_p,Y_p.$
The Levi form $\mathcal{L}^1$ can be represented with respect to the frame $\phi$ by the matrix
\begin{align}\label{key example Lform}
\left(
\begin{array}{cc}
H(\zeta,\overline{\zeta}) & 0 \\
0  & 0  \\
\end{array}
\right),
\end{align}
where the off diagonal zeros follow from the definition of the frame, while the diagonal zero is consequence of \eqref{H from S2 formula} and \eqref{S from S2 formula}. Further consequences of \eqref{H from S2 formula} and \eqref{S from S2 formula} that we will use are
\begin{align}
H(0,0)&=\mathbf{H},\
S(0,0)=0,\
H_{\zeta}(0,0)=0,\\
S_{\overline{\zeta}}(0,0)=0,\
S_{\zeta\overline{\zeta}}(0,0)&=0,\
S_{\zeta}(0,0)=\mathbf{H}^T\mathbf{S}_\zeta(0)\mathbf{H},\
S_{\zeta\zeta}(0,0)=\mathbf{H}^T\mathbf{S}_{\zeta\zeta}(0)\mathbf{H}.
\end{align}

In particular, the vector field $e$ spans the Levi kernel $\mathcal{K}$, i.e., the left-kernel of $\mathcal{L}^1$ in $\mathcal{H}$. As we know from \cite[Theorem 1.2]{GKS2024}, there are infinitesimal symmetries transversal to the leaves of the Levi kernel. Therefore, we will focus the computations of invariant at the points $p\in M_0$ with $\Im(w)=0,z_1=0,z_2=0$, because this is the leaf of span of $e,\overline{e}$ through $0$.

The frame $\phi$ is adapted in the sense that $(g,f_1,f_2,\overline{f_1},\overline{f_2})$ together with Lie bracket induced by $\mathcal{L}^1$ define a complex $5$-dimensional Heisenberg algebra $\mathbb{C}\mathfrak{g}_{-}(p)$ that we call nondegenerate part of the bigraded symbol of $M_0$ at $p$ and that $e$ spans the Levi kernel. Indeed, the decomposition $(g),(f_1,f_2),(\overline{f_1},\overline{f_2})$ defines a natural bigrading
\[\mathbb{C}\mathfrak{g}_{-}(p)=\mathfrak{g}_{-2,0}(p)\oplus \mathfrak{g}_{-1,1}(p)\oplus\mathfrak{g}_{-1,-1}(p), \]
that is $[\mathfrak{g}_{i,j}(p),\mathfrak{g}_{k,l}(p)]\subset \mathfrak{g}_{i+k,j+l}(p).$ We denote by $\mathfrak{csp}(\mathbb{C}\mathfrak{g}_{-1}(p))$ the Lie algebra of derivations preserving the first grading of $\mathbb{C}\mathfrak{g}_{-}(p)$. Let us note that the first grading of $\mathbb{C}\mathfrak{g}_{-}(p)$ is actually the natural grading induced by the weights \eqref{grading convention} and there is the corresponding grading element in $\mathfrak{csp}(\mathbb{C}\mathfrak{g}_{-1}(p))$ that is an infinitesimal symmetry of $M_0$.

There is a natural bigrading 
\[\mathfrak{csp}(\mathbb{C}\mathfrak{g}_{-1}(p))=\mathfrak{csp}(\mathbb{C}\mathfrak{g}_{-1}(p))_{0,-2}\oplus \mathfrak{csp}(\mathbb{C}\mathfrak{g}_{-1}(p))_{0,0}\oplus \mathfrak{csp}(\mathbb{C}\mathfrak{g}_{-1}(p))_{0,2},\]
where $\mathfrak{csp}(\mathbb{C}\mathfrak{g}_{-1}(p))_{0,0}$ is spanned by the grading element and the linear transformations $L\in \mathfrak{gl}(2,\mathbb{C})$ in the basis $(f_1,f_2)$, $\mathfrak{csp}(\mathbb{C}\mathfrak{g}_{-1}(p))_{0,2}$ is spanned by $2\times 2$ symmetric matrices of the form $S^{0,2}=\Xi H(\zeta,\overline{\zeta})^{-1}$ for $\Xi$ mapping $(\overline{f_1},\overline{f_2})$ onto $(f_1,f_2)$ and $\mathfrak{csp}(\mathbb{C}\mathfrak{g}_{-1}(p))_{0,-2}$ is spanned by $2\times 2$ symmetric matrices of the form $S^{0,-2}=H(\zeta,\overline{\zeta})\overline{\Xi}$ for $\overline{\Xi}$ mapping $(f_1,f_2)$ onto $(\overline{f_1},\overline{f_2})$. We adopt this notation, because the $S^{0,\pm2}$ matrices provide explicit parametrization and insight into the geometric structure. The price we pay is that the complex conjugation induced on $\mathfrak{csp}(\mathbb{C}\mathfrak{g}_{-1}(p))$ takes the following form $\sigma_{\phi}$ in the frame $\phi:$
\begin{align}
    \sigma_{\phi}(L)&=-(H(\zeta,\overline{\zeta})^T)^{-1}\overline{L}^TH(\zeta,\overline{\zeta})^T\in \mathfrak{csp}(\mathbb{C}\mathfrak{g}_{-1}(p))_{0,0}\\
    \sigma_{\phi}(S^{0,2})&= H(\zeta,\overline{\zeta})\overline{S^{0,2}}H(\zeta,\overline{\zeta})^T\in \mathfrak{csp}(\mathbb{C}\mathfrak{g}_{-1}(p))_{0,-2}\\
    \sigma_{\phi}(S^{0,-2})&=  (H(\zeta,\overline{\zeta})^T)^{-1}\overline{S^{0,-2}}H(\zeta,\overline{\zeta}\big)^{-1}\in \mathfrak{csp}(\mathbb{C}\mathfrak{g}_{-1}(p))_{0,2}.
\end{align}

Now, let us compute the Lie brackets
\begin{align}
[e,\overline{f_k}](p)\equiv \sum_{l=1}^{2}\Xi(\phi(p))_{l,k} f_l\pmod{\mathcal{K}\oplus\overline{\mathcal{H}}}
\quad\quad\forall 1\leq k\leq 2.
\end{align}
At a point $p\in M_0$ with $\Im(w)=0,z_1=0,z_2=0$, we obtain
\begin{align}\label{key example Xi mat}
\Xi(\phi(p))=(H(\zeta,\overline{\zeta})^T)^{-1}S_{\zeta}(\zeta,\overline{\zeta}),\quad S^{0,2}(\phi(p))=(H(\zeta,\overline{\zeta})^T)^{-1}S_{\zeta}(\zeta,\overline{\zeta})H(\zeta,\overline{\zeta})^{-1}
\end{align}
and in particular,
\begin{align}
S^{0,2}(\phi(0))=\mathbf{S}_{\zeta}(0).
\end{align}

In other words, the Levi kernels $\mathcal{K}_p$ are embedded into $\mathfrak{csp}(\mathbb{C}\mathfrak{g}_{-1}(p))$ via the embedding $\iota:\mathcal{K}_p\to \mathfrak{csp}(\mathbb{C}\mathfrak{g}_{-1}(p))_{0,2}$ given by
\begin{align}\label{L kernel csp embedding}
\iota\big(e(p)\big):=S^{0,2}(\phi(p)).
\end{align}
It is shown in \cite{GKS2024} that $\iota$ -- although defined as an operator represented by a matrix w.r.t. a basis that are both given in terms of $\phi$  -- is indeed well defined by \eqref{L kernel csp embedding} independent from the choice of adapted frame $\phi$. Moreover, the CR manifold $M_0$ is $2$-nondegenerate at $p$ if $\iota(e)\neq 0$ and thus $M_0$ is uniformly $2$-nondegenerate around $0$ because we assume $\mathbf{S}_{\zeta}(0)\neq 0.$

Extending $\iota$ to an embedding $\iota:(\mathcal{K})_p\oplus (\overline{\mathcal{K}})_p\to \mathfrak{csp}(\mathbb{C}\mathfrak{g}_{-1}(p))_{0,2}\oplus \mathfrak{csp}(\mathbb{C}\mathfrak{g}_{-1}(p))_{0,-2}$ by the complex conjugation allows us to define the bigraded symbol.

\begin{definition}[introduced in \cite{porter2021absolute}]\label{CR symbol}
The \emph{bigraded symbol} $\mathbb{C}\mathfrak{g}_{\leq 0}(p)$ of $M_0$ at $p$ is the bigraded vector space 
\begin{align}\label{symbol decomposition}
\mathbb{C}\mathfrak{g}_{\leq 0}(p)&:=\mathfrak{g}_{-2,0}(p)\oplus\mathfrak{g}_{-1,-1}(p)\oplus\mathfrak{g}_{-1,1}(p)\oplus \mathfrak{g}_{0,-2}(p)\oplus \mathfrak{g}_{0,0}(p)\oplus \mathfrak{g}_{0,2}(p)\\&=\mathbb{C}\mathfrak{g}_{-}(p)\oplus \mathfrak{g}_{0,-2}(p)\oplus \mathfrak{g}_{0,0}(p)\oplus \mathfrak{g}_{0,2}(p)
\end{align}
given by $\mathfrak{g}_{0,2}(p):=\iota(\mathcal{K})_p$, $\mathfrak{g}_{0,-2}(p):=\iota(\overline{\mathcal{K}})_p$, and 
\[
\mathfrak{g}_{0,0}(p):=\{v\in \mathfrak{csp}(\mathbb{C}\mathfrak{g}_{-1}(p))\,|\, [v,w]\subset \mathfrak{g}_{j,k}(p)\,\forall\, w\in\mathfrak{g}_{j,k}(p),\,\forall (j,k)\in\mathcal{I} \},
\]
with $\mathcal{I}:=\{(-1,-1),(-1,1),(0,-2),(0,2)\}$, together with the antilinear involution given by restricting $v\mapsto \overline{v}$ from $\mathbb{C}\mathfrak{g}_{-}(p)\oplus \mathfrak{csp}(\mathbb{C}\mathfrak{g}_{-1}(p))$ to $\mathbb{C}\mathfrak{g}_{\leq 0}(p)$.
\end{definition}

So 
\begin{align}\label{first order constancy criterion alt a}
\iota(\mathcal{K})&\cong\mathrm{span}\left\{(H(\zeta,\overline{\zeta})^T)^{-1}S_{\zeta}(\zeta,\overline{\zeta})H(\zeta,\overline{\zeta})^{-1}\right\}
,\
\iota(\overline{\mathcal{K}})\cong\mathrm{span}\left\{\overline{S_{\zeta}(\zeta,\overline{\zeta})}\right\},\\
\iota(\mathcal{K}_0)&\cong\mathrm{span}\left\{\mathbf{S}_{\zeta}(0)\right\}
,\
\iota(\overline{\mathcal{K}}_0)\cong\mathrm{span}\left\{\mathbf{H}\overline{\mathbf{S}_{\zeta}(0)}\mathbf{H}^T\right\},
\end{align}
and one can conclude the following:

\begin{corollary}\label{matrix representation prop}
At $0\in M_0$, the bigraded symbol of the $2$-nondegenerate model $M_0$ given by \eqref{gen def fun}, \eqref{H from S2 formula}, and \eqref{S from S2 formula} is completely determined by $\mathbf{H}$ and $\mathbf{S}_{\zeta}(0)$.
\end{corollary}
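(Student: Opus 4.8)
The plan is to unpack Definition \ref{CR symbol} summand by summand and verify that, once $p$ is specialized to the origin, each constituent of the bigraded symbol --- the six graded vector spaces, the Lie bracket on $\mathbb{C}\mathfrak{g}_{-}(0)$ together with the induced action of $\mathfrak{csp}(\mathbb{C}\mathfrak{g}_{-1}(0))$, and the restricted antilinear involution $\sigma_{\phi}$ --- is a function of $\mathbf{H}$ and $\mathbf{S}_{\zeta}(0)$ alone. Since the bigraded symbol is precisely this package of data, establishing the claim for every piece proves the corollary.

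First I would treat the negatively graded (Heisenberg) part $\mathbb{C}\mathfrak{g}_{-}(0)=\mathfrak{g}_{-2,0}(0)\oplus\mathfrak{g}_{-1,1}(0)\oplus\mathfrak{g}_{-1,-1}(0)$. Its only nontrivial bracket is the Levi form, whose matrix in the frame $\phi$ is given by \eqref{key example Lform}; evaluating at the origin and using $H(0,0)=\mathbf{H}$ shows this bracket, hence the graded Lie algebra $\mathbb{C}\mathfrak{g}_{-}(0)$, depends only on $\mathbf{H}$. Consequently the algebra of grading-preserving derivations $\mathfrak{csp}(\mathbb{C}\mathfrak{g}_{-1}(0))$ --- into which the remaining summands embed --- is likewise determined by $\mathbf{H}$, as are the frame descriptions of $\mathfrak{csp}(\mathbb{C}\mathfrak{g}_{-1}(0))_{0,0}$ and $\mathfrak{csp}(\mathbb{C}\mathfrak{g}_{-1}(0))_{0,\pm 2}$ and the involution formulas for $\sigma_{\phi}$, all of which involve $H(\zeta,\overline{\zeta})$ only through its value $\mathbf{H}$ at the origin.

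Next I would identify the summands attached to the Levi kernel. By \eqref{first order constancy criterion alt a} we have $\mathfrak{g}_{0,2}(0)=\iota(\mathcal{K}_0)=\mathrm{span}\{\mathbf{S}_{\zeta}(0)\}$, determined by $\mathbf{S}_{\zeta}(0)$, and $\mathfrak{g}_{0,-2}(0)=\iota(\overline{\mathcal{K}}_0)=\mathrm{span}\{\mathbf{H}\,\overline{\mathbf{S}_{\zeta}(0)}\,\mathbf{H}^T\}$, which is the image of $\mathfrak{g}_{0,2}(0)$ under $\sigma_{\phi}$ and hence determined by $\mathbf{H}$ and $\mathbf{S}_{\zeta}(0)$. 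Finally, $\mathfrak{g}_{0,0}(0)$ is defined as the subspace of $v\in\mathfrak{csp}(\mathbb{C}\mathfrak{g}_{-1}(0))$ satisfying $[v,\mathfrak{g}_{j,k}(0)]\subseteq\mathfrak{g}_{j,k}(0)$ for all $(j,k)\in\mathcal{I}$; since both the ambient algebra and every $\mathfrak{g}_{j,k}(0)$ with $(j,k)\in\mathcal{I}$ have just been shown to depend only on $\mathbf{H}$ and $\mathbf{S}_{\zeta}(0)$, so does this normalizer-type subspace. Assembling the six summands with their brackets and the restricted involution then exhibits the bigraded symbol as a function of $(\mathbf{H},\mathbf{S}_{\zeta}(0))$.

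The computations here are routine bookkeeping, and the one point deserving care is conceptual rather than technical: one must read ``completely determined'' as meaning that the entire graded package --- not merely the underlying graded vector space --- is recovered from $(\mathbf{H},\mathbf{S}_{\zeta}(0))$, and one must invoke the already-cited frame-independence of $\iota$ so that the conclusion does not secretly depend on the choice of adapted frame $\phi$. The potential worry that higher-order data such as $\mathbf{S}_{\zeta\zeta}(0)$ might enter is dispelled immediately by evaluating at the origin, where the listed identities ($H(0,0)=\mathbf{H}$, $S(0,0)=0$, $H_{\zeta}(0,0)=0$, $S_{\overline{\zeta}}(0,0)=0$) collapse every relevant expression to its leading value, so that no such higher derivatives appear in any of the six summands.
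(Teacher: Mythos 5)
Your proposal is correct and follows essentially the same route as the paper, which presents the corollary as an immediate consequence of the preceding computations: the Levi form at the origin is $\mathbf{H}$ by \eqref{key example Lform}, and $\mathfrak{g}_{0,\pm 2}(0)$ are the spans of $\mathbf{S}_{\zeta}(0)$ and $\mathbf{H}\overline{\mathbf{S}_{\zeta}(0)}\mathbf{H}^T$ by \eqref{first order constancy criterion alt a}, with $\mathfrak{g}_{0,0}(0)$ and the involution then determined as you describe. Your more explicit summand-by-summand bookkeeping adds nothing beyond what the paper leaves implicit, but it is accurate.
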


Let us continue by computing the Lie brackets
\begin{align}
[e,f_k](p)\equiv\sum_{l=1}^{2}(\Omega\big(\phi\big)(p))_{l,k} f_l\pmod{\mathcal{K}\oplus\overline{\mathcal{K}}}
\quad\quad\forall 1\leq k\leq 2.
\end{align}
At a point $p\in M_0$ with $\Im(w)=0,z_1=0,z_2=0$, we obtain
\begin{align}\label{key example Omega mat}
\Omega\big(\phi\big)=(H(\zeta,\overline{\zeta})^T)^{-1}H_{\zeta}(\zeta,\overline{\zeta})^T
\end{align}
and in particular,
\begin{align}
\Omega\big(\phi\big)(0)=0.
\end{align}

In other words, the Levi kernels $\mathcal{K}_p$ are embedded into $\mathfrak{csp}(\mathbb{C}\mathfrak{g}_{-1}(p))$ via the embedding $\iota_\phi:\mathcal{K}_p\to \mathfrak{csp}(\mathbb{C}\mathfrak{g}_{-1}(p))_{0,2}\oplus  \mathfrak{csp}(\mathbb{C}\mathfrak{g}_{-1}(p))_{0,0}$ given by
\begin{align}\label{iota phi map}
\iota_\phi\big(e(p)\big):=\left(S^{0,2}(\phi(p)),\Omega\big(\phi\big)(p)\right).
\end{align}
It is shown in \cite{GKS2024} that $\iota_\phi$ is indeed well defined by \eqref{iota phi map}, but depends on the adapted frame $\phi$ in a neighborhood of $p$ and not just at $p$. This provides the following modified version of the bigraded symbol, where $\iota_\phi: \overline{\mathcal{K}}_p\to \mathfrak{csp}(\mathbb{C}\mathfrak{g}_{-1}(p))_{0,-2}\oplus  \mathfrak{csp}(\mathbb{C}\mathfrak{g}_{-1}(p))_{0,0}$ is again obtained by the complex conjugation $\sigma_{\phi}$.

\begin{definition}[{\cite[Definition 2.16]{GKS2024}}]\label{mod symb in frame} For $p\in M_0$ and adapted frame $\phi$, the \emph{modified symbol $\mathfrak{g}_{\leq 0}^{\mathrm{mod}}\big(\phi;p\big)$ at the point $p$ along $\phi$} is the graded vector space
\[
\mathfrak{g}_{\leq 0}^{\mathrm{mod}}\big(\phi;p\big):=\mathbb{C}\mathfrak{g}_{-}(p)\oplus \mathfrak{g}_0^{\mathrm{mod}}\big(\phi;p\big)=\mathbb{C}\mathfrak{g}_{-}(p)\oplus\mathfrak{g}_{0,-}^{\mathrm{mod}}(\phi; p\big)\oplus \mathfrak{g}_{0,0}(p)\oplus\mathfrak{g}_{0,+}^{\mathrm{mod}}(\phi; p\big),
\]
given by
\[
\mathfrak{g}_0^{\mathrm{mod}}\big(\phi;p\big):=\iota_{\phi}\left(\overline{\mathcal{K}}(p)\right)+\mathfrak{g}_{0,0}(p)+\iota_{\phi}\left(\mathcal{K}(p)\right).
\]
\end{definition}
Notice that the modified symbol at $0$ along $\phi$ is determined by the three matrices $\mathbf{H}$, $S^{0,2}(\phi(0))$, and $\Omega\big(\phi\big)(0)$.

We would like to remove the dependence on the choice of the adapted frame from this definition. To do so let us compute the change of the bigraded symbol a points $p\in M_0$ with $\Im(w)=0,z_1=0,z_2=0$:
\begin{align}
e(\iota(\overline{e}))(p)&=
\overline{S_{\zeta\overline{\zeta}}(\zeta,\overline{\zeta})}=H_{\zeta}(\zeta,\overline{\zeta})H(\zeta,\overline{\zeta})^{-1}\overline{S_{\zeta}(\zeta,\overline{\zeta})}+\overline{S_{\zeta}(\zeta,\overline{\zeta})}(H(\zeta,\overline{\zeta})^{-1})^TH_{\zeta}(\zeta,\overline{\zeta})^T,\\
e(\iota(e))(p)=&-(H(\zeta,\overline{\zeta})^T)^{-1}H_{\zeta}(\zeta,\overline{\zeta})^T(H(\zeta,\overline{\zeta})^T)^{-1}S_{\zeta}(\zeta,\overline{\zeta})H(\zeta,\overline{\zeta})^{-1}+\\
&+(H(\zeta,\overline{\zeta})^T)^{-1}S_{\zeta\zeta}(\zeta,\overline{\zeta})H(\zeta,\overline{\zeta})^{-1}+\\
&-(H(\zeta,\overline{\zeta})^T)^{-1}S_{\zeta}(\zeta,\overline{\zeta})H(\zeta,\overline{\zeta})^{-1}H_{\zeta}(\zeta,\overline{\zeta})H(\zeta,\overline{\zeta})^{-1}.
\end{align}
and in particular
\begin{align}
e(\iota(\overline{e}))(0)&=0,\ e(\iota(e))(0)=\mathbf{S}_{\zeta\zeta}(0).
\end{align}
In other words, we define bilinear maps
\[O_{0,-2}(\phi)(p):\mathcal{K}_p\otimes \mathfrak{g}_{0,-2}(p)\to \mathfrak{csp}(\mathbb{C}\mathfrak{g}_{-1}(p))_{0,-2}/\mathfrak{g}_{0,-2}(p)\]
and 
\[O_{0,2}(\phi)(p):\mathcal{K}_p\otimes \mathfrak{g}_{0,2}(p)\to \mathfrak{csp}(\mathbb{C}\mathfrak{g}_{-1}(p))_{0,2}/\mathfrak{g}_{0,2}(p)\]
by \[O_{0,-2}(\phi)(p)(X,S^{0,-2}(\phi(p))):=X\left(S^{0,-2}(\phi)\right)(p)\] and \[O_{0,2}(\phi)(p)(X,S^{0,2}(\phi(p))):=X\left(S^{0,2}(\phi)\right)(p).\] It is shown in \cite{GKS2024} that $O_{0,-2}(\phi),O_{0,2}(\phi)$ are indeed well defined, but depends on the adapted frame $\phi$. What is crucial is that the dependence of $\Omega(\phi)(p)$ and $O_{0,\pm2}(\phi)(p)$ on $\phi$ is connected in the following way:
\[
\Omega(\phi^\prime)(p)=\Omega(\phi)(p)+B, B\in \mathfrak{gl}(2,\mathbb{C})
\]
if and only if
\begin{align}\label{equiv class O}
O_{0,-2}(\phi^\prime)(p)\left(e,S^{0,-2}(\phi^\prime(p))\right)&=O_{0,-2}(\phi)(p)\left(e,S^{0,-2}(\phi(p))\right)\\
&\quad+B^TS^{0,-2}(\phi(p))+S^{0,-2}(\phi(p))B, \\
O_{0,2}(\phi^\prime)(p)\left(e,S^{0,2}(\phi^\prime(p))\right)&=O_{0,2}(\phi)(p)\left(e,S^{0,2}(\phi(p))\right)\\
&\quad-BS^{0,2}(\phi(p))-S^{0,2}(\phi(p))B^T.
\end{align}
Therefore, posing a normalization condition on the equivalence classes \eqref{equiv class O} by choosing a unique pair of bilinear maps 
\[
\big(O_{0,-2}^N(\phi(p)),O_{0,2}^N(\phi(p))\big)\in \left\{\left.\left(O_{0,-2}(\phi^\prime)(p),O_{0,2}(\phi^\prime)(p)\right)\,\right|\, \phi^\prime(p)=\phi(p)\right\}
\]
in each equivalence class provides the following notion of modified symbols.

\begin{definition}\label{normalized sections}
We say that an adapted frame $\phi$ is \emph{normalized with respect to a choice of normalization condition $\big(O_{0,-2}^N(\phi(p)),O_{0,2}^N(\phi(p))\big)$ at $p$} if
$O_{0,\pm 2}(\phi)(p)=O_{0,\pm2}^N(\phi(p))$. We say that $\mathfrak{g}_{\leq0}^{\mathrm{mod}}(\tilde \phi(p)):=\mathfrak{g}_{\leq0}^{\mathrm{mod}}({ \phi;p})$ is the \emph{modified symbol at $\phi(p)$ with respect to normalization condition $\big(O_{0,-2}^N(\phi(p)),\allowbreak O_{0,2}^N(\phi(p))\big)$} if $\phi$ is normalized with respect to $\big(O_{0,-2}^N(\phi(p)),\allowbreak O_{0,2}^N(\phi(p))\big)$. 

We say that $(M_0,\mathcal{H})$ has \emph{constant bigraded symbol up to first order at $p$} if $O_{0,\pm2}^N(\phi(p))=0$ for any adapted frame $\phi$ around $p$, and otherwise we say that $O_{0,\pm2}^N(\phi(p))$ are obstructions to first order constancy at $p$.
\end{definition}

So in order to obtain the modified symbol and obstruction to first order constancy at $0$ we need to choose normalization condition $(O_{0,-2}^N(\phi(0)),O_{0,2}^N(\phi(0)))$. We choose $O_{0,-2}^N(\phi(0))=0$, which reduces freedom of $\Omega(\phi(0))$ to $N_{\mathfrak{gl}(2,\mathbb{C})}(\mathfrak{g}_{0,-2}(0))$ that consists of $B$ such that $B^TS^{0,-2}(\phi(0))+S^{0,-2}(\phi(0))B\in \mathrm{span}\{S^{0,-2}(\phi(0))\}$, i.e., 
\[N_{\mathfrak{gl}(2,\mathbb{C})}(\mathfrak{g}_{0,-2}(0)):=
\left\{B\in \mathfrak{gl}(2,\mathbb{C})\,\left|\,B^T\mathbf{H}\overline{\mathbf{S}_{\zeta}(0)}\mathbf{H}^T+\mathbf{H}\overline{\mathbf{S}_{\zeta}(0)}\mathbf{H}^TB\in \mathrm{span}\{\mathbf{H}\overline{\mathbf{S}_{\zeta}(0)}\mathbf{H}^T\}
\right.\right\}.
\]
Therefore, to fix $O_{0,2}^N(\phi(0))$ we need to decompose
\[\mathfrak{csp}(\mathbb{C}\mathfrak{g}_{-1}(0))_{0,2}/\mathfrak{g}_{0,2}(0)=N_{0,2}\oplus \left\{B\mathfrak{g}_{0,2}(0)+\mathfrak{g}_{0,2}(0)B^T:\ B\in N_{\mathfrak{gl}(2,\mathbb{C})}(\mathfrak{g}_{0,-2}(0))\right\}.\]
If we denote $\pi_{N_{0,2}}$ the projection to $N_{0,2}$ in this decomposition, then 
\begin{align}\label{rank 1 normalization}
O^N_{0,2}( \phi(0))&:=\pi_{N_{0,2}}\circ O_{0,2}(\phi)(0),\\
O^N_{0,2}( \phi(0))(e,\mathbf{S}_{\zeta}(0))&=\pi_{N_{0,2}}(\mathbf{S}_{\zeta\zeta}(0))=\mathbf{S}_{\zeta\zeta}(0)-\Omega(\phi(0))\mathbf{S}_\zeta(0)-\mathbf{S}_\zeta(0)\Omega(\phi(0))^T,\\ \Omega(\phi(0))&\in N_{\mathfrak{gl}(2,\mathbb{C})}(\mathfrak{g}_{0,-2}(0))
\end{align}
defines $\Omega(\phi(0))$ up to $\mathfrak{g}_{0,0}(0)$ and we can conclude the following.

\begin{corollary}\label{mod symbol determination corollary}
At $0\in M_0$, the modified symbol and the obstructions to first order constancy for a fixed normalization condition $N_{0,2}$ of the $2$-nondegenerate model $M_0$ given by \eqref{gen def fun}, \eqref{H from S2 formula}, and \eqref{S from S2 formula} are completely determined by $\mathbf{H}$, $\mathbf{S}_{\zeta}(0)$ and $\mathbf{S}_{\zeta\zeta}(0)$. With respect to $N_{0,2}$ the modified symbol itself is represented by $\mathbf{H}$, $\mathbf{S}_{\zeta}(0)$, and $\Omega(\phi(0))$.
\end{corollary}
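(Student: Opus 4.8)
The plan is to read the asserted dependencies directly off the formulas assembled above, checking that every ingredient entering the modified symbol and the obstructions $O^N_{0,\pm2}(\phi(0))$ is expressible through $\mathbf{H}$, $\mathbf{S}_\zeta(0)$, and $\mathbf{S}_{\zeta\zeta}(0)$ alone. First I would recall the decomposition in Definition \ref{mod symb in frame}, reducing the task to controlling the Heisenberg part $\mathbb{C}\mathfrak{g}_-(0)$ together with the three blocks $\iota_\phi(\overline{\mathcal{K}}_0)$, $\mathfrak{g}_{0,0}(0)$, and $\iota_\phi(\mathcal{K}_0)$ that make up $\mathfrak{g}_0^{\mathrm{mod}}(\phi;0)$.

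For the final sentence of the statement I would use $H(0,0)=\mathbf{H}$: via \eqref{key example Lform} this shows that both $\mathbb{C}\mathfrak{g}_-(0)$ and the subalgebra $\mathfrak{g}_{0,0}(0)\subset\mathfrak{csp}(\mathbb{C}\mathfrak{g}_{-1}(0))$ depend only on $\mathbf{H}$, while Corollary \ref{matrix representation prop} together with \eqref{first order constancy criterion alt a} shows that $\mathfrak{g}_{0,\pm2}(0)$ are determined by $\mathbf{H}$ and $\mathbf{S}_\zeta(0)$. The embedded kernel is pinned down by \eqref{iota phi map}, namely $\iota_\phi(e(0))=(S^{0,2}(\phi(0)),\Omega(\phi)(0))$ with $S^{0,2}(\phi(0))=\mathbf{S}_\zeta(0)$ already recorded; the conjugate block $\iota_\phi(\overline{\mathcal{K}}_0)$ follows by applying $\sigma_\phi$. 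Hence the modified symbol at $0$ along $\phi$ is represented exactly by $\mathbf{H}$, $\mathbf{S}_\zeta(0)$, and $\Omega(\phi(0))$, as claimed.

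It then remains to show that the normalized data are functions of $\mathbf{H}$, $\mathbf{S}_\zeta(0)$, $\mathbf{S}_{\zeta\zeta}(0)$. I would observe that $N_{\mathfrak{gl}(2,\mathbb{C})}(\mathfrak{g}_{0,-2}(0))$, the complement $N_{0,2}$, and the projection $\pi_{N_{0,2}}$ are built only from $\mathfrak{g}_{0,\pm2}(0)$, hence from $\mathbf{H}$ and $\mathbf{S}_\zeta(0)$. With $O^N_{0,-2}(\phi(0))$ set to vanish and the raw obstruction equal to $e(\iota(e))(0)=\mathbf{S}_{\zeta\zeta}(0)$, the normalization \eqref{rank 1 normalization} reads $O^N_{0,2}(\phi(0))(e,\mathbf{S}_\zeta(0))=\pi_{N_{0,2}}(\mathbf{S}_{\zeta\zeta}(0))$ and simultaneously selects $\Omega(\phi(0))\in N_{\mathfrak{gl}(2,\mathbb{C})}(\mathfrak{g}_{0,-2}(0))$, unique modulo $\mathfrak{g}_{0,0}(0)$, as the transformation absorbing the complementary part of $\mathbf{S}_{\zeta\zeta}(0)$. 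Since $\mathfrak{g}_{0,0}(0)$ is itself a summand of $\mathfrak{g}_0^{\mathrm{mod}}(\phi;0)$, this residual ambiguity in $\Omega(\phi(0))$ leaves the modified symbol unchanged, so both it and the obstructions $O^N_{0,\pm2}(\phi(0))$ depend only on the three matrices.

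The step I expect to require the most care is justifying that \eqref{rank 1 normalization} is genuinely solvable with the stated uniqueness: one must verify that $\mathfrak{csp}(\mathbb{C}\mathfrak{g}_{-1}(0))_{0,2}/\mathfrak{g}_{0,2}(0)=N_{0,2}\oplus\{B\mathbf{S}_\zeta(0)+\mathbf{S}_\zeta(0)B^T:B\in N_{\mathfrak{gl}(2,\mathbb{C})}(\mathfrak{g}_{0,-2}(0))\}$ is a genuine direct sum, so that $\pi_{N_{0,2}}$ is well defined and $\Omega(\phi(0))$ is determined modulo $\mathfrak{g}_{0,0}(0)$. This is the one place where $\mathbf{S}_\zeta(0)\neq0$, i.e. $2$-nondegeneracy, must be invoked, to control the image and kernel of the symmetrization map $B\mapsto B\mathbf{S}_\zeta(0)+\mathbf{S}_\zeta(0)B^T$.
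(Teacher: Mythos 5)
Your proposal is correct and follows essentially the same route as the paper, which derives this corollary directly from the preceding computations: $e(\iota(\overline e))(0)=0$, $e(\iota(e))(0)=\mathbf{S}_{\zeta\zeta}(0)$, the choice $O^N_{0,-2}(\phi(0))=0$ reducing $\Omega(\phi(0))$ to $N_{\mathfrak{gl}(2,\mathbb{C})}(\mathfrak{g}_{0,-2}(0))$, and formula \eqref{rank 1 normalization}. The direct-sum issue you flag at the end is real but is handled in the paper case by case in Proposition \ref{table2prop} and Table \ref{normalization conditions Table}, where the complements $N_{0,2}$ are exhibited explicitly for each bigraded symbol representative.
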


Matrices $\mathbf{H}$ and $\mathbf{S}_\zeta(0)$ represent the bigraded symbols as in Corollary \ref{matrix representation prop}, and all their possible bigraded symbol equivalence class representatives can be obtained \cite[formulas (17)-(24)]{SykesHomogeneous} (we incorporate (21) as a special case of (22)). However, it turns out that to obtain nicer normal forms, we need to change the convention for \cite[f. (18) and (20)]{SykesHomogeneous} and obtain the third and fifth row of Table \ref{normalization conditions Table}. Note that \cite[f. (18)]{SykesHomogeneous}, which has two formulas depending on $
\epsilon$, is transformed by the matrix $\left(\begin{smallmatrix} \frac{1}{\sqrt{2}}& \frac{1}{\sqrt{2}}\\ \frac{i}{\sqrt{2}}& \frac{-i}{\sqrt{2}}\end{smallmatrix}\right)$ to a formula in the third row in the $\epsilon=1$ case and the fifth row in the $\epsilon=-1$ case. Analogously, \cite[f. (20)]{SykesHomogeneous} is transformed by the matrix $\left(\begin{smallmatrix} \frac{-1-i}{2}& \frac{-1+i}{2}\\ \frac{-1-i}{2}& \frac{1-i}{2}\end{smallmatrix}\right)$ to a formula in the third row.

Let us fix the normalization conditions for these representatives.

\begin{proposition}\label{table2prop}
    The spaces $N_{0,2}$ comprised of matrices in the image of $\pi_{N_{0,2}}$ from Table \ref{normalization conditions Table} provide normalization conditions for all equivalence class representatives $\mathbf{H}$ and $S^{0,2}=\mathbf{S}_\zeta(0)$ of bigraded symbols of uniformly $2$-nondegenerate CR hypersurfaces in $\mathbb{C}^4.$
\end{proposition}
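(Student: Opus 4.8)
The plan is to verify, case by case across the finite list of bigraded symbol representatives in Table \ref{normalization conditions Table}, that the prescribed space $N_{0,2}$ really does furnish a complement as required by the decomposition
\[
\mathfrak{csp}(\mathbb{C}\mathfrak{g}_{-1}(0))_{0,2}/\mathfrak{g}_{0,2}(0)=N_{0,2}\oplus \left\{B\mathfrak{g}_{0,2}(0)+\mathfrak{g}_{0,2}(0)B^T:\ B\in N_{\mathfrak{gl}(2,\mathbb{C})}(\mathfrak{g}_{0,-2}(0))\right\}.
\]
Since $\mathbf{S}_\zeta(0)\neq 0$ parametrizes $\mathfrak{g}_{0,2}(0)=\mathrm{span}\{\mathbf{S}_\zeta(0)\}$, the target space $\mathfrak{csp}(\mathbb{C}\mathfrak{g}_{-1}(0))_{0,2}$ is the space of symmetric $2\times 2$ matrices (a $3$-dimensional complex space), and quotienting by $\mathfrak{g}_{0,2}(0)$ leaves a $2$-dimensional space. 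So for each row of the table I must (i) compute $N_{\mathfrak{gl}(2,\mathbb{C})}(\mathfrak{g}_{0,-2}(0))$ explicitly from its defining condition $B^T\mathbf{H}\overline{\mathbf{S}_{\zeta}(0)}\mathbf{H}^T+\mathbf{H}\overline{\mathbf{S}_{\zeta}(0)}\mathbf{H}^T B\in\mathrm{span}\{\mathbf{H}\overline{\mathbf{S}_\zeta(0)}\mathbf{H}^T\}$, (ii) compute the image subspace $W:=\{B\mathbf{S}_\zeta(0)+\mathbf{S}_\zeta(0)B^T:B\in N_{\mathfrak{gl}(2,\mathbb{C})}(\mathfrak{g}_{0,-2}(0))\}$ modulo $\mathfrak{g}_{0,2}(0)$, and (iii) confirm that the $N_{0,2}$ listed in the table is a genuine vector-space complement to $W$ in the $2$-dimensional quotient.

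First I would set up the finite enumeration: the representatives $(\mathbf{H},\mathbf{S}_\zeta(0))$ are exactly those of \cite[formulas (17)-(24)]{SykesHomogeneous} with the two sign-normalization substitutions described in the paragraph preceding the proposition (the $\mathrm{diag}(1/\sqrt2,\ldots)$ and the $\tfrac{-1\pm i}{2}$ change-of-basis matrices applied to formulas (18) and (20)). For each representative the matrix $\mathbf{H}$ is a fixed explicit Hermitian matrix and $\mathbf{S}_\zeta(0)$ is a fixed explicit symmetric matrix, so steps (i)-(iii) reduce to finite-dimensional linear algebra: the normalizer condition becomes a linear system on the four entries of $B$, the image $W$ is the span obtained by pushing a basis of $N_{\mathfrak{gl}(2,\mathbb{C})}(\mathfrak{g}_{0,-2}(0))$ through $B\mapsto B\mathbf{S}_\zeta(0)+\mathbf{S}_\zeta(0)B^T$, and complementarity is checked by exhibiting that $N_{0,2}\oplus W\oplus\mathfrak{g}_{0,2}(0)$ spans the full $3$-dimensional space of symmetric matrices, equivalently that a $3\times 3$ determinant formed from bases of the three pieces is nonzero.

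The main obstacle I anticipate is organizational rather than conceptual: the cases split according to the rank and the algebraic type of $\mathbf{S}_\zeta(0)$ relative to the signature of $\mathbf{H}$, and in the degenerate cases the normalizer $N_{\mathfrak{gl}(2,\mathbb{C})}(\mathfrak{g}_{0,-2}(0))$ can be larger, so the image $W$ may already fill the whole quotient (forcing $N_{0,2}=0$) while in other cases $W$ is one-dimensional (forcing $\dim N_{0,2}=1$). I must take care that the \emph{same} $N_{0,2}$ prescribed in the table works uniformly and that the sign-convention changes for rows three and five are compatible with the normalizer computation — this is precisely why the altered conventions were introduced, so I would verify that under the stated change-of-basis matrices the transformed $\mathbf{S}_\zeta(0)$ yields the $N_{0,2}$ of the third and fifth rows and that complementarity persists. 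Once the complementarity is confirmed in every row, the projection $\pi_{N_{0,2}}$ is well defined and the normalization condition $O^N_{0,2}(\phi(0))=\pi_{N_{0,2}}\circ O_{0,2}(\phi)(0)$ is legitimate, completing the proof.
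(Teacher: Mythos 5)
Your approach is essentially the paper's: for each row of Table \ref{normalization conditions Table} one computes $N_{\mathfrak{gl}(2,\mathbb{C})}(\mathfrak{g}_{0,-2}(0))=N_{\mathfrak{gl}(2,\mathbb{C})}(\mathrm{span}\{(\mathbf{H}^T)^{-1}\overline{S^{0,2}}\mathbf{H}^{-1}\})$ from its defining linear condition and then checks by finite-dimensional linear algebra that the listed $N_{0,2}$ complements $\mathrm{span}\{BS^{0,2}+S^{0,2}B^T\}$ (together with $\mathfrak{g}_{0,2}(0)$) inside the space of symmetric $2\times2$ matrices; your step (iii) as first stated — $N_{0,2}$ complements the image of $W$ in the $2$-dimensional quotient — is the correct formulation, though note that the later ``$3\times3$ determinant from bases of the three pieces'' phrasing needs care, since $W$ can lie entirely inside $\mathfrak{g}_{0,2}(0)$ (e.g.\ the third row, where $N_{0,2}$ is $2$-dimensional), so the three pieces need not be independent with dimensions summing to $3$.

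The one ingredient of the paper's proof you omit is realizability: the proposition quantifies over bigraded symbols \emph{of uniformly $2$-nondegenerate CR hypersurfaces in $\mathbb{C}^4$}, so one should confirm that each row of the table is actually attained by such a hypersurface rather than being a purely algebraic possibility. The paper disposes of this in one line by taking the $2$-nondegenerate model with $\mathbf{S}(\zeta)=\zeta S^{0,2}$, whose bigraded symbol was computed in Section \ref{sec bigraded symbol} to be represented exactly by $\mathbf{H}$ and $\mathbf{S}_\zeta(0)=S^{0,2}$. Adding that observation would make your argument complete.
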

\begin{table}
    \centering
    \begin{tabular}{|c|c|c|c|}\hline
     \To{7}\Bo{6} $\mathbf{H}$ & $S^{0,2}$& \parbox{3.5cm}{\centering $\pi_{N_{0,2}}$ applied to\\ $\left(\begin{matrix}
          f_1&f_2\\f_2&f_3
      \end{matrix}\right)$} & $N_{\mathfrak{gl}(2,\mathbb{C})}(\mathfrak{g}_{0,-2}(0))$ \\\hline
     \To{5}\Bo{4} $\left(\begin{matrix}
          1&0\\0&\epsilon
      \end{matrix}\right)$
      &$\left(\begin{matrix}
          1&0\\0&\lambda
      \end{matrix}\right)$&$\left(\begin{matrix}
           0&0\\ 0&f_3-\lambda f_1
      \end{matrix}\right)$& $\left(\begin{matrix}
          a&-\lambda b\\b&a
      \end{matrix}\right)$\\\hline
     \To{5}\Bo{4} 
     $\left(\begin{matrix}
          0&1\\1&0
      \end{matrix}\right)$
      &$\left(\begin{matrix}
          1&0\\0&e^{i\theta}
      \end{matrix}\right)$ &$\left(\begin{matrix}
           0&0\\ 0&f_3-e^{i\theta}f_1
      \end{matrix}\right)$& $\left(\begin{matrix}
          a&-e^{i\theta} b\\b&a
      \end{matrix}\right)$\\\hline
     \To{5}\Bo{4} 
     $\left(\begin{matrix}
          1&0\\0&\epsilon
      \end{matrix}\right)$
      &$\left(\begin{matrix}
          0&1\\1&0
      \end{matrix}\right)$ &$\left(\begin{matrix}
          f_1&0\\ 0&f_3
      \end{matrix}\right)$&$\left(\begin{matrix}
          a&0\\ 0&b
      \end{matrix}\right)$\\\hline
     \To{5}\Bo{4} 
      $\left(\begin{matrix}
          1&0\\0&\epsilon
      \end{matrix}\right)$
      &$\left(\begin{matrix}
          1&0\\0&0
      \end{matrix}\right)$ &$\left(\begin{matrix}
          0&0\\ 0&f_3
      \end{matrix}\right)$&$\left(\begin{matrix}
          a&0\\ c&b
      \end{matrix}\right)$\\\hline
     \To{5}\Bo{4} 
      $\left(\begin{matrix}
          0&1\\1&0
      \end{matrix}\right)$
      &$\left(\begin{matrix}
          0&1\\1&0
      \end{matrix}\right)$ &$\left(\begin{matrix}
          f_1&0\\ 0&f_3
      \end{matrix}\right)$&$\left(\begin{matrix}
          a&0\\ 0&b
      \end{matrix}\right)$\\\hline
      \To{5}\Bo{4} 
      $\left(\begin{matrix}
          0&1\\1&0
      \end{matrix}\right)$
      &$\left(\begin{matrix}
          1&1\\1&0
      \end{matrix}\right)$ &$\left(\begin{matrix}
          0&0\\ 0&f_3
      \end{matrix}\right)$&$\left(\begin{matrix}
          a&b\\ 0&a-2b
      \end{matrix}\right)$\\\hline
      \To{5}\Bo{4} 
      $\left(\begin{matrix}
          0&1\\1&0
      \end{matrix}\right)$
      &$\left(\begin{matrix}
          1&0\\0&0
      \end{matrix}\right)$ &$\left(\begin{matrix}
          0&f_2\\ f_2&f_3
      \end{matrix}\right)$&$\left(\begin{matrix}
          a&b\\ 0&c
      \end{matrix}\right)$\\\hline
     \end{tabular}
    \caption{Normalization conditions $N_{0,2}$ corresponding to bigraded symbols of CR hypersurfaces in $\mathbb{C}^4$. Parameters $a,b,c$ are complex, $\lambda,\theta$ are real, $\lambda> 1$,$0<\theta< \pi$, and $\epsilon=\pm1$.}
    \label{normalization conditions Table}
\end{table}
\begin{proof}
    For each case in Table \ref{normalization conditions Table}, we can straightforwardly compute $N_{\mathfrak{gl}(2,\mathbb{C})}(\mathfrak{g}_{0,-2}(0))\allowbreak=N_{\mathfrak{gl}(2,\mathbb{C})}(\mathrm{span}\{(\mathbf{H}^T)^{-1}\overline{S^{0,2}}\mathbf{H}^{-1}\})$ for the given representatives of the equivalence classes of bigraded symbol. Then we can check that claimed $N_{0,2}$ provides complement to \[\mathrm{span}\left\{BS^{0,2}+S^{0,2}B^T:\ B\in N_{\mathfrak{gl}(2,\mathbb{C})}(\mathrm{span}\{(\mathbf{H}^T)^{-1}\overline{S^{0,2}}\mathbf{H}^{-1}\}\right\}\]
    in the space of symmetric $2\times 2$ matrices.

    As for the realizibility of the bigraded symbols, it suffices to consider $2$-nondegenerate model with $\mathbf{S}(\zeta)=\zeta S^{0,2},$ for which we computed that the bigraded symbol is represented precisely by $\mathbf{H}$ and $\mathbf{S}_\zeta(0)=S^{0,2}.$
\end{proof}

In \cite{GKS2024}, we show that the $2$-nondegenerate models with
\begin{align}\label{new S2}
(\mathbf{S}(\zeta),0)=\ln\left(\exp\left((S^{0,2}\zeta,\Omega\zeta)\right)\exp\left((0,-\Omega\zeta)\right)\right)=
\left(\left(\begin{matrix}f_1&f_2\\f_2 &f_3\end{matrix}\right),0
\right)
\end{align}
where $\left(S^{0,2}\zeta,\Omega\zeta\right),(0,-\Omega\zeta)\in \mathfrak{csp}(\mathbb{C}\mathfrak{g}_{-1}(0))_{0,2}\oplus\mathfrak{csp}(\mathbb{C}\mathfrak{g}_{-1}(0))_{0,0}$ realize the modified symbols with $\Omega\in N_{\mathfrak{gl}(2,\mathbb{C})}(\mathrm{span}\{(\mathbf{H}^T)^{-1}\overline{S^{0,2}}\mathbf{H}^{-1}\})$ and bigraded symbol represented by $\mathbf{H}$ and $S^{0,2}$. Indeed,
\begin{align}
S_{\zeta\zeta}(0,0)&=\mathbf{H}^T\mathbf{S}_{\zeta\zeta}(0)\mathbf{H}=\frac12\mathbf{H}^T\left(\Omega S^{0,2}+S^{0,2}\Omega^T\right)\mathbf{H},
\end{align}
follows from the expansion 
\begin{align}\label{S2 formula}
\mathbf{S}(\zeta)=S^{0,2}\zeta+(\Omega S^{0,2}+S^{0,2}\Omega^T)\zeta^2+O(\zeta^3)
\end{align}
derived using the Baker-Campbell-Hausdorff formula. Taking $B=\Omega$ in \eqref{equiv class O} thus ensures $O^N_{0,-2}(\phi(0))=O^N_{0,2}(\phi(0))=0$ and we have a realization that is constant up to first order.

This together with Table \ref{normalization conditions Table} allows us to classify all possible modified symbols of uniformly $2$-nondegenerate hypersurfaces in $\mathbb{C}^4$ and determine the intersection $\mathrm{CU}(\mathbf{H})\cap G_{0,0}$ of
\begin{align}\label{CUdef}
 \mathrm{CU}(\mathbf{H})=
\left\{(u,U)\in \mathbb{C}^*\times \mathrm{GL}(2,\mathbb{C})/(\mathbb{Z}_2=\{(1,\mathrm{Id}),(-1,-\mathrm{Id})\}) \,\left|\, u\overline{u}^{-1}U^T\mathbf{H}\overline{U}=U\right.\right\}   \end{align}
and 
\begin{align}\label{G00def}
G_{0,0}=
\left\{(u,U)\in \mathbb{C}^*\times \mathrm{GL}(2,\mathbb{C})/\mathbb{Z}_2\,\left|\,\parbox{8.5cm}{$\mathrm{span}\{U^{-1}\mathbf{S}_\zeta(0)(U^{-1})^T
\}=\mathrm{span}\{\mathbf{S}_\zeta(0)\}$,\\
$\mathrm{span}\{U^T\mathbf{H}\overline{\mathbf{S}_\zeta(0)}\mathbf{H}^TU\}=\mathrm{span}\{\mathbf{H}\overline{\mathbf{S}_\zeta(0)}\mathbf{H}^T\}$} \right.\right\}.
\end{align}
\begin{proposition}\label{degree 0 symmetry bound}
    The entries of Table \ref{7-d. symbol table} represent all classes of modified  symbols of uniformly $2$-nondegenerate CR hypersurfaces in $\mathbb{C}^4$, where $\mathbf{H}$ and $S^{0,2}=\mathbf{S}_\zeta(0)$ represent the equivalence class of bigraded symbols and $\Omega=\Omega(\phi(0))$ is unique up to a multiple of the grading element and elements of the given $\mathfrak{g}_{0,0}^\prime=\mathfrak{g}_{0,0}\cap \mathfrak{gl}(2,\mathbb{C})$. Moreover, real forms given in the table represent the Lie algebra of $\mathrm{CU}(\mathbf{H})\cap G_{0,0}$.  The Lie algebra of the stabilizer of $\Omega$ in $\mathrm{CU}(\mathbf{H})\cap G_{0,0}$ is specified by the \emph{annihilator of $\Omega$} condition in Table \ref{7-d. symbol table}, and this subgroup acts by symmetries on the $2$-nondegenerate model given by \eqref{new S2}.
\end{proposition}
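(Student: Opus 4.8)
The plan is to reduce the classification to a finite case analysis over the seven bigraded symbol classes and, within each class, to a classification of the residual datum $\Omega$. By Corollary~\ref{matrix representation prop}, the bigraded symbol at $0$ is determined by $\mathbf{H}$ and $S^{0,2}=\mathbf{S}_\zeta(0)$, whose equivalence class representatives are precisely the first two columns of Table~\ref{normalization conditions Table}, taken from \cite[(17)--(24)]{SykesHomogeneous}. By Corollary~\ref{mod symbol determination corollary}, once the normalization condition $N_{0,2}$ of Table~\ref{normalization conditions Table} is fixed, the modified symbol is further represented by $\Omega(\phi(0))$, which \eqref{rank 1 normalization} recovers from $\mathbf{S}_{\zeta\zeta}(0)$ up to the grading element and $\mathfrak{g}_{0,0}'$. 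Since $\mathbf{S}_{\zeta\zeta}(0)$ ranges over all symmetric matrices, $\Omega$ ranges over $N_{\mathfrak{gl}(2,\mathbb{C})}(\mathfrak{g}_{0,-2}(0))$ modulo this ambiguity, so the problem becomes: (i) compute $\mathrm{CU}(\mathbf{H})\cap G_{0,0}$ and $\mathfrak{g}_{0,0}'$ for each row, and (ii) classify the orbits of $\Omega$ under $\mathrm{CU}(\mathbf{H})\cap G_{0,0}$.

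For step (i), I would solve the defining equations \eqref{CUdef} and \eqref{G00def} directly for each given pair $(\mathbf{H},S^{0,2})$. The condition $u\overline{u}^{-1}U^T\mathbf{H}\overline{U}=U$ confines $U$ to the conformal unitary group of $\mathbf{H}$, while the two span conditions of $G_{0,0}$ force $U$ to preserve $\mathrm{span}\{S^{0,2}\}$ and $\mathrm{span}\{\mathbf{H}\overline{S^{0,2}}\mathbf{H}^T\}$; solving these constraints yields the Lie algebra, whose real form I read off by inspection and match against Table~\ref{7-d. symbol table}. The subalgebra $\mathfrak{g}_{0,0}'=\mathfrak{g}_{0,0}\cap\mathfrak{gl}(2,\mathbb{C})$ arises as the linearization of the same span conditions, i.e.\ the $L\in\mathfrak{gl}(2,\mathbb{C})$ preserving both spans, and these computations run in parallel with those already recorded in the last column of Table~\ref{normalization conditions Table}.

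For step (ii), I would use the explicit $N_{\mathfrak{gl}(2,\mathbb{C})}(\mathfrak{g}_{0,-2}(0))$ from Table~\ref{normalization conditions Table} to write $\Omega$ in coordinates, then let $\mathrm{CU}(\mathbf{H})\cap G_{0,0}$ act, reducing the free parameters to the canonical values of Table~\ref{7-d. symbol table} and checking that the representatives are pairwise inequivalent. Realizability is not a separate burden: the model \eqref{new S2} built from $(S^{0,2},\Omega)$ via the Baker--Campbell--Hausdorff expansion \eqref{S2 formula} has bigraded symbol $(\mathbf{H},S^{0,2})$, realizes $\Omega$, and has vanishing first-order obstructions by the choice $B=\Omega$ in \eqref{equiv class O}, as explained before the proposition. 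For the stabilizer statement, I would impose that a group element of $\mathrm{CU}(\mathbf{H})\cap G_{0,0}$ fix the chosen $\Omega$ modulo the grading element and $\mathfrak{g}_{0,0}'$; linearizing this constraint yields exactly the annihilator of $\Omega$ condition in Table~\ref{7-d. symbol table}. That this stabilizer acts by symmetries of \eqref{new S2} would then follow from naturality of the construction: an element of $\mathrm{CU}(\mathbf{H})$ fixing the pair $(S^{0,2},\Omega)$ commutes with the exponential and BCH operations defining $\mathbf{S}(\zeta)$, hence preserves the defining equation \eqref{gen def fun}.

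The hard part will be step (ii): the orbit classification of $\Omega$ must be carried out separately for each bigraded class, where the structures of $\mathrm{CU}(\mathbf{H})\cap G_{0,0}$ and of the normalization space $N_{0,2}$ differ substantially (the diagonal, off-diagonal/Jordan, and rank-one cases behave very differently), and one must both exhibit canonical representatives and prove their inequivalence. A secondary difficulty is verifying that the stabilizer genuinely acts by symmetries of the hypersurface \eqref{new S2}, rather than merely fixing its symbol, which requires tracking the group action through the explicit BCH-defined $\mathbf{S}(\zeta)$ and confirming invariance of \eqref{gen def fun}, \eqref{H from S2 formula}, and \eqref{S from S2 formula}.
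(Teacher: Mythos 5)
Your overall route coincides with the paper's: a case-by-case pass over the bigraded-symbol representatives of Table \ref{normalization conditions Table}, computing $\mathfrak{g}_{0,0}'$ and $N_{\mathfrak{gl}(2,\mathbb{C})}(\mathfrak{g}_{0,-2}(0))$, choosing $\Omega$-representatives in a complement, establishing realizability through the model \eqref{new S2}, and identifying the stabilizer of $\Omega$ as the symmetries. (Your equivariance argument for the last claim --- that an element fixing $(S^{0,2},\Omega)$ commutes with the Baker--Campbell--Hausdorff construction and hence preserves $\mathbf{S}(\zeta)$ --- is a reasonable self-contained substitute for the paper's citation of \cite[Proposition 4.3]{GKS2024}.)

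There is, however, one concrete gap in step (ii): you never specify how $\mathrm{CU}(\mathbf{H})\cap G_{0,0}$ acts on $\Omega$, and the obvious guess --- conjugation $\Omega\mapsto U^{-1}\Omega U$ on $\mathfrak{csp}(\mathbb{C}\mathfrak{g}_{-1}(0))_{0,0}$ --- is wrong and would fail to produce the table. An element $(u,U)$ generally rescales $S^{0,2}$ within its span, so to stay in the partial normal form one must compose with the reparametrization $g_{(u,U)}$ of $\zeta$ from \eqref{symgrouptransform}; since $\Omega$ is the coefficient of $\zeta$ in the $\mathfrak{csp}_{0,0}$-component, it picks up the extra scalar factor $1+g_{(u,U)}'(0)$. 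This factor is precisely what makes the normalization $\tau\geq 0$ achievable: in row 1, for instance, the identity component acts by $U=e^{i\theta}\mathrm{Id}$, which conjugates $\Omega$ trivially, and only the induced substitution $\zeta\mapsto e^{2i\theta}\zeta$ rotates $\tau$ by $e^{2i\theta}$ so that it can be made real and nonnegative. Without tracking this reparametrization your orbit classification of $\Omega$ would leave $\tau$ as an arbitrary complex parameter and the claimed uniqueness of the representatives would fail; this is exactly the point the paper's proof singles out.
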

\begin{table}[htbp]
    \centering
    \begin{tabular}{|c|c|c|c|c|}\hline
      \multirow{2}*{$\mathbf{H}$} & \multirow{2}*{$S^{0,2}$}& \multirow{2}*{$\mathfrak{g}_{0,0}^\prime$ }& \Bo{2}$\Re(\mathfrak{g}_{0,0}^\prime)\subset\mathfrak{g}_{0,0}^\prime$  & \multirow{2}*{  realizable $\Omega$ } \\\cdashline{4-4}
      &&&annihilator of $\Omega$&\\\hline
      \To{3}
     \multirow{2}*{$\left(\begin{matrix}
          1&0\\0&\epsilon
      \end{matrix}\right)$}
      &\multirow{2}*{$\left(\begin{matrix}
          1&0\\0&\lambda
      \end{matrix}\right)$ } &\multirow{2}*{$\left\{\left(\begin{matrix}
          a&0\\0&a
      \end{matrix}\right)\right\}$}& 
      $a=-\overline{a}$
      &\multirow{2}*{$\left(\begin{matrix}
          0&-\tau\lambda \\ \tau&0
      \end{matrix}\right)$}\\\cdashline{4-4}\Bo{2} 
      &&&$a=0$&\\\hline
     \To{3}
     \multirow{2}*{
     $\left(\begin{matrix}
          0&1\\1&0
      \end{matrix}\right)$}
      &
      \multirow{2}*{$\left(\begin{matrix}
          1&0\\0&e^{i\theta}
          \end{matrix}\right)$}
      &
      \multirow{2}*{$\left\{\left(\begin{matrix}
          a&0\\0&a
      \end{matrix}\right)\right\}$}
      & $a=-\overline{a}$ &\multirow{2}*{$\left(\begin{matrix}
          0&-\tau e^{i\theta}\\ \tau&0
      \end{matrix}\right)$}\\\cdashline{4-4}\Bo{2} &&&$a=0$&\\\hline
     \To{5} \Bo{4}
     $\left(\begin{matrix}
          1&0\\0&\epsilon
      \end{matrix}\right)$
      &$\left(\begin{matrix}
          0&1\\1&0
      \end{matrix}\right)$ &$\left\{\left(\begin{matrix}
          a&0\\0&b
      \end{matrix}\right)\right\}$& $a=-\overline{a}$, $b=- \overline{b}$&$\left(\begin{matrix}
          0&0\\ 0&0
      \end{matrix}\right)$\\\hline
     \To{3} 
      \multirow{2}*{$\left(\begin{matrix}
          1&0\\0&\epsilon
      \end{matrix}\right)$}
      &\multirow{2}*{$\left(\begin{matrix}
          1&0\\0&0
      \end{matrix}\right)$ }&\multirow{2}*{$\left\{\left(\begin{matrix}
          a&0\\0&b
      \end{matrix}\right)\right\}$}& $a=-\overline{a},b=-\overline{b}$&\multirow{2}*{$\left(\begin{matrix}
          0&0\\ \tau&0
      \end{matrix}\right)$}\\\cdashline{4-4}\To{2}\Bo{2}&&&$b=3a$&\\\hline
     \To{5}\Bo{4} 
      $\left(\begin{matrix}
          0&1\\1&0
      \end{matrix}\right)$
      &$\left(\begin{matrix}
          0&1\\1&0
      \end{matrix}\right)$ &$\left\{\left(\begin{matrix}
          a&0\\0&b
      \end{matrix}\right)\right\}$&$a=-\overline{b}$&$\left(\begin{matrix}
          0&0\\ 0&0
      \end{matrix}\right)$\\\hline
      \To{3}
      \multirow{2}*{$\left(\begin{matrix}
          0&1\\1&0
      \end{matrix}\right)$}
      &\multirow{2}*{$\left(\begin{matrix}
          1&1\\1&0
      \end{matrix}\right)$} &\multirow{2}*{$\left\{\left(\begin{matrix}
          a&0\\0&a
      \end{matrix}\right)\right\}$}&$a=-\overline{a}$ &\multirow{2}*{$\left(\begin{matrix}
          2\tau&\tau\\ 0&0
      \end{matrix}\right)$}\\\cdashline{4-4}\Bo{2} &&& $a=0$&\\\hline
      \To{5}\Bo{4} 
      $\left(\begin{matrix}
          0&1\\1&0
      \end{matrix}\right)$
      &$\left(\begin{matrix}
          1&0\\0&0
      \end{matrix}\right)$ &$\left\{\left(\begin{matrix}
          a&b\\0&c
      \end{matrix}\right)\right\}$&$a=-\overline{c},b=-\overline{b}$&$\left(\begin{matrix}
          0&0\\ 0&0
      \end{matrix}\right)$\\\hline
    \end{tabular}
    \caption{Realizable modified symbols of $7$-dimensional $2$-nondegenerate hypersurfaces. Parameters $a,b,c$ are complex, $\lambda,\theta,\tau$ are real, $\lambda> 1$,$0<\theta< \pi$, $\tau\geq 0$, and $\epsilon=\pm1$.}
    \label{7-d. symbol table}
\end{table}
\begin{proof}
For the given representatives of the bigraded symbol, we straightforwardly compute $\mathfrak{g}_{0,0}^\prime\subset N_{\mathfrak{gl}(2,\mathbb{C})}(\mathfrak{g}_{0,-2}(0))$. By comparing $\mathfrak{g}_{0,0}^\prime$ with $N_{\mathfrak{gl}(2,\mathbb{C})}(\mathfrak{g}_{0,-2}(0))$ from Table \ref{normalization conditions Table}, we can pick representatives of $\Omega$. Since the action of $\mathrm{CU}(\mathbf{H})\cap G_{0,0}$ gets normalized to identity on $S^{0,2}$ by reparametrization $g_U$, we need to take it into account when computing the action on $\Omega$, which implies that we can always rescale $\Omega$ by complex unit number to make $\tau\geq 0.$  As the remainder of $\mathrm{CU}(\mathbf{H})\cap G_{0,0}$ preserves $\Omega$, it is a symmetry as a consequence of \cite[Proposition 4.3]{GKS2024}. This provides the claimed conditions.
\end{proof}
Thus we get the following corollary of Proposition \ref{degree 0 symmetry bound} when we take into account that the grading element is always a symmetry of a $2$-nondegenerate model and thus the Lie algebra of symmetries is weighted graded by its eigenspaces.

\begin{corollary}\label{C4 degree zero bound}
    For modified symbols from Table \ref{7-d. symbol table}, the $2$-nondegenerate models given by \eqref{new S2} have maximal dimension of weighted degree zero isotropy, among all other $2$-nondegenerate CR hypersurfaces with this modified symbol (at a point).
\end{corollary}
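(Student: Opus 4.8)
The plan is to establish Corollary \ref{C4 degree zero bound} by sandwiching the weighted degree zero isotropy of an arbitrary $2$-nondegenerate hypersurface between a uniform upper bound and the value attained by the distinguished model \eqref{new S2}. Since the grading element is an infinitesimal symmetry of every $2$-nondegenerate model, the isotropy algebra at the base point decomposes into weighted eigenspaces, so its weighted degree zero part is a well-defined subalgebra, and it suffices to bound the dimension of this part.

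For the upper bound, I would take any $2$-nondegenerate CR hypersurface $M$ realizing the prescribed modified symbol at a point $p$ and place it in the partial normal form of \cite[Theorem 6.5]{GKS2024} at $p$. Every weighted degree zero infinitesimal symmetry of $M$ generates a one-parameter family of weighted degree zero local coordinate changes preserving both $M$ and, being of degree zero, its modified symbol at $p$; hence it lies in the residual normalizing freedom left after fixing the modified symbol. By Corollary \ref{mod symbol determination corollary} that symbol is encoded by $\mathbf{H}$, $\mathbf{S}_\zeta(0)$, and $\Omega(\phi(0))$, and by \cite[Theorem 6.5]{GKS2024} the residual freedom is the finite dimensional Lie subgroup of $\mathrm{CU}(\mathbf{H})\cap G_{0,0}$ preserving these data, namely the stabilizer of $\Omega$ inside $\mathrm{CU}(\mathbf{H})\cap G_{0,0}$ described by the annihilator of $\Omega$ condition in Table \ref{7-d. symbol table}. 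Consequently the weighted degree zero isotropy of $M$ is contained in the Lie algebra of this stabilizer, giving the desired uniform bound on its dimension across all hypersurfaces sharing the modified symbol.

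It then remains to verify that the model \eqref{new S2} attains this bound, which is precisely the final assertion of Proposition \ref{degree 0 symmetry bound}: the full stabilizer of $\Omega$ in $\mathrm{CU}(\mathbf{H})\cap G_{0,0}$ acts by symmetries on \eqref{new S2}. Thus its weighted degree zero isotropy is at least, and by the preceding bound exactly, the Lie algebra of that stabilizer, so no other $2$-nondegenerate hypersurface with the same modified symbol can have larger weighted degree zero isotropy, proving maximality.

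The main obstacle I anticipate is the upper bound step, specifically justifying that a weighted degree zero symmetry is forced to preserve the full modified symbol datum $\Omega$ (equivalently $\mathbf{S}_{\zeta\zeta}(0)$) and therefore lands in the stabilizer of $\Omega$ rather than merely in $\mathrm{CU}(\mathbf{H})\cap G_{0,0}$. This I would obtain from the frame independence of $\iota$, $\iota_\phi$, and the obstructions $O_{0,\pm2}$ established in \cite{GKS2024}, together with the transformation rule \eqref{equiv class O}, which records exactly how degree zero reparametrizations move $\Omega$ and thereby pins down which of them can fix $M$.
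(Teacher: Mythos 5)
Your proposal is correct and follows essentially the same route as the paper, which derives the corollary in one sentence from Proposition \ref{degree 0 symmetry bound} together with the observation that the grading element forces the symmetry algebra to be weighted graded; your sandwich argument (degree zero isotropy lands in the stabilizer of $\Omega$ inside $\mathrm{CU}(\mathbf{H})\cap G_{0,0}$, and the model \eqref{new S2} realizes that whole stabilizer) is exactly the implicit content of that sentence. The upper-bound step you flag as the main obstacle is indeed the part the paper leaves tacit, and your plan to close it via the frame-independence of the modified symbol and the transformation rule \eqref{equiv class O} is the intended justification.
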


\section{A complete normal form for models in \texorpdfstring{$\mathbb{C}^{4}$}{C\^4}}\label{The equivalence problem for models}\label{A complete normal form}

Here we prove Theorem \ref{main theorem} and achieve a complete normal form for models in $\mathbb{C}^{4}$. Recall Theorem \cite[Theorem 6.5]{GKS2024} gives a partial normal form by fixing  $\mathbf{H}$, $\mathbf{S}_\zeta(0)$ and $\mathbf{S}_{\zeta\zeta}(0)$ (uniquely defined up to a group action of $\mathrm{CU}(\mathbf{H})\cap G_{0,0}$ that we know from Proposition \ref{degree 0 symmetry bound}) and fixing the first nonzero holomorphic function $\mathbf{S}(\zeta)_{(i,j)}=\zeta$ in the ordering $(1,1)<(1,2)<(2,2)$ for the positions $(i,j)$. More precisely, for $(u,U)\in \mathrm{CU}(\mathbf{H})\cap G_{0,0}$, there is a unique local biholomorphism $g_{(u,U)}:\mathbb{C}\to \mathbb{C}$ fixing $0$ such that for the change of coordinates
\begin{equation}
\label{symgrouptransform}
   (w,z,\zeta)\mapsto(u^{-2}w,(z_1,z_2)u^{-1}U^T,\zeta+g_{u,U}(\zeta))
\quad\quad\forall\, (u,U)\in \mathrm{CU}(\mathbf{H})\cap G_{0,0}, 
\end{equation}
the transformed
    \begin{align}
        \tilde{\mathbf{H}}&=u\overline{u}^{-1}U^T\mathbf{H}\overline{U},\\
        \tilde{\mathbf{S}}(\zeta)&=U^{-1}\mathbf{S}(g_{(u,U)}(\zeta))(U^{-1})^T
    \end{align}
are still in the partial normal form, i.e., $\tilde{\mathbf{S}}(\zeta)_{(i,j)}=\zeta$ holds for the first nonzero position $(i,j)$. To derive a complete normal form for $2$-nondegenerate models in $\mathbb{C}^4$, we need to apply the action of $\mathrm{CU}(\mathbf{H})\cap G_{0,0}$ to normalize terms in the series expansion of $\mathbf{S}(\zeta)$ in a prescribed way. We do this until the subgroup of $\mathrm{CU}(\mathbf{H})\cap G_{0,0}$ preserving these additional normalizations consists only of symmetries of the normalized defining equation. The normal subgroup $\{(u,\pm u\mathrm{Id})\,|\,u>0\}$ acts on every model by symmetries, so for simplicity we may reduce to finding subgroups in the quotient group
\begin{align}\label{CU quotient}
\left(\mathrm{CU}(\mathbf{H})\cap G_{0,0}\right)/\{(u,\pm u\mathrm{Id})\,|\,u>0\}\cong\{(u,U)\in G_{0,0}\,|\,u=1\mbox{ or }u= i\}/\{(1,\pm\mathrm{Id})\}.
\end{align}
Therefore, we  can separate our results according to dimension of symmetries that remain in \eqref{CU quotient} when the process completes. We first list our results in the following $4$ propositions that will be proven in Subsections \ref{row cases subsection a}, \ref{row cases subsection b}, \ref{row cases subsection c}, \ref{row cases subsection d}, and \ref{row cases subsection e}.

Let us emphasize that the term \emph{complete normal} form as it is usually used in the literature does not mean that the action of $\mathrm{CU}(\mathbf{H})\cap G_{0,0}$ is completely exhausted, but that all free parameters in the normal form are identified that can be, subject to further conditions depending on an action of a discrete subgroup $\mathrm{CU}(\mathbf{H})\cap G_{0,0}$. Consequently, it is reasonable to present the subalgebra of $\mathfrak{u}(\mathbf{H})\cap \mathfrak{g}_{0,0}$ consisting of infinitesimal symmetries that in our cases can take dimension $3$ (i.e., maximal), $2$, $1$, and $0$. 
Note that these infinitesimal symmetries do not comprise a model's full isotropy subalgebra, as, in particular, the grading element provides an additional dimension of symmetries for every model and positively weighted isotropy can also appear. The discrete action appearing for our complete normal forms is characterized by the following remark.

\begin{remark}\label{discrete normalization remark}
    We prescribe the complete normal form only up to certain discrete group actions, because specifying it further entails fixing ad hoc choices that readers can easily make, perhaps even fixing these choices in a manner informed by some applications. In \S\ref{row cases subsection a}-\ref{row cases subsection e} we describe how these discrete groups act on the terms in the expansion of $\mathbf{S}(\zeta)$ that one normalizes with the action. Generally, these actions have the form $c\zeta^\mu\mapsto e^{i\theta \pi}c\zeta^\mu$ for some $\theta\in\mathbb{Q}$, generating a finite set of possible values for how the coefficient of $c\zeta^\mu$ is transformed.
\end{remark}

\begin{proposition}\label{table4prop3}
    A $2$-nondegenerate model in $\mathbb{C}^4$ with $3$-dimensional subalgebra of $\mathfrak{u}(\mathbf{H})\cap \mathfrak{g}_{0,0}$ consisting of infinitesimal symmetries is locally biholomorphicaly equivalent to the $2$-nondegenerate model with $\mathbf{H}=\left(\begin{matrix}
          0&1\\1&0
      \end{matrix}\right)$ and $\mathbf{S}(\zeta)= \left(\begin{matrix}
          \zeta&0\\0&0
      \end{matrix}\right)$, which is Type \Rmnum{7} in Table \ref{table of defining equations}. The subalgebra of $\mathfrak{u}(\mathbf{H})\cap \mathfrak{g}_{0,0}$ consisting of infinitesimal symmetries is
      \[
\left\{\left.
\left(
\begin{array}{cc}
-\theta_2+i\theta_1 & \theta_3 \\
0 & \theta_2+i\theta_1
\end{array}
\right)
\,\right|\,
\theta_j\in \mathbb{R}
\right\}.
\]
\end{proposition}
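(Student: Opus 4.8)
The plan is to first pin down the modified symbol by a dimension count, and then to integrate the invariance equations forced by the symmetries. Since any infinitesimal symmetry in $\mathfrak{u}(\mathbf{H})\cap\mathfrak{g}_{0,0}$ preserves the modified symbol, the symmetry subalgebra is contained in the stabilizer of $\Omega$ inside $\mathrm{CU}(\mathbf{H})\cap G_{0,0}$, whose Lie algebra is recorded in the fourth column of Table \ref{7-d. symbol table}. Scanning that table, the dimension of this stabilizer never exceeds $3$, and the value $3$ is attained only in the last row, where $\mathbf{H}=\left(\begin{smallmatrix}0&1\\1&0\end{smallmatrix}\right)$, $S^{0,2}=\mathbf{S}_\zeta(0)=\left(\begin{smallmatrix}1&0\\0&0\end{smallmatrix}\right)$ and $\Omega=0$, whence $\mathbf{S}_{\zeta\zeta}(0)=0$ by \eqref{new S2}. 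So the hypothesis forces this modified symbol, and I may assume the partial normal form of \cite[Theorem 6.5]{GKS2024} with $\mathbf{S}(\zeta)_{(1,1)}=\zeta$.

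For this symbol the stabilizer of $\Omega$ is the entire $3$-dimensional group $\mathrm{CU}(\mathbf{H})\cap G_{0,0}$, so a $3$-dimensional symmetry subalgebra must coincide with all of $\mathfrak{u}(\mathbf{H})\cap\mathfrak{g}_{0,0}$; equivalently, every element of the real form in Table \ref{7-d. symbol table} acts by symmetries. Writing $\mathbf{S}(\zeta)=\left(\begin{smallmatrix}\zeta & S_{12}(\zeta)\\ S_{12}(\zeta)& S_{22}(\zeta)\end{smallmatrix}\right)$ with $S_{12}(0)=S_{22}(0)=0$, I would linearize the invariance relation $U^{-1}\mathbf{S}(g_{(u,U)}(\zeta))(U^{-1})^{T}=\mathbf{S}(\zeta)$ coming from \eqref{symgrouptransform} along the one-parameter subgroups generated by $X=\left(\begin{smallmatrix}\alpha & i\tau\\ 0& -\overline\alpha\end{smallmatrix}\right)$. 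This gives
\[
h(\zeta)\,\mathbf{S}'(\zeta)=X\mathbf{S}(\zeta)+\mathbf{S}(\zeta)X^{T},
\]
where the infinitesimal reparametrization rate $h(\zeta)$ is fixed by the $(1,1)$ component together with the normalization $\mathbf{S}(\zeta)_{(1,1)}=\zeta$, namely $h(\zeta)=2\alpha\zeta+2i\tau S_{12}(\zeta)$.

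The core of the argument is then an elementary integration of this overdetermined system. Specializing to the diagonal real scalings $\tau=0$, $\alpha\in\mathbb{R}$ (which lie in the real form), the $(2,2)$ and $(1,2)$ components reduce to $\zeta S_{22}'(\zeta)=-S_{22}(\zeta)$ and $\zeta S_{12}'(\zeta)=0$; combined with holomorphy at $0$ and $S_{12}(0)=S_{22}(0)=0$, these force $S_{12}\equiv S_{22}\equiv 0$. Hence $\mathbf{S}(\zeta)=\left(\begin{smallmatrix}\zeta&0\\0&0\end{smallmatrix}\right)$, which is the Type \Rmnum{7} model, establishing the equivalence claim. For the final assertion, Proposition \ref{degree 0 symmetry bound} applied to the last row (with $\Omega=0$) shows that all of $\mathrm{CU}(\mathbf{H})\cap G_{0,0}$ consists of symmetries of this model and that the infinitesimal symmetries are exactly the $3$-dimensional real form $\{X\}$ above, which, after renaming the real parameters as $\theta_1,\theta_2,\theta_3\in\mathbb{R}$, is the displayed matrix algebra. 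I expect the only delicate point to be the correct bookkeeping of the induced reparametrization $g_{(u,U)}$ (equivalently of the rate $h(\zeta)$) in the invariance equations, since everything after that set-up is a short computation.
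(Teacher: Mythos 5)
Your argument is correct and follows essentially the same route as the paper: the dimension count in Table \ref{7-d. symbol table} isolates the last row, and your infinitesimal invariance equation for the real scaling subgroup $\mathrm{diag}(e^{-\theta_2},e^{\theta_2})$ is precisely the linearization of the coefficient action $\gamma_1\mapsto e^{2i\theta_1(\mu-1)-(2\mu+2)\theta_2}\gamma_1$, $\gamma_2\mapsto e^{2i\theta_1(\mu-1)-2\mu\theta_2}\gamma_2$ that \S\ref{row cases subsection e} uses to show any nonzero higher-order term in $\mathbf{S}(\zeta)$ cuts the continuous isotropy below dimension $3$. One small inaccuracy: the parenthetical claim that $\Omega=0$ forces $\mathbf{S}_{\zeta\zeta}(0)=0$ ``by \eqref{new S2}'' is not justified at that stage for a general model with this modified symbol, since the obstruction $O^N_{0,2}$ to first order constancy (a matrix of the form $\left(\begin{smallmatrix}0&f_2\\ f_2&f_3\end{smallmatrix}\right)$ per Table \ref{normalization conditions Table}) could a priori be nonzero --- but this is harmless, as your subsequent integration of the $(1,2)$ and $(2,2)$ equations yields $S_{12}\equiv S_{22}\equiv 0$ without using that claim.
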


\begin{proposition}\label{table4prop2}
    A $2$-nondegenerate model in $\mathbb{C}^4$ with 2-dimensional subalgebra of $\mathfrak{u}(\mathbf{H})\cap \mathfrak{g}_{0,0}$ consisting of infinitesimal symmetries is locally biholomorphicaly equivalent to one of the following $2$-nondegenerate models:
    \begin{enumerate}
      \item The model with $\mathbf{H}=\left(\begin{matrix}
          1&0\\0&-1
      \end{matrix}\right)$ and $\mathbf{S}(\zeta)= \left(\begin{matrix}
          0&\zeta\\\zeta&0
      \end{matrix}\right)$, which is Type \Rmnum{6} in Table \ref{table of defining equations}. The subalgebra of $\mathfrak{u}(\mathbf{H})\cap \mathfrak{g}_{0,0}$ consisting of infinitesimal symmetries is
      \[
\left\{\left.
\left(
\begin{array}{cc}
i\theta_1 & 0 \\
0 & i\theta_2
\end{array}
\right)
\,\right|\,
\theta_j\in \mathbb{R}
\right\}.
\]
      \item The model with $\mathbf{H}=\left(\begin{matrix}
          1&0\\0&1
      \end{matrix}\right)$ and $\mathbf{S}(\zeta)= \left(\begin{matrix}
          0&\zeta\\\zeta&0
      \end{matrix}\right)$, which is Type \Rmnum{5}.A in Table \ref{table of defining equations}.
The subalgebra of $\mathfrak{u}(\mathbf{H})\cap \mathfrak{g}_{0,0}$ consisting of infinitesimal symmetries is
      \[
\left\{\left.
\left(
\begin{array}{cc}
i\theta_1 & 0 \\
0 & i\theta_2
\end{array}
\right)
\,\right|\,
\theta_j\in \mathbb{R}
\right\}.
\] 
      \item The model with $\mathbf{H}=\left(\begin{matrix}
          0&1\\1&0
      \end{matrix}\right)$ and $\mathbf{S}(\zeta)= \left(\begin{matrix}
          0&\zeta\\\zeta&0
      \end{matrix}\right)$, which is Type \Rmnum{5}.B in Table \ref{table of defining equations}. The subalgebra of $\mathfrak{u}(\mathbf{H})\cap \mathfrak{g}_{0,0}$ consisting of infinitesimal symmetries is
      \[
\left\{\left.
\left(
\begin{array}{cc}
\theta_1+i\theta_2 & 0 \\
0 & -\theta_1+i\theta_2
\end{array}
\right)
\,\right|\,
\theta_j\in \mathbb{R}
\right\}.
\]
      \item The model with $\mathbf{H}=\left(\begin{matrix}
          1&0\\0&\epsilon
      \end{matrix}\right)$ and $\mathbf{S}(\zeta)= \left(\begin{matrix}
          \zeta&0\\0&0
      \end{matrix}\right)$ where $\epsilon=\pm1$, which are Types \Rmnum{4}.A and \Rmnum{4}.B in Table \ref{table of defining equations}. The subalgebra of $\mathfrak{u}(\mathbf{H})\cap \mathfrak{g}_{0,0}$ consisting of infinitesimal symmetries is
      \[
\left\{\left.
\left(
\begin{array}{cc}
i\theta_1 & 0 \\
0 & i\theta_2
\end{array}
\right)
\,\right|\,
\theta_j\in \mathbb{R}
\right\}.
\]
    \end{enumerate}
    
\end{proposition}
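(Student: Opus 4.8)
The plan is to read the answer off the classification of modified symbols in Proposition \ref{degree 0 symmetry bound} and then verify, symbol class by symbol class, that the distinguished realizations \eqref{new S2} are the only complete normal forms whose infinitesimal symmetry subalgebra inside $\mathfrak{u}(\mathbf{H})\cap\mathfrak{g}_{0,0}$ is exactly $2$-dimensional. The starting point is that for the model \eqref{new S2} the subgroup of $\mathrm{CU}(\mathbf{H})\cap G_{0,0}$ acting by symmetries is the stabilizer of $\Omega$, whose Lie algebra is recorded in the \emph{annihilator of $\Omega$} line of Table \ref{7-d. symbol table} (Proposition \ref{degree 0 symmetry bound}). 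So the first step is simply to scan the table for the entries whose annihilator has real dimension $2$.

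First I would carry out that dimension count. Reading the annihilator conditions in Table \ref{7-d. symbol table}, one finds real dimension $2$ exactly for the third row (both values of $\epsilon$), the fifth row, and the fourth row with $\tau=0$, while the first, second, and sixth rows give dimension $0$ and the seventh row gives dimension $3$. In each qualifying case $\Omega=0$, so \eqref{new S2} collapses to $\mathbf{S}(\zeta)=\zeta\,S^{0,2}$ and the annihilator coincides with the full real form $\{L\in\mathfrak{g}_{0,0}':\ \sigma_\phi(L)=L\}$. Substituting the representatives $(\mathbf{H},S^{0,2})$ from these rows reproduces verbatim the four families in the statement: the third row with $\epsilon=1,-1$ gives cases (2) and (1), the fifth row gives case (3), and the fourth row gives case (4); in each instance the displayed symmetry subalgebra is precisely that real form. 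This establishes the forward direction, that each listed model has a $2$-dimensional symmetry subalgebra.

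The substance is the converse, that nothing else has symmetry dimension exactly $2$. Following the scheme announced in Section \ref{A complete normal form}, I would, for each symbol class, use the transformation law \eqref{symgrouptransform} to let the residual group (the stabilizer of $\Omega$) act on the tail of the expansion of $\mathbf{S}(\zeta)$, that is on the coefficients of weighted order $\geq 3$, and normalize it. By Corollary \ref{C4 degree zero bound} no model in a given symbol class exceeds the annihilator dimension, so it remains to rule out nonzero tails that stay invariant (keeping the dimension at $2$) and, in the seventh-row class, tails that would cut the $3$-dimensional isotropy down to exactly $2$. For the three qualifying classes one shows that the only tail fixed by the entire $2$-dimensional group is the zero tail, the residual ambiguity being only the discrete rescalings $c\zeta^\mu\mapsto e^{i\theta\pi}c\zeta^\mu$ of Remark \ref{discrete normalization remark}; hence any admissible nonzero higher-order term strictly lowers the symmetry. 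The classes of annihilator dimension $0$ or $1$ can never reach $2$, and for the seventh row any symmetry-breaking tail drops the dimension to $1$ or $0$, so those models are accounted for in Propositions \ref{table4prop1} and \ref{table4prop} rather than here.

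I expect the main obstacle to be precisely this tail analysis. The difficulty is that \eqref{symgrouptransform} reparametrizes the base variable by the biholomorphism $\zeta\mapsto\zeta+g_{u,U}(\zeta)$ and simultaneously conjugates $\mathbf{S}$ by $U^{-1}$, so the induced action on a higher coefficient mixes the linear matrix action with contributions from all lower coefficients through $g_{u,U}$. Disentangling this to confirm that the $2$-dimensional group fixes no nonzero tail, and that the $3$-dimensional seventh-row group never has $2$-dimensional isotropy on tails, is the computational heart of the proof and is what the case-by-case work of Subsections \ref{row cases subsection a}--\ref{row cases subsection e} is designed to settle.
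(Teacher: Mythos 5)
Your proposal is correct and follows essentially the same route as the paper: the candidate symbol classes are read off from Table \ref{7-d. symbol table} via Proposition \ref{degree 0 symmetry bound} and Corollary \ref{C4 degree zero bound} (rows 3, 5, and 4 with $\tau=0$ being the only ones whose real form of the annihilator of $\Omega$ is $2$-dimensional), and the converse is exactly the tail-normalization carried out in Subsections \ref{row cases subsection b}, \ref{row cases subsection c}, \ref{row cases subsection d}, and \ref{row cases subsection e}, which shows any nonzero higher-order term in $\mathbf{S}(\zeta)$ cuts the residual continuous group to dimension at most $1$ and that row 7 tails never leave exactly $2$ dimensions. The only imprecision — rows 1, 2, 6 give dimension $1$ rather than $0$ when $\tau=0$ — is harmless since either way they cannot reach $2$.
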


\begin{proposition}\label{table4prop1}
    A $2$-nondegenerate model in $\mathbb{C}^4$ with 1-dimensional subalgebra of $\mathfrak{u}(\mathbf{H})\cap \mathfrak{g}_{0,0}$ consisting of infinitesimal symmetries is locally biholomorphicaly equivalent to one of the following $2$-nondegenerate models:
    \begin{enumerate}
        \item The model with $\mathbf{H}=\left(\begin{matrix}
          1&0\\0&\epsilon
      \end{matrix}\right)$ and $\mathbf{S}(\zeta)= \left(\begin{matrix}
          1&0\\0&\lambda
      \end{matrix}\right)\zeta$ where $\epsilon=\pm1$ and  $\lambda>1$. The subalgebra of $\mathfrak{u}(\mathbf{H})\cap \mathfrak{g}_{0,0}$ consisting of infinitesimal symmetries is
      \[
\left\{\left.
\left(
\begin{array}{cc}
i\theta & 0 \\
0 & i\theta
\end{array}
\right)
\,\right|\,
\theta\in \mathbb{R}
\right\}.
\]
    \item The model with $\mathbf{H}=\left(\begin{matrix}
          0&1\\1&0
      \end{matrix}\right)$ and $\mathbf{S}(\zeta)= \left(\begin{matrix}
          1&0\\0&e^{i\theta}
      \end{matrix}\right)\zeta$ where $0<\theta<\pi$. The subalgebra of $\mathfrak{u}(\mathbf{H})\cap \mathfrak{g}_{0,0}$ consisting of infinitesimal symmetries is
      \[
\left\{\left.
\left(
\begin{array}{cc}
i\theta & 0 \\
0 & i\theta
\end{array}
\right)
\,\right|\,
\theta\in \mathbb{R}
\right\}.
\]
      \item The model with $\mathbf{H}=\left(\begin{matrix}
          1&0\\0&\epsilon
      \end{matrix}\right)$ and $\mathbf{S}(\zeta)= \left(\begin{matrix}
          0&\zeta\\\zeta&c_{2,\mu}\zeta^\mu
      \end{matrix}\right)$ where $\epsilon=\pm1$, $\mu>1$ is an integer, and $0<c_{2,\mu}$. The subalgebra of $\mathfrak{u}(\mathbf{H})\cap \mathfrak{g}_{0,0}$ consisting of infinitesimal symmetries is
      \[
\left\{\left.
\left(
\begin{array}{cc}
\frac{2-\mu}{\mu}i\theta & 0 \\
0 & i\theta
\end{array}
\right)
\,\right|\,
\theta\in \mathbb{R}
\right\}.
\]
      \item The model with $\mathbf{H}=\left(\begin{matrix}
          1&0\\0&\epsilon
      \end{matrix}\right)$ and $\mathbf{S}(\zeta)= \left(\begin{matrix}
          \zeta&c_{2,\mu}\zeta^\mu\\c_{2,\mu}\zeta^\mu&c_{1,2\mu-1}\zeta^{2\mu-1}
      \end{matrix}\right)$ where $\mu>1$, $c_{2,\mu}>0$ and $c_{1,2\mu-1}\in \mathbb{C}$. The subalgebra of $\mathfrak{u}(\mathbf{H})\cap \mathfrak{g}_{0,0}$ consisting of infinitesimal symmetries is
      \[
\left\{\left.
\left(
\begin{array}{cc}
i\theta & 0 \\
0 & i(2\mu-1)\theta
\end{array}
\right)
\,\right|\,
\theta\in \mathbb{R}
\right\}.
\]
      \item The model with $\mathbf{H}=\left(\begin{matrix}
          1&0\\0&\epsilon
      \end{matrix}\right)$ and $\mathbf{S}(\zeta)= \left(\begin{matrix}
          \zeta&0\\0&c_{1,\mu}\zeta^\mu
      \end{matrix}\right)$ where $\mu>1, c_{1,\mu}>0$. The subalgebra of $\mathfrak{u}(\mathbf{H})\cap \mathfrak{g}_{0,0}$ consisting of infinitesimal symmetries is
      \[
\left\{\left.
\left(
\begin{array}{cc}
i\theta & 0 \\
0 & i\mu\theta
\end{array}
\right)
\,\right|\,
\theta\in \mathbb{R}
\right\}.
\]
      \item The model with $\mathbf{H}=\left(\begin{matrix}
          0&1\\1&0
      \end{matrix}\right)$ and $\mathbf{S}(\zeta)= \left(\begin{matrix}
          \zeta&\zeta\\\zeta&0
      \end{matrix}\right)$. The subalgebra of $\mathfrak{u}(\mathbf{H})\cap \mathfrak{g}_{0,0}$ consisting of infinitesimal symmetries is
      \[
\left\{\left.
\left(
\begin{array}{cc}
i\theta & 0 \\
0 & i\theta
\end{array}
\right)
\,\right|\,
\theta\in \mathbb{R}
\right\}.
\]

      \end{enumerate}
    
\end{proposition}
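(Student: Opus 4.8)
The plan is to run the normalization procedure announced at the start of Section~\ref{A complete normal form} and to collect exactly those outcomes whose residual continuous symmetry inside $\mathfrak{u}(\mathbf{H})\cap\mathfrak{g}_{0,0}$ is one-dimensional. By Proposition~\ref{degree 0 symmetry bound}, every model has a modified symbol listed in one of the seven rows of Table~\ref{7-d. symbol table}; fixing a row pins down $\mathbf{H}$, $S^{0,2}=\mathbf{S}_\zeta(0)$ and the admissible $\Omega$, and by \cite[Theorem 6.5]{GKS2024} reduces the remaining freedom to $\mathrm{CU}(\mathbf{H})\cap G_{0,0}$ acting through \eqref{symgrouptransform} as $\tilde{\mathbf{S}}(\zeta)=U^{-1}\mathbf{S}(g_{(u,U)}(\zeta))(U^{-1})^T$. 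Passing to the quotient \eqref{CU quotient}, I would record for each row the explicit shape of $U\in\Re(\mathfrak{g}_{0,0}^\prime)$ together with the \emph{linear} reparametrization $g_{(u,U)}(\zeta)=c\,\zeta$ that is forced on it by the partial-normal-form requirement that the first nonzero entry of $\mathbf{S}(\zeta)$, in the ordering $(1,1)<(1,2)<(2,2)$, equal $\zeta$ identically.

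The engine of the argument is a character computation: once $U$ and $c$ are fixed, the order-$k$ Taylor coefficient of the $(i,j)$ entry of $\mathbf{S}(\zeta)$ is rescaled by a monomial $\kappa_{(i,j)}(k)$ in the eigenvalues of $U$ and in $c$, with integer exponents read off from the weights \eqref{grading convention}. Two features drive the classification. First, when $\Re(\mathfrak{g}_{0,0}^\prime)$ acts by unimodular $U$ the moduli of the Taylor coefficients are invariants and the corresponding strata are rigid, whereas a genuine scaling direction (rows 5 and 7) normalizes a modulus and collapses the branch faster. Second, along any one-parameter residual subgroup the phase $\arg\kappa_{(i,j)}(k)$ is affine and non-constant in $k$, hence vanishes for at most one $k$ per entry; a residual line therefore acts trivially precisely when every \emph{non-resonant} coefficient vanishes, leaving only the \emph{resonant} monomials.

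Consequently the one-dimensional stratum comes only from rows 1, 2, 3, 4 and 6, all of which have unimodular $\Re(\mathfrak{g}_{0,0}^\prime)$; rows 5 and 7 do not contribute, as their scaling freedom always bypasses the one-dimensional stratum, sending any model into the three-, two-, or zero-dimensional strata instead. For rows 1, 2 and 6, where $\Re(\mathfrak{g}_{0,0}^\prime)$ is already one-dimensional, taking $\Omega=0$ makes every coefficient of order $k\geq2$ non-resonant (the common factor is $e^{2i(k-1)\alpha}$), forcing the tail to vanish and giving the linear models (1), (2) and (6); the choice $\Omega\neq0$ instead annihilates the symmetry and belongs to Proposition~\ref{table4prop}. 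For rows 3 and 4, where $\Re(\mathfrak{g}_{0,0}^\prime)$ is two-dimensional, a nonzero higher coefficient drops the symmetry to a line while its modulus survives as an invariant. In row 3 the first such coefficient sits in the $(2,2)$ entry at an order $\mu>1$; normalizing it to $c_{2,\mu}>0$ selects the line $\mu\alpha+(\mu-2)\beta=0$, along which the $(2,2)$ phase equals $\tfrac{2(k-\mu)}{\mu}$ while the $(1,1)$ and $(1,2)$ phases never vanish for $k\geq2$, forcing $(1,1)\equiv0$, $(1,2)=\zeta$ and $(2,2)=c_{2,\mu}\zeta^\mu$ with symmetry $\mathrm{diag}(\tfrac{2-\mu}{\mu}i\theta,i\theta)$, which is case (3). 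In row 4 ($S^{0,2}=\mathrm{diag}(1,0)$, $(1,1)=\zeta$) the branch depends on whether the first higher coefficient occurs in $(1,2)$ or, with $(1,2)\equiv0$, in $(2,2)$: the former selects $\beta=(2\mu-1)\alpha$ and gives case (4), in which $(2,2)$ is resonant exactly at order $2\mu-1$ (the free complex $c_{1,2\mu-1}$); the latter selects $\beta=\mu\alpha$ and gives case (5). In each instance the residual one-parameter group acts by symmetries by \cite[Proposition 4.3]{GKS2024}, yielding the listed subalgebras.

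The step I expect to be the main obstacle is the bookkeeping certifying that the list is simultaneously exhaustive and irredundant, i.e. the ``one and only one'' clause of Theorem~\ref{main theorem}. Two points are delicate. First, one must verify that the displayed monomials are genuinely forced and that no stray resonance hides elsewhere: in the row 4 analysis, for instance, the $(1,2)$ resonance at the possibly non-integer order $\tfrac{\mu+1}{2}$ must be reconciled with the branch hypothesis $(1,2)\equiv0$ that separates cases (4) and (5), rather than producing a new model. Second, one must identify the residual \emph{discrete} stabilizer of each normal form---the finite phase rotations $c\zeta^\mu\mapsto e^{i\theta\pi}c\zeta^\mu$ of Remark~\ref{discrete normalization remark}---and check that it cannot conflate distinct parameter values, so that $\lambda$, $\theta$, $\mu$, $\epsilon$, $c_{2,\mu}$ and $c_{1,2\mu-1}$ are honest invariants; together with the disjointness of the symbol data across rows, this yields both the mutual inequivalence of the six families and their separation from the two- and zero-dimensional strata of Propositions~\ref{table4prop2} and~\ref{table4prop}.
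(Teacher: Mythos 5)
Your proposal is correct and follows essentially the same route as the paper: a row-by-row analysis of Table \ref{7-d. symbol table} in which the residual group \eqref{CU quotient} acts on the Taylor coefficients of $\mathbf{S}(\zeta)$ by explicit characters, and the one-dimensional stratum is exactly the set of configurations where only the resonant coefficients survive, yielding cases (1)--(6) with the stated symmetry lines. The only thin spot is the exclusion of row 7, which you attribute to ``scaling freedom'' but which actually hinges on the unipotent $\theta_3$-direction acting affinely (with nonzero linear part $2i\mu\theta_3$ or $2i\mu^\prime\gamma_1^\prime\theta_3$) on the first resonant coefficient, as worked out in \S\ref{row cases subsection e}; the conclusion is nonetheless right.
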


\begin{proposition}\label{table4prop}
    A $2$-nondegenerate model in $\mathbb{C}^4$ with 0-dimensional subalgebra of $\mathfrak{u}(\mathbf{H})\cap \mathfrak{g}_{0,0}$ consisting of infinitesimal symmetries is locally biholomorphicaly equivalent to a representative in Table \ref{normal forms table1} or \ref{normal forms table2}.
\end{proposition}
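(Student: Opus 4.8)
The plan is to carry out a symbol-by-symbol normalization, continuing the order-by-order reduction that produced the partial normal form. By \cite[Theorem 6.5]{GKS2024} together with Proposition \ref{degree 0 symmetry bound}, for each model I may assume that $\mathbf{H}$, $S^{0,2}=\mathbf{S}_\zeta(0)$, and $\Omega$ (equivalently $\mathbf{S}_{\zeta\zeta}(0)$) have been normalized to one of the tabulated entries of Table \ref{7-d. symbol table}, and that the only remaining freedom is the action \eqref{symgrouptransform} of the stabilizer of this symbol data inside $\mathrm{CU}(\mathbf{H})\cap G_{0,0}$. Writing $\mathbf{S}(\zeta)=\sum_{k\geq 1}\mathbf{S}^{(k)}\zeta^k$ with symmetric coefficient matrices $\mathbf{S}^{(k)}$, the first step is to compute the induced action on the coefficients $\mathbf{S}^{(k)}$ with $k\geq 3$. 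Since the always-symmetric normal subgroup $\{(u,\pm u\,\mathrm{Id})\mid u>0\}$ may be quotiented out as in \eqref{CU quotient}, it suffices to track the action of the finite-dimensional group $\mathrm{CU}(\mathbf{H})\cap G_{0,0}$, whose real dimension is read off from the $\Re(\mathfrak{g}_{0,0}^\prime)$ column of Table \ref{7-d. symbol table}. The cases in which the surviving symmetry has dimension $3$, $2$, or $1$ are exactly Propositions \ref{table4prop3}, \ref{table4prop2}, and \ref{table4prop1}; what remains here is to treat every symbol and every leading deformation for which the continuous freedom is exhausted completely.

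The second step is the normalization itself. For a generator $X\in\mathfrak{u}(\mathbf{H})\cap\mathfrak{g}_{0,0}$ the infinitesimal action on $\mathbf{S}(\zeta)$ takes the form
\begin{align}
\mathbf{S}(\zeta)\longmapsto -X\mathbf{S}(\zeta)-\mathbf{S}(\zeta)X^T+\dot g(\zeta)\,\mathbf{S}^\prime(\zeta),
\end{align}
where $\dot g(\zeta)$ is the infinitesimal reparametrization uniquely fixed by the requirement that the image remain in the partial normal form, that is, that $\mathbf{S}(\zeta)_{(i,j)}=\zeta$ persist at the first nonzero position in the ordering $(1,1)<(1,2)<(2,2)$. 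Proceeding in increasing order $k$ and, within each order, in that same position ordering, I identify the first coefficient on which the residual algebra acts nontrivially, use a one-parameter subgroup to normalize it (to $0$, or to a positive real, as dictated by the tabulated $\mathfrak{g}_{0,0}^\prime$ and $N_{\mathfrak{gl}(2,\mathbb{C})}(\mathfrak{g}_{0,-2}(0))$ of Table \ref{normalization conditions Table}), and thereby drop the residual dimension by one. Because the normalization group is finite-dimensional, this terminates after finitely many steps. The orders and positions at which the action degenerates are the resonances; the surviving coefficients there are precisely the free moduli recorded in Tables \ref{normal forms table1} and \ref{normal forms table2}.

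The third step is to isolate the $0$-dimensional cases and assemble the tables. Restricting to those symbols and those leading deformations for which the reduction exhausts all continuous freedom, I read off the normalized $\mathbf{S}(\zeta)$ for each; these constitute the entries of Tables \ref{normal forms table1} and \ref{normal forms table2}, sorted by symbol. The leftover discrete stabilizer acts as described in Remark \ref{discrete normalization remark}, by $c\zeta^\mu\mapsto e^{i\theta\pi}c\zeta^\mu$, which accounts for the ``up to discrete action'' qualification and which I would use to certify that distinct table entries are genuinely inequivalent.

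The hard part will be the reparametrization $g_{(u,U)}$: it is defined only implicitly, through the condition that the transformed $\tilde{\mathbf{S}}$ stay in partial normal form, so the action on $\mathbf{S}^{(k)}$ is nonlinear and couples the linear $U$-action to every lower-order coefficient through $\dot g(\zeta)$. Computing this coupling accurately enough---both to confirm reachability of the tabulated forms and to certify that no continuous symmetry survives---is the bulk of the work and must be redone for each symbol separately, since the shape of $\mathfrak{u}(\mathbf{H})\cap\mathfrak{g}_{0,0}$ and the location of the resonances differ from case to case. A secondary difficulty is confirming exhaustiveness and mutual inequivalence of the resulting normal forms, which again rests on the discrete-group analysis of Remark \ref{discrete normalization remark}.
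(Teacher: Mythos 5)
Your strategy is essentially the paper's: start from the partial normal form of \cite[Theorem 6.5]{GKS2024} with the symbol data fixed by Proposition \ref{degree 0 symmetry bound}, pass to the quotient \eqref{CU quotient}, and normalize the Taylor coefficients of $\mathbf{S}(\zeta)$ order by order until only a discrete group survives, handling the implicitly defined reparametrization $g_{(u,U)}$ along the way. You have also correctly located the technical crux (the nonlinear coupling through $g_{(u,U)}$, which in the paper's row-7 case is genuinely recursive). Two caveats. First, your mechanism for producing the group --- exponentiating generators $X\in\mathfrak{u}(\mathbf{H})\cap\mathfrak{g}_{0,0}$ --- only yields the identity component of \eqref{CU quotient}. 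The paper finds the remaining connected components by direct computation and uses them \emph{actively during} the normalization, not merely as a leftover stabilizer at the end: e.g.\ elements such as $(1,\left(\begin{smallmatrix}1&0\\0&-1\end{smallmatrix}\right))$ or $(1,\left(\begin{smallmatrix}0&1\\1&0\end{smallmatrix}\right))$ are what impose the orderings $|c_{2,\mu}|\geq c_{1,\mu}$ and the sign conditions appearing in Tables \ref{normal forms table1} and \ref{normal forms table2}. If you defer these to a final discrete pass you will obtain a weaker normal form with redundant representatives, so this step needs to be built into the reduction from the start. Second, a small imprecision: the residual group acts on each coefficient by (possibly nonunimodular) scaling and phase rotation, so a nonzero coefficient can be normalized to a positive real or to $1$, but never to $0$; the zero entries in the tables are constraints on the structure, not achievable normalizations. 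Beyond these points, what remains is exactly the case-by-case computation of \S\ref{row cases subsection a}--\ref{row cases subsection e}, which your proposal outlines but does not carry out.
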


\begin{table}[htbp]
\scalebox{0.85}{
    \begin{tabular}{|c|c|c|}\hline
      $\mathbf{H}$ & First nonzero terms in the expansion on $\mathbf{S}(\zeta)$ & \multirow{2}*{General term}\\\cdashline{1-2}
       \To{3.5}\Bo{2.5}\parbox{2.5cm}{\centering Where formula is derived}& Possible normal forms& \\\hline\hline

     \To{5}\Bo{4} 
     $\left(\begin{matrix}
          1&0\\0&\epsilon
      \end{matrix}\right)$
      &$\left(\begin{matrix}
          1&0\\0&\lambda
      \end{matrix}\right)\zeta+\left(\begin{matrix}
          0&\tau(1-\lambda^2)\\\tau(1-\lambda^2)&o_1
      \end{matrix}\right)\zeta^2+\dots$ & 
      \multirow{2}*{
      $\left(\begin{matrix}
          0&c_{2,k}\\ c_{2,k}&c_{1,k}
      \end{matrix}\right)$
      }
      \\\cdashline{1-2} 
      \S \ref{row cases subsection a}& either $\tau> 0$ or  $\tau= 0,o_1>0$&\\
      \hline

     \To{5}\Bo{4} 
     $\left(\begin{matrix}
          1&0\\0&\epsilon
      \end{matrix}\right)$
      &$\left(\begin{matrix}
          1&0\\0&\lambda
      \end{matrix}\right)\zeta+\left(\begin{matrix}
          0&c_{2,\mu}\\ c_{2,\mu}&c_{1,\mu}
      \end{matrix}\right)\zeta^\mu+\dots $ &
      \multirow{2}*{
      $\left(\begin{matrix}
          0&c_{2,k}\\ c_{2,k}&c_{1,k}
      \end{matrix}\right)$
      }
      \\\cdashline{1-2}  
      \S \ref{row cases subsection a}& either $c_{1,\mu}>0$ or $c_{1,\mu}=0,c_{2,\mu}>0$&\\
      \hline

    \To{5}\Bo{4} 
      $\left(\begin{matrix}
          0&1\\1&0
      \end{matrix}\right)$
      &$\left(\begin{matrix}
          1&0\\0&e^{i\theta}
      \end{matrix}\right)\zeta+\left(\begin{matrix}
          0&\tau (1-e^{2i\theta})\\
          \tau (1-e^{2i\theta})&o_1
      \end{matrix}\right)\zeta^2+\dots $ &
      \multirow{2}*{
      $\left(\begin{matrix}
          0&c_{2,k}\\ c_{2,k}&c_{1,k}
      \end{matrix}\right)$
      }
      \\\cdashline{1-2} 
      \S \ref{row cases subsection a}& either $\tau>0$ or $\tau= 0,o_1>0$&\\
      \hline

     \To{5}\Bo{4} 
     $\left(\begin{matrix}
          0&1\\1&0
      \end{matrix}\right)$
      &$\left(\begin{matrix}
          1&0\\0&e^{i\theta}
      \end{matrix}\right)\zeta+\left(\begin{matrix}
          0&c_{2,\mu}\\ c_{2,\mu}&c_{1,\mu}
      \end{matrix}\right)\zeta^\mu+\dots$  &
      \multirow{2}*{
      $\left(\begin{matrix}
          0&c_{2,k}\\ c_{2,k}&c_{1,k}
      \end{matrix}\right)$
      }
      \\\cdashline{1-2}  
      \S \ref{row cases subsection a}& \Bo{1.2}either $c_{1,\mu}>0$ or $c_{1,\mu}=0,c_{2,\mu}>0$&\\\hline

     \To{5}\Bo{4} 
     $\left(\begin{matrix}
          1&0\\0&\epsilon
      \end{matrix}\right)$
      &$\left(\begin{matrix}
          0&1\\1&0
      \end{matrix}\right)\zeta+\left(\begin{matrix}
          o_1&0\\0&o_2
      \end{matrix}\right)\zeta^2+\left(\begin{matrix}
          c_{1,\mu}&0\\0&c_{2,\mu}
      \end{matrix}\right)\zeta^\mu+\dots $ &
      \multirow{2}*{$\left(\begin{matrix}
          c_{1,k}&0\\ 0&c_{2,k}
      \end{matrix}\right)$
      }
      \\\cdashline{1-2} 
      \S \ref{row cases subsection b} &\To{4}\Bo{3} 
      \parbox{10cm}{\centering either $o_2\geq o_1>0$ or\\ $o_1=0,o_2>0$ and either $|c_{2,\mu}|\geq c_{1,\mu}>0$ or $c_{2,\mu}> c_{1,\mu}=0$}&\\ \hline

        \To{5}\Bo{4} 
     $\left(\begin{matrix}
          1&0\\0&\epsilon
      \end{matrix}\right)$
      &$\left(\begin{matrix}
          0&1\\1&0
      \end{matrix}\right)\zeta +\left(\begin{matrix}
      c_{1,\mu}&0\\ 0&c_{2,\mu}     \end{matrix}\right)\zeta^\mu+\left(\begin{matrix}
          c_{1,\mu^\prime}&0\\0&c_{2,\mu^\prime}
      \end{matrix}\right)\zeta^{\mu^\prime}+\dots$  &
      \multirow{2}*{$\left(\begin{matrix}
          c_{1,k}&0\\ 0&c_{2,k}
      \end{matrix}\right)$
      }
      \\\cdashline{1-2}  
      \S \ref{row cases subsection b} &  \To{4}\Bo{3}\parbox{11cm}{\centering either $c_{2,\mu}\geq c_{1,\mu}>0$ or \\ $c_{1,\mu}=0,c_{2,\mu}>0$ and either $|c_{2,\mu^\prime}|\geq c_{1,\mu^\prime}>0$ or $c_{2,\mu^\prime}> c_{1,\mu^\prime}=0$}&\\ \hline

        \To{5}\Bo{4} 
      $\left(\begin{matrix}
          0&1\\1&0
      \end{matrix}\right)$
      &$\left(\begin{matrix}
          0&1\\1&0
      \end{matrix}\right)\zeta+\left(\begin{matrix}
          o_1&0\\0&o_2
      \end{matrix}\right)\zeta^2+\dots $  &
      \multirow{2}*{$\left(\begin{matrix}
          c_{1,k}&0\\ 0&c_{2,k}
      \end{matrix}\right)$
      }
      \\\cdashline{1-2} 
      \S \ref{row cases subsection d}& either $|o_2|\geq o_1=1$ or $o_1=0,o_2=1$& \\ \hline

       \To{5}\Bo{4} 
      $\left(\begin{matrix}
          0&1\\1&0
      \end{matrix}\right)$
      &$\left(\begin{matrix}
          0&1\\1&0
      \end{matrix}\right)\zeta +\left(\begin{matrix}
          c_{1,\mu}&0\\ 0&c_{2,\mu}
      \end{matrix}\right)\zeta^\mu+\dots$  &
      \multirow{2}*{$\left(\begin{matrix}
          c_{1,k}&0\\ 0&c_{2,k}
      \end{matrix}\right)$
      }
      \\\cdashline{1-2} 
      \S \ref{row cases subsection d}& either $|c_{2,\mu}|\geq c_{1,\mu}=1$ or $c_{1,\mu}=0,c_{2,\mu}=1$& \\
    \hline

           \To{5}\Bo{4} 
      $\left(\begin{matrix}
          0&1\\1&0
      \end{matrix}\right)$
      &$\left(\begin{matrix}
          1&1\\1&0
      \end{matrix}\right)\zeta+\left(\begin{matrix}
          0&-4\tau\\ -4\tau&o_1
      \end{matrix}\right)\zeta^2+\dots $  &
      \multirow{2}*{$\left(\begin{matrix}
          0&c_{2,k}\\ c_{2,k}&c_{1,k}
      \end{matrix}\right)$
      }
      \\\cdashline{1-2} 
      \S \ref{row cases subsection a}& either $\tau>0$ or $\tau= 0,o_1>0$& \\
        \hline

      \To{5}\Bo{4} 
      $\left(\begin{matrix}
          0&1\\1&0
      \end{matrix}\right)$
      &$\left(\begin{matrix}
          1&1\\1&0
      \end{matrix}\right)\zeta+\left(\begin{matrix}
          0&c_{2,\mu}\\ c_{2,\mu}&c_{1,\mu}
      \end{matrix}\right)\zeta^\mu+\dots$  &
      \multirow{2}*{$\left(\begin{matrix}
          0&c_{2,k}\\ c_{2,k}&c_{1,k}
      \end{matrix}\right)$
      }
      \\\cdashline{1-2}  
      \S \ref{row cases subsection a}& either $c_{1,\mu}>0$ or $c_{1,\mu}=0,c_{2,\mu}>0$& \\\hline
    \end{tabular}
}
    \captionsetup{width=.9\linewidth}
    \caption{Complete normal form data of $7$-dimensional $2$-nondegenerate hypersurfaces of symbol type as in rows 1, 2, 3, 5, and 6 of Table \ref{7-d. symbol table} whose symmetries in \eqref{CU quotient} form a discrete group. The \emph{general term} column shows the general form of the terms in the expansion by $\zeta^k$, using labels $c_{1,k},c_{2,k}\in \mathbb{C}$. The parameters are $\mu>2$, $\epsilon=\pm1,\lambda>1,0<\theta<\pi$, and $\tau,o_1,o_2,c_{l,k}\in\mathbb{C}$ if not stated otherwise. $\tau$ determines the modified symbol according to Table \ref{7-d. symbol table} and $o_1,o_2$ are obstructions to first order constancy.}
    \label{normal forms table1}
\end{table}

\begin{table}[htbp]
\scalebox{0.9}{
\begin{tabular}{|c|c|}\hline
    $\mathbf{H}$ & First nonzero terms in the expansion on $\mathbf{S}(\zeta)$ that are normalized \\\cdashline{1-2}
       \To{3.5}\Bo{2.5}\parbox{2.5cm}{\centering Where formula is derived}
    & possible normal forms\\
    
    \hline
    \To{5}\Bo{4} 
    $\left(\begin{matrix}
      1&0\\0&\epsilon
    \end{matrix}\right)$
    &$\left(\begin{matrix}
      1&0\\0&0
    \end{matrix}\right)\zeta+\left(\begin{matrix}
      0&\tau\\
      \tau&o_1
    \end{matrix}\right)\zeta^2+\dots$  \\\cdashline{1-2}
    \S \ref{row cases subsection c}& $\tau>0,o_1>0$\\

    \hline
    \To{5}\Bo{4} 
    $\left(\begin{matrix}
      1&0\\0&\epsilon
    \end{matrix}\right)$
    &$\left(\begin{matrix}
      1&0\\0&0
    \end{matrix}\right)\zeta+\left(\begin{matrix}
      0&\tau\\
      \tau&o_1
    \end{matrix}\right)\zeta^2 +\left(\begin{matrix}
      0&c_{2,\mu^\prime}\\ c_{2,\mu^\prime}&c_{1,\mu^\prime}
    \end{matrix}\right)\zeta^\mu+\dots$  \\\cdashline{1-2}
    \S \ref{row cases subsection c}& \To{4}\Bo{2.8}\parbox{10cm}{\centering either $\tau=0,o_1>0$ or $\tau>0,o_1=0$ and either $c_{1,\mu^\prime}>0$ or $c_{2,\mu^\prime}>0$ if $c_{1,\mu^\prime}=0$ or $\mu^\prime=3$ in $\tau=0$ case}\\

    \hline
    \To{5}\Bo{4} 
    $\left(\begin{matrix}
      1&0\\0&\epsilon
    \end{matrix}\right)$
    &$\left(\begin{matrix}
      1&0\\0&0
    \end{matrix}\right)\zeta+\left(\begin{matrix}
      0&\tau\\
      \tau&0
    \end{matrix}\right)\zeta^2+\left(\begin{matrix}
      0&0\\
      0&c_{1,3}
    \end{matrix}\right)\zeta^3+\left(\begin{matrix}
      0&c_{2,\mu^{\prime\prime}}\\ c_{2,\mu^{\prime\prime}}&c_{1,\mu^{\prime\prime}}\end{matrix}\right)\zeta^{\mu^{\prime\prime}}+\dots$  
    \\\cdashline{1-2}
    \S \ref{row cases subsection c} &\Bo{1.2} $\tau>0$ and either $c_{1,\mu^{\prime\prime}}>0$ or $c_{1,\mu^{\prime\prime}}=0,c_{2,\mu^{\prime\prime}}>0$\\

    \hline
    \To{5}\Bo{4} 
    $\left(\begin{matrix}
      1&0\\0&\epsilon
    \end{matrix}\right)$
    &$\left(\begin{matrix}
      1&0\\0&0
    \end{matrix}\right)\zeta+
    \left(\begin{matrix}
      0&c_{2,\mu}\\ c_{2,\mu}&c_{1,\mu}
    \end{matrix}\right)\zeta^\mu+\dots$  \\\cdashline{1-2}
    \S \ref{row cases subsection c}& \Bo{1.2}$c_{1,\mu}>0,c_{2,\mu}>0$ \\

    \hline
    \To{5}\Bo{4} 
    $\left(\begin{matrix}
      1&0\\0&\epsilon
    \end{matrix}\right)$
    &$\left(\begin{matrix}
      1&0\\0&0
    \end{matrix}\right)\zeta +\left(\begin{matrix}
      0&c_{2,\mu}\\ c_{2,\mu}&c_{1,\mu}
    \end{matrix}\right)\zeta^\mu +\left(\begin{matrix}
      0&c_{2,\mu^\prime}\\ c_{2,\mu^\prime}&c_{1,\mu^\prime}
    \end{matrix}\right)\zeta^{\mu^\prime}+\dots$  \\\cdashline{1-2}
    \S \ref{row cases subsection c}& \To{3.6}\Bo{2.6}\parbox{11cm}{\centering either $c_{1,\mu}=0,c_{2,\mu}>0$ or $c_{1,\mu}>0,c_{2,\mu}=0$ and either $c_{1,\mu^\prime}>0$ or $c_{2,\mu^\prime}>0$ if $c_{1,\mu^\prime}=0$ or $\mu^\prime=2\mu-1$ in $c_{1,\mu}=0$ case}\\

    \hline
    \To{5}\Bo{4} 
    $\left(\begin{matrix}
      1&0\\0&\epsilon
    \end{matrix}\right)$
    &$\left(\begin{matrix}
      1&0\\0&0
    \end{matrix}\right)\zeta +\left(\begin{matrix}
      0&c_{2,\mu}\\ c_{2,\mu}&0
    \end{matrix}\right)\zeta^\mu +\left(\begin{matrix}
      0&0\\ 0&c_{1,2\mu-1}   \end{matrix}\right)\zeta^{2\mu-1}+\left(\begin{matrix}
      0&c_{2,\mu^{\prime\prime}}\\ c_{2,\mu^{\prime\prime}}&c_{1,\mu^{\prime\prime}}    \end{matrix}\right)\zeta^{\mu^{\prime\prime}}+\dots$  \\\cdashline{1-2}
    \S \ref{row cases subsection c}&\Bo{1.2} $c_{2,\mu}>0$ and either $c_{1,\mu^{\prime\prime}}>0$ or $c_{1,\mu^{\prime\prime}}=0,c_{2,\mu^{\prime\prime}}>0$
    \\

    \hline
    \To{5}\Bo{4} 
    $\left(\begin{matrix}
      0&1\\1&0
    \end{matrix}\right)$
    &$\left(\begin{matrix}
      1&0\\0&0
    \end{matrix}\right)\zeta+\left(\begin{matrix}
      0&o_2\\o_2&1
    \end{matrix}\right)\zeta^2+\dots$  \\\cdashline{1-2}
    \S \ref{row cases subsection e}& $o_2> 0$ \\

    \hline
    \To{5}\Bo{4} 
    $\left(\begin{matrix}
      0&1\\1&0
    \end{matrix}\right)$
    &$\left(\begin{matrix}
      1&0\\0&0
    \end{matrix}\right)\zeta+\left(\begin{matrix}
      0&1\\1&0
    \end{matrix}\right)\zeta^2+\left(\begin{matrix}
      0&c_{2,3}\\c_{2,3}&0    \end{matrix}\right)\zeta^3+\dots+\left(\begin{matrix}
      0&c_{2,\mu^\prime}\\ c_{2,\mu^\prime}&c_{1,\mu^\prime}
    \end{matrix}\right)\zeta^{\mu^\prime}+\dots$  \\\cdashline{1-2}
    \S \ref{row cases subsection e}&\To{3}\Bo{2} either $c_{1,\mu^\prime}\neq 0, \frac{c_{1,\mu^\prime+1}}{c_{1,\mu^\prime}}\in \mathbb{R}$ or $c_{1,\mu^{\prime}}=0$ for all $\mu^{\prime},c_{2,3}\in \mathbb{R}$\\

    \hline
    \To{5}\Bo{4} 
    $\left(\begin{matrix}
      0&1\\1&0
    \end{matrix}\right)$
    &$\left(\begin{matrix}
      1&0\\0&0
    \end{matrix}\right)\zeta+\left(\begin{matrix}
      0&c_{2,\mu}\\ c_{2,\mu}&1
    \end{matrix}\right)\zeta^\mu+\dots$  \\\cdashline{1-2}
    \S \ref{row cases subsection e}& \Bo{1.2}$c_{2,\mu}> 0$ \\

    \hline
    \To{5}\Bo{4} 
    $\left(\begin{matrix}
      0&1\\1&0
    \end{matrix}\right)$
    &$\left(\begin{matrix}
      1&0\\0&0
    \end{matrix}\right)\zeta+\left(\begin{matrix}
      0&1\\1&0    \end{matrix}\right)\zeta^\mu+\left(\begin{matrix}
      0&c_{2,\mu+1}\\ c_{2,\mu+1}&0
    \end{matrix}\right)\zeta^\mu+\dots +\left(\begin{matrix}
      0&c_{2,\mu^\prime}\\ c_{2,\mu^\prime}&c_{1,\mu^\prime}
    \end{matrix}\right)\zeta^{\mu^\prime}+\dots$  \\\cdashline{1-2}
    \S \ref{row cases subsection e}& \To{3}\Bo{2} either $c_{1,\mu^\prime}\neq 0, \frac{c_{1,\mu^\prime+\mu-1}}{c_{1,\mu^\prime}}\in \mathbb{R}$ or $c_{1,\mu^{\prime}}=0$ for all $\mu^{\prime},c_{2,2\mu-1}\in \mathbb{R}$\\
    \hline
\end{tabular}
}
    \captionsetup{width=.9\linewidth}
    \caption{Complete normal form data of $7$-dimensional $2$-nondegenerate hypersurfaces of symbol type as in rows 4 and 7 of Table \ref{7-d. symbol table} whose symmetries in \eqref{CU quotient} form a discrete group. Terms in the expansion are all of the form $\left(\begin{matrix}
          0&c_{2,k}\\ c_{2,k}&c_{1,k}
      \end{matrix}\right)\zeta^k$ for $c_{1,k},c_{2,k}\in \mathbb{C}$.   The parameters are $\mu^\prime,\mu^{\prime\prime}>\mu>2$, $\epsilon=\pm 1$, and $\tau,o_1,o_2,c_{l,k}\in\mathbb{C}$ if not stated otherwise. $\tau$ determines the modified symbol according to Table \ref{7-d. symbol table} and $o_1,o_2$ are obstructions to first order constancy.}
    \label{normal forms table2}
\end{table}

The proof of these propositions starts by transforming \eqref{gen def fun} so that its bigraded symbol representatives $(\mathbf{H},S^{0,2})$ in the new coordinates are in the normal form of Table \ref{normalization conditions Table}. Next, we will apply transformations of the form \eqref{symgrouptransform} with $(u,U)$ in \eqref{CU quotient}. 
Since $\mathfrak{g}^{\prime}_{0,0}$ is the complexification of the Lie algebra of \eqref{CU quotient}, exponentiating the $\mathfrak{g}^{\prime}_{0,0}$ matrices of Table \ref{7-d. symbol table} provides generators of the identity component in \eqref{CU quotient}. We find this group's other components by direct computation, and we consider a few separate cases corresponding to the separate rows of  Table \ref{7-d. symbol table}. This will be done in the separate subsections.

We adopt the following uniform conventions for labeling elements in the expansion of $\mathbf{S}(\zeta)$ with respect to $\zeta$.
Since we bring $\mathbf{S}_{\zeta\zeta}(0)$  to normal form 
\[
O^N_{0,2}+\Omega\mathbf{S}_\zeta(0)+\mathbf{S}_\zeta(0)\Omega^T
\]
according to Proposition \ref{degree 0 symmetry bound} 
for $\Omega$ (depending on $\tau$) from Table \ref{7-d. symbol table} (except for the case in the sixth row of Table \ref{7-d. symbol table}, where we instead bring it to $\mathbf{S}_{\zeta\zeta}(0)=O^N_{0,2}+\Omega\mathbf{S}_\zeta(0)+\mathbf{S}_\zeta(0)\Omega^T-6\tau\mathbf{S}_\zeta(0)$ to obtain the claimed normal form), we label $o_1,o_2$ the variables appearing in the obstruction $O^N_{0,2}$ to first order constancy in the image of $\pi_{N_{0,2}}$ from Table \ref{normalization conditions Table}. In the remainder of the expansion, we have in degree $\mu$ two terms with complex coefficients $c_{1,\mu}$ and $c_{2,\mu}$, but their explicit position in the symmetric matrix depends on the bigraded symbol and we present it in the Tables \ref{normal forms table1} and \ref{normal forms table2}.

\subsection{Expansions of Row 1, 2, and 6 in Table 3}\label{row cases subsection a}

For the rows $1$, $2$, or $6$, if $\tau>0$, then only the trivial subgroup in the identity component of \eqref{CU quotient} preserves $\tau>0$. For row $6$, there are no further connected components and we are done with this case. For the rows $1$ and $2$, there is another connected component of \eqref{CU quotient} represented by $(1,\left(\begin{smallmatrix}
    1& 0\\ 0& -1
\end{smallmatrix}\right))$. There is an element in this other component of \eqref{CU quotient} mapping $\tau\mapsto \tau, o_1\mapsto -o_1, c_{1,\mu}\mapsto -e^{i\pi(\mu-1)}c_{1,\mu}$ and $c_{2,\mu}\mapsto e^{i\pi(\mu-1)}c_{2,\mu}$, which can be used to normalize the next nonzero element in the expansion of $\mathbf{S}(\zeta)$ according to Remark \ref{discrete normalization remark}, finishing the normalization.

If $\tau=0$, then the identity component of \eqref{CU quotient} consists of unit multiples of the identity matrix, so our transformations are given by  $u=1$, $U=e^{i\theta}\mathrm{Id}$ and $g_U(\zeta)=(e^{2i\theta}-1)\zeta$ for $\theta\in \mathbb{R}$. Such transformations have the effect of rescaling the terms of degree $\mu$ in the expansion of $\mathbf{S}(\zeta)$ by $e^{i(2\mu-2)\theta}$. Considering the first nonzero term in the expansion of $\mathbf{S}(\zeta)$ with degree greater than $1$ we can transform it to a positive real number.  If there is no such term then our transformations are all symmetries. Otherwise, the subgroup of the identity component in \eqref{CU quotient} preserving this last normalization is the discrete group 
\begin{align}\label{r126 discrete group}
\{(u,U)=(1,e^{i\theta}\mathrm{Id})\,|\, \theta(2\mu-2)=2k\pi,\,\forall\,k=0,\ldots,\mu-2\}.
\end{align}
\begin{remark}\label{exhausting finite group}
    In the following paragraph, we describe a normalization procedure that exhausts the remaining freedom from the action of the finite group \eqref{r126 discrete group}. We apply an analogous procedure in latter subsections as well, but do not repeat the details there.
\end{remark}
Applying this discrete group, the next nonzero coefficient $c_{j,\mu^\prime}$ in the expansion of $\mathbf{S}(\zeta)$ with  $\mu^\prime>\mu$ can be transformed to any number of the form $e^{i(2\mu^\prime-2)2k/(2\mu-2)}c_{j,\mu^\prime}$, normalizing it according to Remark \ref{discrete normalization remark}. Still a nontrivial subgroup of \eqref{r126 discrete group} can preserve this additional normalization, in which case we apply this subgroup's action to normalize the next nontrivial term in the expansion of $\mathbf{S}(\zeta)$ of degree $\mu^{\prime\prime}>\mu^\prime$, essentially repeating the previous step in a higher degree. Next, we repeat this process, iteratively using part of \eqref{r126 discrete group} to normalize terms of subsequently higher degree until all of \eqref{r126 discrete group} is exhausted (or there remains only a subgroup of \eqref{r126 discrete group} acting by symmetries on $\mathbf{S}(\zeta)$, a special case that is easily recognized).

 For row $6$, there are no further connected components of \eqref{CU quotient} and we are done with this case. For the rows $1$ and $2$, there is another connected component of \eqref{CU quotient} represented by $(1,\left(\begin{smallmatrix}
    1& 0\\ 0& -1
\end{smallmatrix}\right))$. Therefore, we have two possibilities how this other component of \eqref{CU quotient} can act, while preserving the normalizations we achieved so far. If $c_{1,\mu}>0$, then we have an element in the other connected component of \eqref{CU quotient} mapping $c_{1,\mu}\mapsto c_{1,\mu}, c_{2,\mu}\mapsto -c_{2,\mu}, c_{1,\mu^\prime}\mapsto -e^{i\pi\frac{\mu^\prime-1}{\mu-1}}c_{1,\mu^\prime}$, and $c_{2,\mu^\prime}\mapsto e^{i\pi\frac{\mu^\prime-1}{\mu-1}}c_{2,\mu^\prime}$ which we may use to finish the normalization according to Remark \ref{discrete normalization remark}. If $c_{1,\mu}=0,c_{2,\mu}>0$, then we have an element in this other component of \eqref{CU quotient} mapping $c_{2,\mu}\mapsto c_{2,\mu}, c_{1,\mu^\prime}\mapsto -c_{1,\mu^\prime}$ and $c_{2,\mu^\prime}\mapsto c_{2,\mu^\prime}$ with which to finish the normalization. The remaining transformations (if any) preserving this additional normalization are symmetries, so we are done with these cases from rows 1, 2, or 6 of Table \ref{7-d. symbol table}.

\subsection{Expansions of Row 3 in Table 3}\label{row cases subsection b}

Suppose that $(\mathbf{H},S^{0,2})$ is as in row $3$ of Table \ref{7-d. symbol table}, with the parameter $\epsilon=\pm1$. The connected component of the identity in \eqref{CU quotient} consists of matrices of the form 
\begin{align}\label{s6 T3 iso group}
\left(
\begin{array}{cc}
     e^{i\theta_1}& 0 \\
     0&e^{i\theta_2}
\end{array}
\right)
\end{align}
with $\theta_1,\theta_2$ real. There is another component represented by  $(i^{(1-\epsilon)/2},\left(\begin{smallmatrix}
    0& 1\\ 1& 0
\end{smallmatrix}\right))$.

Let $\mu>1$ and $\mu^\prime>\mu$ be integers satisfying the following three properties: First, the expansion of $\mathbf{S}(\zeta)$ has the form
\begin{align}\label{first super linear terms R3 case a}
\mathbf{S}(\zeta)=\zeta\mathbf{S}_\zeta(0)+
\left(
\begin{array}{cc}
     \gamma_1 &0 \\
    0 & \gamma_2
\end{array}
\right)\zeta^\mu+
\left(
\begin{array}{cc}
     \gamma_1^\prime &0 \\
    0 & \gamma_2^\prime
\end{array}
\right)\zeta^{\mu^\prime}+O(\zeta^{\mu^\prime+1}),
\end{align} 
where we can use an element in the non-identity connected component of \eqref{CU quotient} to achieve $|\gamma_1|\leq |\gamma_2|$ or simultaneously $|\gamma_1|= |\gamma_2|$ and $|\gamma_1^\prime|\leq |\gamma_2^\prime|$.
Second, the coefficients $\gamma_1$ and $\gamma_2$ are both zero if and only if $\mathbf{S}(\zeta)=\zeta\mathbf{S}_\zeta(0)$ (if this is the case, then all elements of \eqref{CU quotient} are symmetries).
And third, the coefficients $\gamma_1^\prime$ and $\gamma_2^\prime$ are both zero if and only if $(\frac{\partial}{\partial \zeta})^{\mu+1}\mathbf{S}=0$.
 
The identity component of \eqref{CU quotient} acts diagonally on $\gamma_j$ mapping $\gamma_1\mapsto e^{i((\mu-2)\theta_1+\mu\theta_2)}\gamma_1$ and $\gamma_2\mapsto e^{i(\mu\theta_1+(\mu-2)\theta_2)}\gamma_2$ and analogously on $\gamma_j^\prime$ with $\mu^\prime$ replacing $\mu$.
 
If $|\gamma_1|>0$, then we can make both $\gamma_j$ real and positive. Only a finite subgroup of the identity component in \eqref{CU quotient} still preserves this additional normalization, namely
\begin{align}\label{r3 discrete group}
\left\{\left.
    \left(1,\left(
    \begin{array}{cc}
     e^{i\theta_1}& 0 \\
     0&e^{i\theta_2}
    \end{array}
    \right)\right)
    \,\right|\,
    \parbox{6cm}{$((\mu-2)\theta_1+\mu\theta_2)=2k_1\pi$,\\ $(\mu\theta_1+(\mu-2)\theta_2)=2k_2\pi,\,\forall\,k_j\in\mathbb{Z}$}
    \right\}.
\end{align}
Following the procedure referred to in Remark \ref{exhausting finite group}, one can exhaust all of \eqref{r3 discrete group}, further normalizing $\mathbf{S}(\zeta)$.

In the special case when $|\gamma_1|= |\gamma_2|$, we still have freedom to act by elements from the non-identity connected component of \eqref{CU quotient} by finding the first occurrence of $|c_{1,\mu^{\prime\prime}}|\neq |c_{2,\mu^{\prime\prime}}|$ and exhaust this freedom by ordering them so that $|c_{1,\mu^{\prime\prime}}|< |c_{2,\mu^{\prime\prime}}|$. If there is no such occurrence, then we have a discrete symmetry provided by the above representative of the non-identity connected component of \eqref{CU quotient}.
 
If $\gamma_1=0$, then we  can make $\gamma_2>0$ and restrict to the subgroup where $\theta_1=\frac{2k_2\pi-(\mu-2)\theta_2}{\mu}$ with $k_2\in\mathbb{Z}$. Then the action on $\gamma_j^\prime$ becomes $\gamma_1^\prime\mapsto e^{2i\frac{(\mu^\prime+\mu-2)\theta_2+k_2\pi(\mu^\prime-2)}{\mu}}\gamma_1^\prime$ and $\gamma_2^\prime\mapsto e^{2i\frac{(\mu^\prime-\mu)\theta_2+k_2\pi\mu^\prime}{\mu}}\gamma_2^\prime.$ Thus if there are no $\gamma_j^\prime$, then we have $1$-dimensional freedom of symmetries, and otherwise we can exhaust the freedom by putting $\gamma_1^\prime>0$ or $\gamma_1^\prime=0,\gamma_2^\prime>0.$ What remains is a finite group that one can exhaust following the procedure referred to in Remark \ref{exhausting finite group}.

\subsection{Expansions of Row 4 in Table 3}\label{row cases subsection c}
  
Next suppose that $(\mathbf{H},S^{0,2})$ is as in row $4$ of Table \ref{7-d. symbol table}. In this case the entire group \eqref{CU quotient} is comprised of matrices of the form
\[
\left(
\begin{array}{cc}
     e^{i\theta_1} & 0 \\
     0 & e^{i\theta_2}
\end{array}
\right)
\quad\quad\forall\,\theta_j\in\mathbb{R}.
\]

Let $\mu>1$ and $\mu^\prime>\mu$ be integers satisfying the following three properties: First, the expansion of $\mathbf{S}(\zeta)$ has the form
\begin{align}\label{first super linear terms R3 case a}
\mathbf{S}(\zeta)=\zeta\mathbf{S}_\zeta(0)+
\left(
\begin{array}{cc}
     0 &\gamma_2 \\
    \gamma_2 & \gamma_1
\end{array}
\right)\zeta^\mu+
\left(
\begin{array}{cc}
     0 &\gamma_2^\prime \\
    \gamma_2^\prime & \gamma_1^\prime
\end{array}
\right)\zeta^{\mu^\prime}+O(\zeta^{\mu^\prime+1}).
\end{align} 
Second, the coefficients $\gamma_1$ and $\gamma_2$ are both zero if and only if $\mathbf{S}(\zeta)=\zeta\mathbf{S}_\zeta(0)$ (if this is the case, then all elements of \eqref{CU quotient} are symmetries).
And third, the coefficients $\gamma_1^\prime$ and $\gamma_2^\prime$ are both zero if and only if $(\frac{\partial}{\partial \zeta})^{\mu+1}\mathbf{S}=0$.

The group action of \eqref{CU quotient} on $\mathbf{S}(\zeta)$ preserves the general form of \eqref{first super linear terms R7 case a} while mapping $\gamma_1\mapsto e^{2i(\mu\theta_1-\theta_2)}\gamma_1$ and $\gamma_2\mapsto e^{i((2\mu-1)\theta_1-\theta_2)}\gamma_2$ and acting analogously on $\gamma_j^\prime$ with $\mu$ replaced by $\mu^\prime$. Thus if  $\gamma_1\neq 0$ and $\gamma_2\neq 0$, then we can make $\gamma_1>0$ and $\gamma_2>0.$ Only a finite subgroup of the identity component in \eqref{CU quotient} still preserves this additional normalization, namely
\begin{align}\label{r4 discrete group}
\left\{\left.
    \left(1,\left(
    \begin{array}{cc}
     e^{i\theta_1}& 0 \\
     0&e^{i\theta_2}
    \end{array}
    \right)\right)
    \,\right|\,
    \parbox{5.6cm}{$\mu\theta_1-\theta_2=k_1\pi$,\\ $(2\mu-1)\theta_1-\theta_2=2k_2\pi,\,\forall\,k_j\in\mathbb{Z}$}
    \right\}.
\end{align}
Following the procedure referred to in Remark \ref{exhausting finite group}, one can exhaust all of \eqref{r4 discrete group}, finishing the normalization of $\mathbf{S}(\zeta)$ for the $\gamma_1\neq 0$ and $\gamma_2\neq 0$ case.

If $\gamma_1=0$, then we can make $\gamma_2>0$ and there remains freedom of action by the subgroup with $\theta_2=(2\mu-1)\theta_1.$ This acts as $\gamma_1^\prime\mapsto e^{2i(\mu^\prime-2\mu+1)\theta_1}\gamma_1^\prime$ and $\gamma_2^\prime\mapsto e^{2i(\mu^\prime-\mu)\theta_1}\gamma_2^\prime$. Thus except the case $\mu^\prime=2\mu-1$ and $\gamma_2^\prime=0$, we can make the normalization $\gamma_1^\prime>0$ or simultaneously $\gamma_1^\prime=0$ and $\gamma_2^\prime>0$ at this level. In the exceptional case, we can move on to normalize the next nonzero term analogously, and if there is no such term then the remaining group acts by symmetries.

If $\gamma_2=0$, then $\gamma_1>0$ and there remains freedom of action by the subgroup with $\theta_2=\mu\theta_1+k_1\pi, k_1\in \mathbb{Z}.$ This acts as $\gamma_1^\prime\mapsto (-1)^{k_1} e^{2i(\mu^\prime-\mu)\theta_1}\gamma_1^\prime$ and $\gamma_2^\prime\mapsto (-1)^{k_1} e^{i(2\mu^\prime-\mu-1)\theta_1}\gamma_2^\prime$. Thus we can make the normalization of $\gamma_1^\prime>0$ or simultaneously $\gamma_1^\prime=0$ and $\gamma_2^\prime>0$. If there are no $\gamma_j^\prime$, then the group acts by symmetries.

\subsection{Expansions of Row 5 in Table 3}\label{row cases subsection d}

Next suppose that $(\mathbf{H},S^{0,2})$ is as in row $5$ of Table \ref{7-d. symbol table}. Accordingly the identity component of \eqref{CU quotient} is comprised of matrices of the form 
\begin{align}\label{r5 T3 iso group}
\left(
\begin{array}{cc}
     e^{\theta_1+i\theta_2}& 0 \\
     0&e^{-\theta_1+i\theta_2}
\end{array}
\right)
\end{align}
with $\theta_1,\theta_2$ are real. There are another 3 connected components of \eqref{CU quotient} generated by elements $(i,\left(\begin{smallmatrix}
    1& 0\\ 0& -1
\end{smallmatrix}\right))$ and $(1,\left(\begin{smallmatrix}
    0& 1\\ 1&0
\end{smallmatrix}\right)).$

Suppose the expansion of $\mathbf{S}(\zeta)$ has the form
\begin{align}\label{first super linear terms R5}
\mathbf{S}(\zeta)=\zeta\mathbf{S}_\zeta(0)+
\left(
\begin{array}{cc}
     \gamma_1 &0 \\
    0 & \gamma_2
\end{array}
\right)\zeta^\mu+O(\zeta^{\mu+1}),
\end{align} 
where $\gamma_1$ and $\gamma_2$ are both zero if and only if $\mathbf{S}(\zeta)=\zeta\mathbf{S}_\zeta(0)$ (if this is the case, then all elements of \eqref{CU quotient} are symmetries). We can order $|\gamma_1|\leq |\gamma_2|$ by applying the second of the aforementioned generators. 

The group action of \eqref{r5 T3 iso group} on $\mathbf{S}(\zeta)$ preserves the general form of \eqref{first super linear terms R5} while mapping $\gamma_1\mapsto e^{-2\theta_1+2i(\mu-1)\theta_2}\gamma_1$ and $\gamma_2\mapsto e^{2\theta_1+2i(\mu-1)\theta_2}\gamma_2$. Thus we can make $\gamma_1=1$ or simultaneously $\gamma_1=0$ and $\gamma_2=1.$ The situation with the remaining discrete subgroup of the identity component of \eqref{CU quotient} is the same as in Section \ref{row cases subsection a} (i.e., equivalent to \eqref{r126 discrete group} with $\theta_2$ in place of $\theta$), and one can similarly exhaust this freedom following the procedure referred to in Remark \ref{exhausting finite group}.

In the connected component of \eqref{CU quotient} corresponding to the first of the generators, we have an element mapping $\gamma_j\mapsto \gamma_j$, and $c_{j,\mu^\prime}\mapsto -e^{\pi i\frac{\mu^\prime-1}{\mu-1}}c_{j,\mu^\prime}$, which we use to normalize the next nonzero term with order $\mu^\prime>\mu$ according to Remark \ref{discrete normalization remark}. This exhausts the action of this connected component, leaving one final case to consider.

In the special case when $\gamma_1=\gamma_2=1$, we still have freedom to act by elements from the connected component of \eqref{CU quotient} of the second of the generators. Taking the first occurrence of $|c_{1,\mu^{\prime\prime}}|\neq |c_{2,\mu^{\prime\prime}}|$, we exhaust this freedom by ordering them such that $|c_{1,\mu^{\prime\prime}}|< |c_{2,\mu^{\prime\prime}}|$. If there is no such occurrence, then we have a discrete symmetry provided by the second representative above of the connected components in \eqref{CU quotient}.

\subsection{Expansions of Row 7 in Table 3}\label{row cases subsection e}

 Suppose that $(\mathbf{H},S^{0,2})$ is as in row $7$ of Table \ref{7-d. symbol table}. In this case, the identity component of \eqref{CU quotient} is comprised of matrices of the form
\begin{align}\label{r7 tr group}
\left(
\begin{array}{cc}
     e^{i\theta_1-\theta_2} & i\theta_3e^{i\theta_1+\theta_2} \\
     0 & e^{i\theta_1+\theta_2}
\end{array}
\right)
\quad\quad\forall\,\theta_j\in\mathbb{R}.
\end{align}
There is another connected components of \eqref{CU quotient} generated by element $(i,\left(\begin{smallmatrix}
    1& 0\\ 0&-1
\end{smallmatrix}\right)).$

Suppose the expansion of $\mathbf{S}(\zeta)$ has the form
\begin{align}\label{first super linear terms R7 case a}
\mathbf{S}(\zeta)=\zeta\mathbf{S}_\zeta(0)+
\left(
\begin{array}{cc}
     0 & \gamma_2 \\
    \gamma_2 & \gamma_1
\end{array}
\right)\zeta^\mu+O(\zeta^{\mu+1}),
\end{align} 
where $\gamma_1$ and $\gamma_2$ are both zero if and only if $\mathbf{S}(\zeta)=\zeta\mathbf{S}_\zeta(0)$ (if this is the case, then all elements of \eqref{CU quotient} are symmetries).

The group action of \eqref{r7 tr group} on $\mathbf{S}(\zeta)$ preserves the general form of \eqref{first super linear terms R7 case a} while transforming the $\gamma_j$ coefficients. It acts linearly on $\gamma_1,\gamma_2$ by the matrix
\[\left(
\begin{array}{cc}
     e^{2i\theta_1(\mu-1)-(2\mu+2)\theta_2} & 0 \\
    -i\theta_3e^{2i\theta_1(\mu-1)-(2\mu+2)\theta_2} & e^{2i\theta_1(\mu-1)-2\mu\theta_2}
\end{array}
\right).
\]
If $\gamma_1\neq 0$ then we can make $\gamma_1=1$ and $\gamma_2\in \mathbb{R}$ exhausting all the freedom in the action up to a finite subgroup. This subgroup is the same as in Section \ref{row cases subsection a} (i.e., equivalent to \eqref{r126 discrete group} with $\theta_2$ in place of $\theta$), and one can similarly exhaust this freedom following the procedure referred to in Remark \ref{exhausting finite group}.

In the another connected component of \eqref{CU quotient}, we have an element that can make $\gamma_2\geq 0$. In the special case $\gamma_2=0$, we have an element in the non-identity connected component of \eqref{CU quotient} mapping $\gamma_1\mapsto \gamma_1$, $c_{1,\mu^\prime}\mapsto -e^{\pi i\frac{\mu^\prime-1}{\mu-1}}c_{1,\mu^\prime}$, and $c_{2,\mu^\prime}\mapsto e^{\pi i\frac{\mu^\prime-1}{\mu-1}}c_{2,\mu^\prime}$, which we can use to finish the normalization according to Remark \ref{discrete normalization remark}

Otherwise $\gamma_1=0$, in which case we can make $\gamma_2=1$ and restrict to acting by the subgroup with $\theta_1=0$ ande $\theta_2=0.$ 
Proceeding in this latter case, it is advantageous to work in full generality and assume
\begin{align}\label{R7finalCase}
\mathbf{S}(\zeta)=\zeta\mathbf{S}_\zeta(0)+
\left(
\begin{array}{cc}
     0 & f_2(\zeta)\\
    f_2(\zeta) & f_1(\zeta)
\end{array}
\right),
\end{align} 
with $f_2(\zeta)=\zeta^\mu+O(\zeta^{\mu+1})$ and $f_1(\zeta)=\gamma_1^\prime \zeta^{\mu^\prime}+O(\zeta^{\mu^\prime+1})$ for some $\mu<\mu^\prime.$ The action of the subgroup of \eqref{r7 tr group} given by $\theta_1=\theta_2=0$ transforms \eqref{R7finalCase} to 
\begin{align}
\mathbf{S}(\zeta)=\zeta\mathbf{S}_\zeta(0)+
\left(
\begin{array}{cc}
     0 &  -f_1(g(\zeta,\theta_3))i\theta_3+f_2(g(\zeta,\theta_3))\\
    -f_1(g(\zeta,\theta_3))i\theta_3+f_2(g(\zeta,\theta_3)) & f_1(g(\zeta,\theta_3))
\end{array}
\right),
\end{align} 
where $g(\zeta,\theta_3)=\zeta+f_1(g(\zeta,\theta_3))\theta_3^2+2if_2(g(\zeta,\theta_3))\theta_3$ is the uniquely determined function referred to as $g_{1,U}$ in formula \eqref{symgrouptransform}, and we have simply rewritten the dependence on $(1,U)$ as dependence on $\theta_3$. We can see this provides a recurrence formula for real analytic $g(\zeta,\theta_3)$ and in particular, the absolute term in $\theta_3$ is just $\zeta$ and the term linear in $\theta_3$ is $2if_2(\zeta)\theta_3$. 

Thus if $\gamma_1^\prime\neq0$, then the term of $f_1(g(\zeta,\theta_3))$ linear in $\theta_3$ and of lowest order in $\zeta$ is $(\gamma_1^{\prime\prime}+2i\mu^\prime\gamma_1^\prime\theta_3)\zeta^{\mu^\prime+\mu-1}$, where $\gamma_1^{\prime\prime}\zeta^{\mu^\prime+\mu-1}$ is the term from the expansion of $f_1(\zeta).$ Therefore, we can normalize $\gamma_1^{\prime\prime}/\gamma_1^{\prime}\in \mathbb{R}$. In the non-identity connected component of \eqref{CU quotient}, we have an element mapping $\gamma_2\mapsto \gamma_2,\gamma_1^{\prime}\mapsto -\gamma_1^{\prime} $ and $\gamma_1^{\prime\prime}\mapsto -\gamma_1^{\prime\prime}$, which we can use to finish the normalization according to Remark \ref{discrete normalization remark}

If $\gamma_1^\prime=0$, then $f_1(\zeta)=0$ and the term of $f_2(g(\zeta,\theta_3))$ linear in $\theta_3$ and of lowest order in $\zeta$ is $(\gamma_2^{\prime\prime}+2i\mu\theta_3)\zeta^{2\mu-1}$, where $\gamma_2^{\prime\prime}\zeta^{2\mu-1}$ is a term from the expansion of $f_2(\zeta).$ Therefore, we can normalize $\gamma_2^{\prime\prime}\in \mathbb{R}$. The remaining freedom to normalize is in the action of a finite group, namely the group generated by an idempotent element in the non-identity connected component of \eqref{CU quotient} that maps $\gamma_2\mapsto \gamma_2$, $\gamma_2^{\prime\prime} \mapsto \gamma_2^{\prime\prime}$, $c_{1,\mu^{\prime\prime\prime}}\mapsto -c_{1,\mu^{\prime\prime\prime}}$ and $c_{2,\mu^{\prime\prime\prime}}\mapsto c_{2,\mu^{\prime\prime\prime}}$, which we can use to finish the normalization.

\section{Distinguished realizations of modified symbols}\label{Approaches to Hypersurface Realization}\label{7-dimensional realizations}

The normal forms from the previous section provide simple realizations of the modified symbols. We compare these simple realizations with the distinguished $2$-nondegenerate models given by \eqref{new S2} realizing the modified symbols using the data from Table \ref{7-d. symbol table}, in detail. The latter class is especially interesting because finding the structures within it that have constant modified CR symbols turns out to give an alternate method for deriving the homogeneous structure classification in \cite{SykesHomogeneous} and produces the defining equations from Table \ref{table of defining equations}.

The $2$-nondegenerate models given by \eqref{new S2} depend on three distinguished holomorphic functions $f_1,f_2,f_3$ of one variable and the corresponding defining equation \eqref{gen def fun} becomes $\Re(w)=\frac{Q_1}{Q_2}$ for the following three cases depending on $\mathbf{H}$ from Table \ref{7-d. symbol table}.

In the case $
\mathbf{H}=\left(\begin{matrix}1&0\\0 &1\end{matrix}\right)
$, the $Q_j$ functions are
\begin{align}
    Q_1&=|z_1|^2(1-|f_2|^2-|f_3|^2)+|z_2|^2(1-|f_1|^2-|f_2|^2)+2\Re(z_1\overline{z_2}(\overline{f_2}f_3+\overline{f_1}f_2))+\\
    &+\Re(z_1^2(f_3\overline{(f_1f_3-f_2^2)}+\overline{f_1})+2z_1z_2(f_2\overline{(f_1f_3-f_2^2)}+\overline{f_2})+z_2^2(f_1\overline{(f_1f_3-f_2^2)}+\overline{f_3})), \\
    Q_2&=1-|f_1|^2-2|f_2|^2-|f_3|^2+|f_1f_3-f_2^2|^2.
\end{align}
In the case $\mathbf{H}=\left(\begin{matrix}1&0\\0 &-1\end{matrix}\right)$, the $Q_j$ functions are
\begin{align}
    Q_1&=|z_1|^2(1+|f_2|^2-|f_3|^2)+|z_2|^2(-1+|f_1|^2-|f_2|^2)+2\Re(z_1\overline{z_2}(\overline{f_2}f_3-\overline{f_1}f_2))+\\
    &+\Re(z_1^2(-f_3\overline{(f_1f_3-f_2^2)}+\overline{f_1})+2z_1z_2(f_2\overline{(f_1f_3-f_2^2)}-\overline{f_2})+z_2^2(-f_1\overline{(f_1f_3-f_2^2)}+\overline{f_3})), \\
    Q_2&=1-|f_1|^2+2|f_2|^2-|f_3|^2+|f_1f_3-f_2^2|^2
\end{align}
In the case $\mathbf{H}=\left(\begin{matrix}0&1\\1 &0\end{matrix}\right)$, the $Q_j$ functions are
\begin{align}    Q_1&=2|z_1|^2\Re(f_2\overline{f_3})+2|z_2|^2\Re(f_2\overline{f_1})+2\Re(z_1\overline{z_2}(1-\overline{f_1}f_3-|f_2|^2))+\\
    &+\Re(z_1^2(-f_3\overline{(f_1f_3-f_2^2)}+\overline{f_3})+2z_1z_2(f_2\overline{(f_1f_3-f_2^2)}+\overline{f_2})+z_2^2(-f_1\overline{(f_1f_3-f_2^2)}+\overline{f_1})), \\
    Q_2&=1-2\Re(f_1\overline{f_3})-2|f_2|^2+|f_1f_3-f_2^2|^2.
\end{align}
When considering the distinguished functions $f_j$, these do not provide defining equations in normal form, in general. Therefore, we apply biholomorphic transformations to $\zeta$ to simplify \eqref{new S2}, but bring it to normal form only if this really simplifies the defining equation. For each of these $2$-nondegenerate models, we use the Section \ref{Symbols of CR hypersurfaces} formulas to compute their bigraded symbols and modified symbols and we determine if they have constant modified symbols with respect to the normalization conditions in Table \ref{normalization conditions Table}. We omit recording most formulas from these computations because they are obtrusively long, and instead report just the results.  The computations are, however, produced within this article's supplementary Maple files \cite{anc_files} in full detail.

\subsection{Formulas of Table 3, row 1}
For modified symbols of row 1 in Table \ref{7-d. symbol table}, we use the defining equations corresponding to $\mathbf{H}=\left(\begin{matrix}1&0\\0 &\epsilon\end{matrix}\right)$ with $\epsilon=\pm1$. For $\tau>0$ in these cases, computing \eqref{new S2} with the simplifying transformation $\zeta\mapsto\frac{\zeta}{\tau\sqrt{\lambda}}$ yields
\[
f_1(\zeta)=\frac{\left(\lambda^2+1\right)\zeta-\left(\lambda^2-1\right)\cos(\zeta)\sin(\zeta)}{2\tau\sqrt{\lambda}},
\]
\[
f_2(\zeta)=\frac{\sin(\zeta)^2\left(\lambda^2-1\right)}{2\tau\lambda}
\]
and
\[
f_3(\zeta)=\frac{\left(\lambda^2+1\right)\zeta+\left(\lambda^2-1\right)\cos(\zeta)\sin(\zeta)}{2\tau\sqrt{\lambda^3}}.
\]
Clearly, a transformation bringing the above to a normal form would involve a local inverse of $f_1$ and would not simplify the defining equation.

For $\tau=0$, computing \eqref{new S2} directly (i.e., without transforming $\zeta$) yields the normal form with
\begin{align}\label{nonregular family fj values}
f_{1}(\zeta)=\zeta,\quad f_2(\zeta)=0\quad\mbox{ and }
\quad f_3(\zeta)=\lambda \zeta.
\end{align}
For all $\lambda>1$ and $\tau\geq 0$ these structures turn out to have non-constant bigraded symbols.

\subsection{Formulas of Table 3, row 2}
For modified symbols of row 2 in Table \ref{7-d. symbol table}, we use the defining equations corresponding to $\mathbf{H}=\left(\begin{matrix}0&1\\1 &0\end{matrix}\right)$. For $\tau>0$ in these cases, computing \eqref{new S2} with the simplifying transformation $\zeta\mapsto\frac{\zeta}{\tau e^{i\theta/2}}$ yields
\[
f_1(\zeta)=\frac{e^{-i\frac{\theta}{2}}}{2\tau}\left(\zeta(1+e^{2i\theta})+\sin(\zeta)\cos(\zeta)(1-e^{2i\theta})\right),
\quad
f_2(\zeta)=\frac{-i}{\tau}\sin(\theta)\sin(\zeta)^2
\]
and
\[
f_3(\zeta)=\frac{e^{i\frac{\theta}{2}}}{2\tau}\left(\zeta(1+e^{-2i\theta})+\sin(\zeta)\cos(\zeta)(1-e^{-2i\theta})\right).
\]
Again a transformation bringing the above to a normal form would involve a local inverse of $f_1$ and would not simplify the defining equation.

By calculating the matrices in \eqref{key example Xi mat}, which represent the bigraded symbol, one finds readily that this symbol is not constant if either $\theta\neq\frac{\pi}{2}$ or $\tau\neq\frac{1}{\sqrt{2}}$. The parameter settings $\theta=\frac{\pi}{2}$ and $\tau=\frac{1}{\sqrt{2}}$ turn out to be  the type \Rmnum{1} homogeneous model from Table \ref{table of defining equations}.

Hence the above choices for $f_1(\zeta)$, $f_2(\zeta)$, and $f_3(\zeta)$ with $\theta=\frac{\pi}{2}$ and $\tau=\frac{1}{\sqrt{2}}$ define a hypersurface equivalent to the type \Rmnum{1} model of Table \ref{table of defining equations}. There is however a preferable choice of coordinates in $\zeta$, namely we apply $\zeta\mapsto \frac{\ln{(2\zeta+1)}}{1-i}$ instead of the aforementioned transformation, as this change of coordinates achieves a rational defining equation. The resulting $f_j$ functions are
\[
f_1(\zeta)=\frac{(1+i)(\zeta+1)\zeta}{2\zeta+1},
\quad
f_2(\zeta)=\frac{\sqrt{2}i\zeta^2}{2\zeta+1},
\quad\mbox{ and }\quad
f_3(\zeta)=\frac{(i-1)(\zeta+1)\zeta}{2\zeta+1}.
\]
Let us emphasize that although bringing the above to a normal form nicely simplifies $f_1(\zeta)$ and $f_3(\zeta)$, transforming them to $\zeta$ and $i\zeta$, the transformed $f_2(\zeta)$ is much more complicated (and not even rational), so we find the displayed $f_j$ values above simpler. Using these simpler $f_j$ values in \eqref{gen def fun} produces exactly  \eqref{def eqn type 1 b}.

Lastly, for $\tau=0$, computing \eqref{new S2} directly (i.e., without transforming $\zeta$) yields the normal form with
\[
f_1(\zeta)=\zeta ,\quad
f_2(\zeta)=0,
\quad\mbox{ and }\quad
f_3(\zeta)=\zeta e^{i\theta}.
\]

For all of these structures, except for the type \Rmnum{1} model produced by $\tau=\frac{1}{\sqrt{2}}$ and $\theta=\frac{\pi}{2}$, their bigraded symbols are not constant.

\subsection{Formulas of Table 3, row 3}
Row 3 of Table \ref{7-d. symbol table} gives the modified symbols of the type \Rmnum{5}.A and \Rmnum{6} model structures from Table \ref{table of defining equations}, and we derive their defining equation formulas corresponding to $\mathbf{H}=\left(\begin{matrix}1&0\\0 &\epsilon\end{matrix}\right)$. Computing \eqref{new S2} for these yields the normal form with
\begin{align}\label{off-diag fjk values}
f_{1}(\zeta)=0,\quad f_2(\zeta)=\zeta\quad\mbox{ and }
\quad f_3(\zeta)=0.
\end{align}

\subsection{Formulas of Table 3, row 4}

For modified symbols of row 4 in Table \ref{7-d. symbol table}, we use the defining equations corresponding to $\mathbf{H}=\left(\begin{matrix}1&0\\0 &\epsilon\end{matrix}\right)$ with $\epsilon=\pm1$. Computing \eqref{new S2} directly yields the normal form with
\[
f_1=\zeta
,\quad 
f_2=\frac{\zeta^2\tau}{2},
\quad\mbox{ and }\quad
f_3=\frac{\zeta^3\tau^2}{3}.
\]
Let us note that this is special instance of the normal form in case (4) with $\mu=2$ from Proposition \ref{table4prop1}.

When $\tau=0$, these formulas yield the type \Rmnum{4}.A and \Rmnum{4}.B defining equations in Table \ref{table of defining equations}. In the mixed signature case (i.e., $\epsilon=-1$), if $\tau=\frac{\sqrt{3}}{2}$ then these formulas yield a structure equivalent to the Type \Rmnum{3} model in Table \ref{table of defining equations}, but to obtain \eqref{def eqn type 3} exactly, one has to first rescale these formulas by $\zeta\mapsto 2\zeta$, which brings it out of the normal form, but makes a nicer defining equation.

For all other values of $\tau$ (including the pseudoconvex case with $\tau=\frac{\sqrt{3}}{2}$), these structures have constant bigraded symbols but non-constant modified symbols, which can be directly computed using the formulas of Section \ref{Symbols of CR hypersurfaces}.

\subsection{Formulas of Table 3, row 5}
Row 5 of Table \ref{7-d. symbol table} gives the modified symbol of the type \Rmnum{5}.B model structure from Table \eqref{table of defining equations}, and we derive its defining equation using the general formula corresponding to $\mathbf{H}=\left(\begin{matrix}0&1\\1 &0\end{matrix}\right)$. Computing \eqref{new S2} yields a normal form again with the formulas in \eqref{off-diag fjk values}.

\subsection{Formulas of Table 3, row 6}
For modified symbols of row 6 in Table \ref{7-d. symbol table}, we use the defining equations corresponding to $\mathbf{H}=\left(\begin{matrix}0&1\\1 &0\end{matrix}\right)$. For $\tau>0$ in these cases, computing \eqref{new S2} with the simplifying transformation $\zeta\mapsto\frac{\ln(\zeta+1)}{2\tau}$ yields
\[
f_{1}(\zeta)=\frac{\zeta(1+\zeta)}{2\tau},
\quad
f_2(\zeta)=\frac{\zeta}{2\tau},
\quad\mbox{ and }\quad
f_3(\zeta)=0.
\]
Again a transformation bringing the above to a normal form would involve a local inverse of $f_1$ and would not simplify the defining equation.

When $\tau=\frac{1}{2}$, these formulas yield the exactly type \Rmnum{2} defining equation in Table \ref{table of defining equations}. Again the normal form of this homogeneous model differs from the simple realization of this modified symbol. 

For $\tau=0$, computing \eqref{new S2} directly (i.e., without transforming $\zeta$) yields a normal form with
\[
 f_1(\zeta)=f_2(\zeta)=\zeta \quad\mbox{ and }\quad f_3(\zeta)=0.
\]

 For all $\tau\neq\frac{1}{2}$, these structures have constant bigraded symbols but non-constant modified symbols, which can be directly computed using the formulas of Section \ref{Symbols of CR hypersurfaces}.

\subsection{Formulas of Table 3, row 7}
Row 7 of Table \ref{7-d. symbol table} gives the modified symbol of the type \Rmnum{7} model structure from Table \eqref{table of defining equations}, and we derive its defining equation using the general formula corresponding to $\mathbf{H}=\left(\begin{matrix}0&1\\1 &0\end{matrix}\right)$. Computing \eqref{new S2} directly yields a normal form with
\[
f_1(\zeta)=\zeta \quad\mbox{ and }\quad f_2(\zeta)=f_3(\zeta)=0.
\]
The resulting defining equation is the concise formula $\Re(w)=\Re(z_1\overline{z_2}+z_2^2\overline{\zeta})$. It is the only known polynomial defining equation for a $2$-nondegenerate homogeneous model in this dimension.

We can summarize the results in this Section in the following result.

\begin{proposition}\label{all hom models prop}
    The Table \ref{table of defining equations} contains all (up to local CR equivalence) homogeneous $2$-nondegenerate models in $\mathbb{C}^4.$
\end{proposition}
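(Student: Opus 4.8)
The plan is to package the case-by-case computations of Sections 7.1--7.7 into a completeness statement by combining them with two structural facts established earlier: every realizable modified symbol appears in Table \ref{7-d. symbol table} (Proposition \ref{degree 0 symmetry bound}), and, by \cite[Lemma 3.3]{GKS2024}, a homogeneous $2$-nondegenerate model is determined up to local equivalence by its modified symbol. The strategy is to show that the homogeneous models are exactly the distinguished realizations \eqref{new S2} whose modified symbol is constant along $M_0$, and then to read off from Sections 7.1--7.7 precisely which parameter values produce such constancy.

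The central step is the equivalence: a $2$-nondegenerate model is homogeneous if and only if it is equivalent to a distinguished realization \eqref{new S2} with modified symbol constant along $M_0$. For the forward direction, homogeneity forces every local invariant to be constant, so the modified symbol is constant; by \cite[Lemma 3.3]{GKS2024} a homogeneous model is determined by its modified symbol, and the realization \eqref{new S2} -- which is constant up to first order by \eqref{S2 formula} and carries the maximal degree-zero isotropy (Corollary \ref{C4 degree zero bound}) -- is the canonical such realization, so the homogeneous model coincides with the \eqref{new S2} built from its symbol. For the converse I would use the transversality recorded in \cite[Theorem 1.2]{GKS2024}: for a model the translations spanning $\mathbb{C}\mathfrak{g}_-(p)$ already move $(z_1,z_2,\Im(w))$ freely and hence act transitively on the space of Levi-kernel leaves, reducing homogeneity to transitivity along a single leaf, the $\zeta$-axis $\{\Im(w)=0,\,z_1=z_2=0\}$. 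Constancy of the modified symbol along this axis -- expressed through the matrices \eqref{key example Xi mat} and \eqref{key example Omega mat} as functions of $\zeta$ -- is exactly the invariance needed to produce symmetries translating $\zeta$, upgrading the leafwise constancy to transitivity and hence to homogeneity.

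Granting this equivalence, I would proceed through the seven symbol types exactly as in Sections 7.1--7.7: substitute the computed functions $f_1,f_2,f_3$ into the appropriate $Q_1/Q_2$ formulas and test constancy of the bigraded and modified symbols in $\zeta$ using the Section \ref{Symbols of CR hypersurfaces} formulas. This isolates the homogeneous cases: $\theta=\tfrac{\pi}{2},\ \tau=\tfrac{1}{\sqrt 2}$ in row $2$ (Type I); the whole of rows $3$, $5$, and $7$ (Types V.A and VI, Type V.B, and Type VII); $\tau=0$ together with, in the mixed signature case, $\tau=\tfrac{\sqrt 3}{2}$ in row $4$ (Types IV.A and IV.B, and Type III); and $\tau=\tfrac12$ in row $6$ (Type II). For row $1$ and for every remaining value of the continuous parameters in rows $2$, $4$, and $6$, the bigraded or modified symbol is non-constant, so no homogeneous structure arises. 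The surviving cases are precisely the nine entries of Table \ref{table of defining equations}.

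To finish, an arbitrary homogeneous model has constant modified symbol, which by Proposition \ref{degree 0 symmetry bound} is one of the seven types, and by the central equivalence it is the distinguished realization of that symbol; the enumeration just carried out shows this realization is homogeneous only for the listed values, so every homogeneous model is among those in Table \ref{table of defining equations}. I expect the main obstacle to be the converse half of the central equivalence -- confirming that leafwise constancy of the modified symbol genuinely integrates to a $\zeta$-translation symmetry, rather than merely to constancy of invariants -- and, at the computational level, the constancy tests in rows $2$, $4$, and $6$, where the symbols depend nontrivially on the continuous parameters $\theta$ and $\tau$ and one must verify that only the isolated homogeneous values survive.
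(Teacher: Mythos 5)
Your route is genuinely different from the paper's. The paper's own proof is two sentences: it cites \cite{SykesHomogeneous} for the facts that the modified symbols admitting a homogeneous model are completely classified and that the homogeneous model for each such symbol is locally unique, so that exhibiting a defining equation for each of them in Section \ref{7-dimensional realizations} finishes the argument. You instead try to make the completeness self-contained by reducing it to the constancy computations for the realizations \eqref{new S2}. Your enumeration of the surviving parameter values is correct and does match the nine entries of Table \ref{table of defining equations}, but the logical scaffolding has a genuine gap in the forward direction of your ``central equivalence.'' From homogeneity of $M$ you correctly get that the modified symbol is constant (hence one of the entries of Table \ref{7-d. symbol table} by Proposition \ref{degree 0 symmetry bound}) and that the obstructions to first order constancy vanish; this pins down $\mathbf{H}$, $\mathbf{S}_\zeta(0)$, $\Omega$ and hence $\mathbf{S}_{\zeta\zeta}(0)$, i.e.\ the $2$-jet of $\mathbf{S}(\zeta)$, which agrees with \eqref{S2 formula}. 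But \cite[Lemma 3.3]{GKS2024} only says that two \emph{homogeneous} models with the same modified symbol are locally equivalent; to conclude that $M$ coincides with the realization \eqref{new S2} of its symbol you must already know that this realization is itself homogeneous. If for some symbol the realization \eqref{new S2} had non-constant modified symbol while some \emph{other} model carrying that symbol at a point were nevertheless homogeneous, your enumeration would silently miss it. Nothing in your argument excludes this scenario, and excluding it is exactly what the citation of \cite{SykesHomogeneous} --- which classifies the symbols admitting homogeneous models abstractly, independently of the particular realization \eqref{new S2} --- accomplishes in the paper.

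The converse direction, which you flag yourself, is also not free: constancy of the modified symbol along the $\zeta$-axis is the constancy of a single invariant and does not by itself integrate to a one-parameter group of symmetries translating $\zeta$; one needs either the constancy of a complete system of invariants, a prolongation argument exploiting the specific $\ln(\exp(\cdot)\exp(\cdot))$ form of \eqref{new S2} together with \cite[Proposition 4.3]{GKS2024}, or once again the uniqueness statement of \cite{SykesHomogeneous}. As written, the proposal therefore does not close without importing the external classification, at which point it reduces to the paper's (much shorter) argument. If you want a genuinely independent proof along your lines, the missing ingredient to supply is a statement of the form: a $2$-nondegenerate model with constant modified symbol and vanishing obstructions of all orders is determined by its modified symbol --- i.e.\ a higher-order analogue of \cite[Lemma 3.3]{GKS2024} that does not presuppose homogeneity.
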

\begin{proof}
  In \cite{SykesHomogeneous}, it is shown which modified symbols admit a homogeneous model and that the model for each one is locally unique. In this section we found the defining equation for each of these homogeneous models.
\end{proof}

\section{Infinitesimal symmetry algebras of homogeneous models}\label{Symmetry Algebras}\label{Infinitesimal symmetry algebras of homogeneous models}

For the homogeneous models in Table \ref{table of defining equations}, their algebras of infinitesimal symmetries can all be computed, and in this section we describe these infinitesimal symmetries for the homogeneous models of type \Rmnum{1}, \Rmnum{2} and \Rmnum{3}. Infinitesimal symmetry algebra descriptions for the other homogeneous model types appear across the literature cited in Table \ref{table of defining equations}.

\subsection{Type \Rmnum{1} model symmetries}
The $2$-nondegenerate model $M_0$ given by \eqref{def eqn type 1 b} has an eight dimensional $\mathbb{Z}$-graded infinitesimal symmetry algebra represented by holomorphic vector fields $\mathfrak{hol}(M_0)=\mathfrak{a}_{-2}\oplus\mathfrak{a}_{-1}\oplus \mathfrak{a}_{0}$, with graded components 
\[
\mathfrak{a}_{-2}=\left\{2bi\frac{\partial}{\partial w}\right\},
\]
\[
\mathfrak{a}_{-1}=\left\{
\parbox{\textwidth-3.5cm}{\flushleft$\displaystyle
2\left(\overline{a_1}z_2+\overline{a_2}z_1\right)\frac{\partial}{\partial w}+\left(a_1-\overline{a_1}\frac{\sqrt{2}i\zeta^2}{2\zeta+1}-\overline{a_2}\frac{(1+i)(\zeta+1)\zeta}{2\zeta+1}\right)\frac{\partial}{\partial z_1}+$\\
$\displaystyle +\left(a_2-\overline{a_1}\frac{(i-1)(\zeta+1)\zeta}{2\zeta+1}-\overline{a_2}\frac{\sqrt{2}i\zeta^2}{2\zeta+1}\right)\frac{\partial}{\partial z_2}$
}
\right\},
\]
and
\[
\mathfrak{a}_0=
\left\{
\parbox{\textwidth-3.5cm}{\flushleft$\displaystyle
\left(2cw+\overline{d}\left(-(1+i)z_1^2+(1-i)z_2^2\right)\right)\frac{\partial}{\partial w}+d\left(2\zeta+1\right)\frac{\partial}{\partial \zeta}+$\\
$\displaystyle
+\left(cz_1+de^{-i\frac{\pi}{4}}z_2+2\overline{d}\frac{-e^{i\frac{\pi}{4}}\left(\zeta^2+\zeta+\frac{1}{2}\right)z_2+i(\zeta^2+\zeta)z_1}{2\zeta+1}\right)\frac{\partial}{\partial z_1}+$\\
$\displaystyle +\left(cz_2+de^{i\frac{\pi}{4}}z_1-2\overline{d}\frac{e^{-i\frac{\pi}{4}}\left(\zeta^2+\zeta+\frac{1}{2}\right)z_1+i(\zeta^2+\zeta)z_2}{2\zeta+1}\right)\frac{\partial}{\partial z_2}$
}
\right\},
\]
for $b,c\in\mathbb{R}$ and $a_1,a_2,d\in\mathbb{C}$.
In \cite{beloshapka2004universal}, having defining equations with polynomial symmetries is among the listed properties for models considered there, so it is notable that these infinitesimal symmetries are not represented by polynomial vector fields. The Type \Rmnum{1} model is the only $2$-nondegenerate model in Table \ref{table of defining equations} for which this happens.

\subsection{Type \Rmnum{2} model symmetries}
The $2$-nondegenerate model $M_0$ given by \eqref{def eqn type 2 b} has an eight dimensional $\mathbb{Z}$-graded infinitesimal symmetry algebra represented by holomorphic vector fields $\mathfrak{hol}(M_0)=\mathfrak{a}_{-2}\oplus\mathfrak{a}_{-1}\oplus \mathfrak{a}_{0}$, with graded components 
\[
\mathfrak{a}_{-2}=\left\{2bi\frac{\partial}{\partial w}\right\},
\]
\[
\mathfrak{a}_{-1}=\left\{
\parbox{\textwidth-3.5cm}{\flushleft$\displaystyle
2\left(\overline{a_2}z_1+\overline{a_1}z_2\right)\frac{\partial}{\partial w}+\left(a_1-\overline{a_1}\zeta-\overline{a_2}\zeta-\overline{a_2}\zeta^2\right)\frac{\partial}{\partial z_1}+\left(a_2-\overline{a_2}\zeta\right)\frac{\partial}{\partial z_2}$
}
\right\},
\]
and
\[
\mathfrak{a}_0=
\left\{
\parbox{\textwidth-2cm}{\flushleft$\displaystyle
\left(2cw+2\overline{d}(z_2^2+2z_1z_2)\right)\frac{\partial}{\partial w}+2(d-\overline{d}\zeta)(1+\zeta)\frac{\partial}{\partial \zeta}+\left(cz_2-2\overline{d}z_2(1+\zeta)\right)\frac{\partial}{\partial z_2}+$\\
$\displaystyle +\left(cz_1+d(z_2+2z_1)-\overline{d}z_2-2\overline{d}\zeta(2z_2+z_1+z_2\zeta)\right)\frac{\partial}{\partial z_1}$
}
\right\},
\]
for $b,c\in\mathbb{R}$ and $a_1,a_2,d\in\mathbb{C}$.

\subsection{Type \Rmnum{3} model symmetries}
The $2$-nondegenerate model $M_0$ given by \eqref{def eqn type 3} has a nine dimensional $\mathbb{Z}$-graded infinitesimal symmetry algebra represented by holomorphic vector fields $\mathfrak{hol}(M_0)=\mathfrak{a}_{-2}\oplus\mathfrak{a}_{-1}\oplus \mathfrak{a}_{0}$, with graded components 
\[
\mathfrak{a}_{-2}=\left\{2bi\frac{\partial}{\partial w}\right\},
\]
\[
\mathfrak{a}_{-1}=\left\{ 2\left(\overline{a_1}z_1-\overline{a_2}z_2\right)\frac{\partial}{\partial w}+\left(a_1-2\overline{a_1}\zeta+\sqrt{3}\overline{a_2}\zeta^2\right)\frac{\partial}{\partial z_1}+\left(a_2-\sqrt{3}\overline{a_1}\zeta^2+2\overline{a_2}\zeta^3\right)\frac{\partial}{\partial z_2}
\right\},
\]
\[
\mathfrak{a}_{0}=\left\{
\parbox{13.1cm}{\flushleft$\displaystyle
\left(2\Re(c_2)w+2\overline{c_1}z_1^2\right)\frac{\partial}{\partial w}+\left(c_2z_1+\sqrt{3}\overline{c_1}z_2-4\overline{c_1}z_1\zeta\right)\frac{\partial}{\partial z_1}+$\\
$\displaystyle +\left(\sqrt{3}c_1z_1+c_2z_2+2i\Im(c_2)z_2-2\sqrt{3}\overline{c_1}z_1\zeta^2\right)\frac{\partial}{\partial z_2}+\left(c_1-\overline{c_1}\zeta^2+2i\Im(c_2)\zeta\right)\frac{\partial}{\partial \zeta}$
}
\right\}
\]
for $b\in\mathbb{R}$ and $a_1,a_2,c_1,c_2\in\mathbb{C}$. These infinitesimal symmetries comprise a well known algebra, namely one of the maximal parabolic subalgebras in the exceptional Lie algebra $\mathrm{Lie}(G_2)$ \cite{SykesHomogeneous}. Specifically, it is the Lie algebra of the subgroup labeled as the \emph{$G_2$ contact parabolic} in \cite[Section 2.3]{leistner2017new}.

 \newcommand{\noop}[1]{}

\end{document}